\pdfoutput=1
\RequirePackage{ifpdf}
\ifpdf 
\documentclass[pdftex]{sigma}
\else
\documentclass{sigma}
\fi

\numberwithin{equation}{section}

\newtheorem{Theorem}{Theorem}[section]
\newtheorem{Corollary}[Theorem]{Corollary}
\newtheorem{Lemma}[Theorem]{Lemma}
\newtheorem{Proposition}[Theorem]{Proposition}
 { \theoremstyle{definition}
\newtheorem{Definition}[Theorem]{Definition}
\newtheorem{Example}[Theorem]{Example} }

\begin{document}

\allowdisplaybreaks

\newcommand{\arXivNumber}{1509.00175}

\renewcommand{\PaperNumber}{061}

\FirstPageHeading

\ShortArticleName{Geometric Monodromy around the Tropical Limit}

\ArticleName{Geometric Monodromy around the Tropical Limit}

\Author{Yuto YAMAMOTO}

\AuthorNameForHeading{Y.~Yamamoto}

\Address{Graduate School of Mathematical Sciences, The University of Tokyo,\\ 3-8-1 Komaba, Meguro, Tokyo, 153-8914, Japan}
\Email{\href{mailto:yuto@ms.u-tokyo.ac.jp}{yuto@ms.u-tokyo.ac.jp}}

\ArticleDates{Received September 02, 2015, in f\/inal form June 17, 2016; Published online June 24, 2016}

\Abstract{Let $\{V_q\}_{q}$ be a complex one-parameter family of smooth hypersurfaces in a~toric variety. In this paper, we give a concrete description of the monodromy transformation of~$\{V_q\}_q$ around $q=\infty$ in terms of tropical geometry. The main tool is the tropical locali\-zation introduced by Mikhalkin.}

\Keywords{tropical geometry; monodromy}

\Classification{14T05; 14D05}

\section{Introduction}

Let $K := {\mathbb C}\{t\}$ be the convergent Laurent series f\/ield, equipped with the standard non-archi\-me\-dean valuation,
\begin{gather}\label{eq:val}
\operatorname{val} \colon \ K \longrightarrow {\mathbb Z} \cup \{ - \infty \},\qquad k=\sum_{j \in {\mathbb Z}}c_j t^j \mapsto - \min \{ j \in {\mathbb Z} \,|\, c_j \ne 0 \}.
\end{gather}
Let $n \in {\mathbb N}$ be a natural number and $M$ be a free ${\mathbb Z}$-module of rank $n+1$. We write $M_{\mathbb R}:=M \otimes_{\mathbb Z} {\mathbb R}$.
Let further $\Delta \subset M_{\mathbb R}$ be a convex lattice polytope, i.e., the convex hull of a f\/inite subset of~$M$. We set $A:= \Delta \cap M$.
Let $F=\sum\limits_{m \in A} k_m x^m \in K\big[x^\pm_{1},\dots,x^\pm_{n+1}\big]$ be a~Laurent polynomial over $K$ in $n+1$ variables such that $k_m \neq 0$ for all $m \in A$. We f\/ix a suf\/f\/iciently large $R \in {\mathbb R}^{>0}$ such that $1/R$ is smaller than the radius of convergence of $k_m$ for all $m \in A$, and set $S_R^1 := \{ z \in {\mathbb C} \,|\, |z|=R \}$. For $q \in S_R^1$, let $f_q \in {\mathbb C}[x^\pm_{1},\dots,x^\pm_{n+1}]$ denote the polynomial obtained by substituting $1/q$ to $t$ in~$F$. Let ${\mathcal F}$ be the normal fan to $\Delta$ and ${\mathcal F}'$ be a unimodular subdivision of ${\mathcal F}$. Let $X_{{\mathcal F}'}({\mathbb C})$ denote the toric manifold over ${\mathbb C}$ associated with ${\mathcal F}'$. For each $q \in S_R^1$, we def\/ine $V_q \subset X_{{\mathcal F}'}({\mathbb C})$ as the hypersurface def\/ined by~$f_q$ in $X_{{\mathcal F}'}({\mathbb C})$. In this paper, we discuss the monodromy transformation of $\{V_q\}_{q \in S_R^1}$ around $q=\infty$. The limit $q \to \infty$ is called the tropical limit in this paper. The motivation to address this problem comes from the calculation of monodromies of period maps.

Let $\operatorname{trop}(F) \colon {\mathbb R}^{n+1} \to {\mathbb R}$ be the tropicalization of $F$ def\/ined by
\begin{gather}\label{eq:trp}
\operatorname{trop}(F)(X_1, \dots , X_{n+1}):=\max_{m \in A} \big\{{\operatorname{val}}(k_m)+m_1X_1+ \cdots +m_{n+1}X_{n+1} \big\}.
\end{gather}
The non-dif\/ferentiable locus of $\operatorname{trop}(F)$ is called the tropical hypersurface def\/ined by $\operatorname{trop}(F)$
and denoted by $V(\operatorname{trop}(F))$. The tropical hypersurface $V(\operatorname{trop}(F))$ is a rational polyhedral complex of dimension $n$.
The main theorem of this paper is Theorem~\ref{th:main}, which gives a concrete description of the monodromy transformation of $\{V_q\}_{q \in S_R^1}$ in terms of the tropical hypersur\-fa\-ce~$V(\operatorname{trop}(F))$ in the case where~$V(\operatorname{trop}(F))$ is smooth (see Def\/inition~\ref{df:hypsm}). The monodromy of~$\{V_q\}_{q \in S_R^1}$ is also discussed in \cite[Appendix~B.2]{DKK} and Theorem~\ref{th:main} is covered by \cite[Proposition~B.17]{DKK}. However, this paper aims to make the relation of the monodromy of $\{V_q\}_{q \in S_R^1}$ to tropical geometry clear. We give a self-contained proof and explicit examples.

When $\Delta$ is smooth and ref\/lexive and the polynomial $F$ gives a central subdivision of $\Delta$, Zharkov~\cite{MR1738179} also gave a concrete description of the monodromy transformation of $\{V_q\}_{q \in S_R^1}$. The idea of his description is the same as that of ours. By treating his construction systematically, we generalize his result to the case where~$\Delta$ is any polytope and the subdivision of~$\Delta$ given by~$F$ is not necessarily central.

Since the claim of Theorem~\ref{th:main} is technical and it is necessary to make preparations in order to state it, we do not state it here and discuss its corollary in the following. Assume $n=1$. Let $\{\rho_i \}_{i \in \{1, \dots, d\}}$ be the set of all bounded edges of $V(\operatorname{trop}(F))$.
For each $\rho_i$, let $\nu_{i1}, \nu_{i2} \in {\mathbb R}^{n+1}$ be the endpoints of~$\rho_i$. Let further $V \in {\mathbb Z}^{n+1}$ be the primitive vector such that $\nu_{i1}-\nu_{i2}=lV$ for some $l \in {\mathbb R}^{>0}$. We def\/ine the length $L(\rho_i)$ of $\rho_i$ as $l \in {\mathbb R}^{>0}$. Assume that the tropical hypersur\-face~$V(\operatorname{trop}(F))$ is \textit{smooth}, in the sense that for any vertex $\nu$ of $V(\operatorname{trop}(F))$, there exists a~${\mathbb Z}$-af\/f\/ine transformation $\big((m_{ij})_{1 \leq i,j \leq 2}, (r_i)_{i=1,2} \big) \in {\rm GL}_2({\mathbb Z}) \ltimes {\mathbb R}^2$ such that in the coordinate $(Y_1, Y_2)$ on~${\mathbb R}^2$ def\/ined by
\begin{gather*}
Y_1=m_{11}X_1+m_{12}X_2+r_1, \qquad Y_2=m_{21}X_1+m_{22}X_2+r_2,
\end{gather*}
the tropical hypersurface $V(\operatorname{trop}(F))$ coincides locally with the tropical hyperplane def\/ined by $\max \{0, Y_1, Y_2 \}$ around $\nu$.
Then we have $\nu_{i1}, \nu_{i2} \in {\mathbb Z}^{n+1}$. The amoeba of $V_q$ converges to the tropical hypersurface $V(\operatorname{trop}(F))$ as $q \to \infty$ in the Hausdorf\/f metric~\cite{MR2079993, ISSN:1401-5617} and the hypersurface~$V_q$ is obtained by `thickening' the amoeba of~$V_q$. Let $C_i$ $(i=1, \dots, d)$ be the simple closed curve in $V_{q=R}$ turning around~$\rho_i$ (see Fig.~\ref{fg:intro} for an example).
Let further $T_{i} \colon V_R \to V_R$ be the Dehn twist along $C_i$.

\begin{Corollary}\label{cr:iwa} If $n=1$ and $V(\operatorname{trop}(F))$ is smooth, then the monodromy transformation of $\{V_q\}_{q \in S_R^1}$ around $q=\infty$ is given by $T_{1}^{L(\rho_1)} \circ \cdots \circ T_{d}^{L(\rho_d)}$.
\end{Corollary}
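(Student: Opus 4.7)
My plan is to use Mikhalkin's tropical-localization technique to decompose $V_q$ (for $|q|$ large) into standard topological pieces indexed by the cells of $V(\operatorname{trop}(F))$, and then compute the monodromy piece by piece. Because $V(\operatorname{trop}(F))$ is smooth, every vertex $\nu$ is trivalent and locally modelled on the standard tropical line $\max\{0, Y_1, Y_2\}$, so the piece of $V_q$ over a small neighbourhood of $\nu$ is a pair of pants; the piece $V_q^{\rho_i}$ over a bounded edge $\rho_i$ is an annulus whose core circle is isotopic to $C_i$; and the piece over an unbounded edge is an annulus capped off by a disk in the toric boundary of $X_{{\mathcal F}'}({\mathbb C})$.

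The next step is to compute the monodromy separately on each type of piece. Over each vertex $\nu$ and over each unbounded edge, a monomial change of coordinates with $q$-dependent coefficients converts the local restriction of $f_q$ into the $q$-independent trinomial $1 + u + v$; since the pair of pants cut out by this equation is independent of $q$, the monodromy acts on such pieces by an isotopy. The non-trivial part of the monodromy is therefore concentrated on the annuli $V_q^{\rho_i}$, on each of which the dominant contribution to $f_q$ is the binomial $k_{m_{i,1}}(1/q)\,x^{m_{i,1}} + k_{m_{i,2}}(1/q)\,x^{m_{i,2}}$, where $m_{i,1},m_{i,2}\in A$ index the edge of the subdivision of $\Delta$ dual to $\rho_i$.

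On each such cylinder I would extend the monodromy-trivializations coming from the two adjacent vertices $\nu_{i,1}$, $\nu_{i,2}$ and compare them. Parametrizing $({\mathbb C}^*)^2$ by moduli and arguments, the binomial expresses one angular coordinate in terms of the other and of $\arg\bigl(-k_{m_{i,2}}(1/q)/k_{m_{i,1}}(1/q)\bigr)$; as $q$ winds once positively around infinity, this phase advances by $2\pi\bigl(\operatorname{val}(k_{m_{i,2}})-\operatorname{val}(k_{m_{i,1}})\bigr)$ according to~\eqref{eq:val}. Combined with the $q$-dependent monomial changes of variables at $\nu_{i,1}$ and $\nu_{i,2}$, this translates into a relative rotation of the two vertex-trivializations by exactly $L(\rho_i)$ full turns around the core circle $C_i$, that is, into $T_i^{L(\rho_i)}$. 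Because the $C_i$ lie in pairwise disjoint annuli and the vertex and unbounded-edge pieces contribute only isotopies, the $T_i$ commute and glue to the global monodromy $T_1^{L(\rho_1)} \circ \cdots \circ T_d^{L(\rho_d)}$.

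The main obstacle I foresee is the bookkeeping in the third step. The raw phase-winding on the cylinder, $\operatorname{val}(k_{m_{i,2}})-\operatorname{val}(k_{m_{i,1}})$, differs in general from the lattice length $L(\rho_i)$ by contributions coming from the two vertex-trivializations, and only after correctly projecting these contributions onto the free angular direction of the cylinder does one recover $L(\rho_i)$ on the nose. One must also verify that the subdominant monomials on each cylinder perturb the binomial picture only by an isotopy, so that the collars gluing the annuli to the pairs-of-pants introduce no extra twisting.
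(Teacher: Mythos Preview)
Your approach is essentially the paper's: tropical localization decomposes $W_q\cong V_q$ into pieces indexed by cells of $V(\operatorname{trop}(F))$, vertex pieces are pairs of pants on which the monodromy is the identity, and each bounded-edge cylinder contributes $L(\rho_i)$ Dehn twists. The paper, however, packages the edge computation through Theorem~\ref{th:main} in a way that dissolves the bookkeeping you flag as the main obstacle. Rather than compare two vertex-trivializations across the cylinder and then correct the raw phase $\operatorname{val}(k_{m_{i,2}})-\operatorname{val}(k_{m_{i,1}})$, it fixes once and for all a retraction $\phi\colon\operatorname{Log}_R(W_R)\to V(\operatorname{trop}(F))$ sending each $\widehat D_\mu$ onto $\mu$, and writes the monodromy as $\tilde x_j\mapsto\exp\bigl(2\pi\sqrt{-1}\,\widetilde X_j\circ\phi\circ\operatorname{Log}_R\bigr)\,\tilde x_j$. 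In standard coordinates adapted to a bounded edge $\mu$ with endpoints $\nu_1,\nu_2$ one has $\widetilde X_1\equiv 0$ on $\mu$ while $\widetilde X_2(\nu_1)=0$ and $\widetilde X_2(\nu_2)=-L(\mu)$ by the very definition of lattice length; the Dehn-twist count $L(\mu)$ then falls out immediately, with no projection or correction needed. Your route will give the same answer but requires exactly the extra work you anticipate.

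One small correction: over an unbounded edge the dominant part of $f_q$ is a binomial, not the trinomial $1+u+v$. Your conclusion there is still right, but for the reason you already gave in your first paragraph---the cylinder caps off to a disk in the toric boundary, and a Dehn twist along a curve bounding a disk is isotopically trivial. (In the paper's language, the restriction of $\psi$ to an unbounded-edge piece is a composition of ``infinitely many'' Dehn twists, which is trivial in the mapping class group for exactly this reason.)
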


Corollary~\ref{cr:iwa} is conjectured by Iwao \cite{IwaoLecture2010}.
Let us illustrate this claim with a simple example.
Consider the polynomial $F$ given by
\begin{gather}\label{eq:hypellip}
F(x_1,x_2)=x_2^2+x_2 \big( x_1^3 + t^{-2} x_1^2 +t^{-2} x_1 + t^{-1} \big)+1.
\end{gather}
Then we have
\begin{gather*}
f_q(x_1,x_2) =x_2^2+x_2 \big( x_1^3 + q^{2} x_1^2 +q^{2} x_1 + q^{1} \big)+1, \\
\operatorname{trop}(F)(X_1,X_2) =\max \{ 2X_2 , 3X_1+X_2 , 2X_1+X_2+2 , X_1+X_2+2, X_2+1, 0 \}.
\end{gather*}
The tropical hypersurface $V(\operatorname{trop}(F))$ and the hypersurface $V_q$ in this case are shown in Fig.~\ref{fg:intro}. Let~$\rho_i$ and~$C_i$ $(i=1,\dots,7)$ denote edges of $V(\operatorname{trop}(F))$ and simple closed curves in $V_q$ as shown in Fig.~\ref{fg:intro}. Then the edges $\rho_1, \dots , \rho_7$ correspond to the simple closed curves $C_1, \dots, C_7$, respectively. By simple calculations, we have
\begin{gather*}
L(\rho_1)=2, \qquad\! L(\rho_2)=4, \qquad\! L(\rho_3)=12,\qquad\! L(\rho_4)=L(\rho_5)=1, \qquad\! L(\rho_6)=L(\rho_7)=2.
\end{gather*}
It follows from Corollary~\ref{cr:iwa} that the monodromy transformation of $\{V_q\}_{q \in S_R^1}$ is given by $T_1^2 \circ T_2^4 \circ T_3^{12} \circ T_4 \circ T_5 \circ T_6^2 \circ T_7^2$.

\begin{figure}[t]\centering
\includegraphics[scale=0.45]{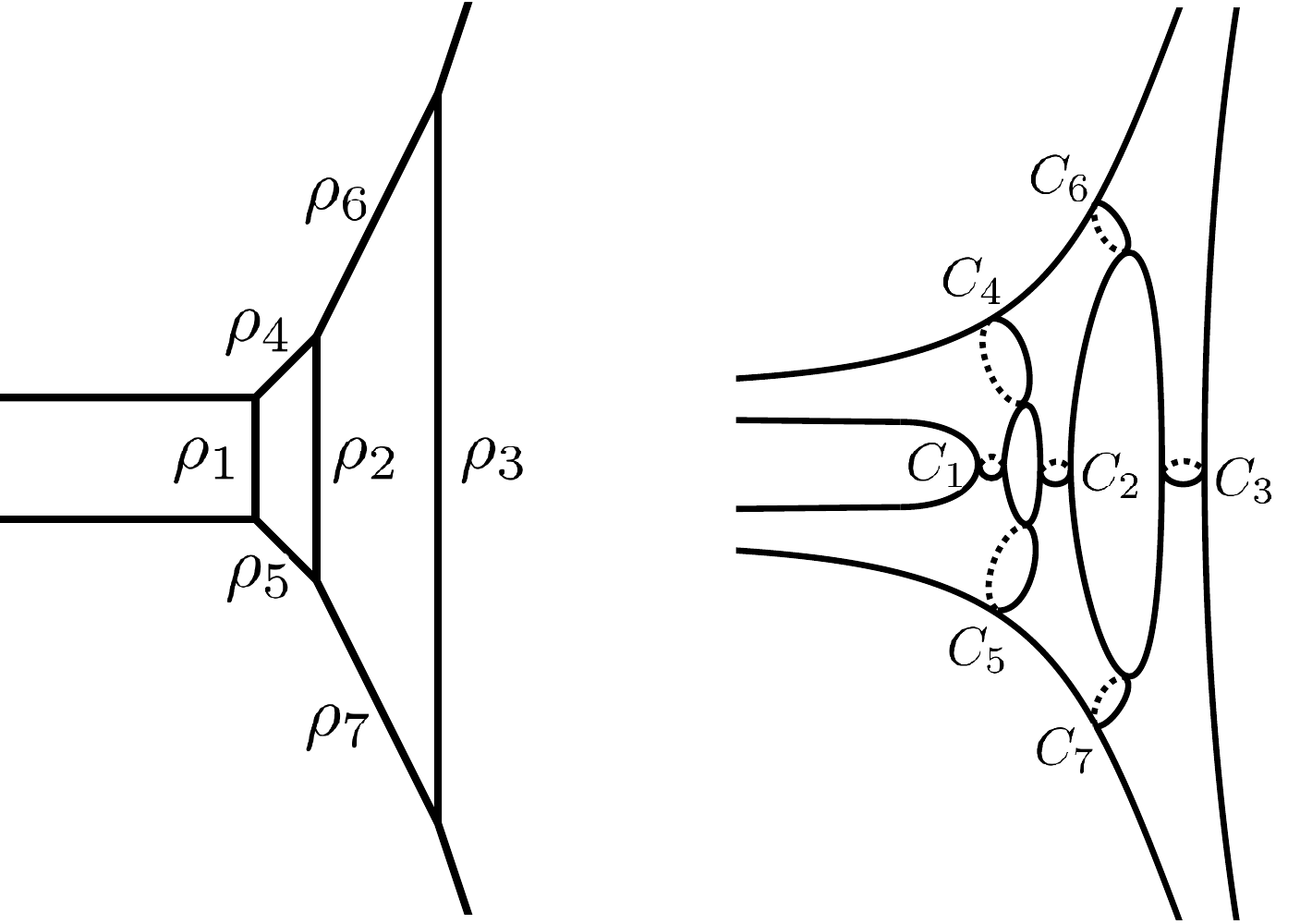}
\caption{The tropical hypersurface $V(\operatorname{trop}(F))$ and the hypersurface $V_q$ for~\eqref{eq:hypellip}.}\label{fg:intro}
\end{figure}

The organization of this paper is as follows: First, we set up the notation in Section~\ref{sc:2}. In Section~\ref{sc:3}, we recall the notion of the tropical localization introduced by Mikhalkin~\cite{MR2079993}. This is the main tool to construct the monodromy transformation of $\{V_q\}_{q \in S_R^1}$. In Section~\ref{sc:4}, we give an explicit description of the monodromy transformations in any dimension. In Section~\ref{sc:5}, we show that Corollary~\ref{cr:iwa} follows from Theorem~\ref{th:main}. In Section~\ref{sc:6}, we give examples in dimension~$1$ and~$2$. In Section~\ref{sc:7}, we discuss the relation between Zharkov's description and ours. This section may also be useful for understanding this paper and a possible f\/irst step for getting our idea.

\section{Preliminaries}\label{sc:2}

\subsection{Tropical toric varieties}

Let $M$ be a free ${\mathbb Z}$-module of rank $n+1$ and $N:=\operatorname{Hom}_{\mathbb Z} (M,{\mathbb Z})$ be the dual lattice of $M$.
We set $M_{\mathbb R}:=M \otimes_{\mathbb Z} {\mathbb R}$ and $N_{\mathbb R}:=N \otimes_{\mathbb Z} {\mathbb R}=\operatorname{Hom}_{\mathbb Z} (M,{\mathbb R})$. We have a canonical ${\mathbb R}$-bilinear pairing
\begin{gather*}
\langle - ,- \rangle \colon \ M_{\mathbb R} \times N_{\mathbb R} \to {\mathbb R}.
\end{gather*}
Let ${\mathcal F}$ be a fan in $N_{\mathbb R}$. We write the toric variety associated with ${\mathcal F}$ over ${\mathbb C}$ as $X_{\mathcal F}({\mathbb C})$.
For each cone $\sigma \in {\mathcal F}$, we set
\begin{gather*}
\sigma^\vee := \{ m \in M_{\mathbb R} \,|\, \langle m,n \rangle \geq 0 \mathrm{\ for\ all\ } n \in \sigma \},\\
\sigma^\perp := \{ m \in M_{\mathbb R} \,|\, \langle m,n \rangle =0 \mathrm{\ for\ all\ } n \in \sigma \}.
\end{gather*}
Let $U_\sigma({\mathbb C}):=\operatorname{Hom}(\sigma^\vee \cap M, {\mathbb C})$ denote the af\/f\/ine toric variety and $O_\sigma({\mathbb C}):=\operatorname{Hom} ( \sigma^{\perp} \cap M, {\mathbb C}^\ast )$ denote the torus orbit corresponding to~$\sigma$. We write the closure of $O_\sigma({\mathbb C})$ in $X_{\mathcal F}({\mathbb C})$ as $X_{{\mathcal F},\sigma}({\mathbb C})$.

Let ${\mathbb T}:={\mathbb R} \cup \{ -\infty \}$ be the tropical semi-ring, equipped with the following arithmetic operations for any $a, b \in {\mathbb T}$;
\begin{gather*}
a \oplus b :=\max \{ a, b \}, \qquad a \odot b :=a+b.
\end{gather*}
We can also def\/ine the toric variety over ${\mathbb T}$ as follows. For each cone $\sigma \in {\mathcal F}$, we def\/ine $U_\sigma({\mathbb T})$ as the set of monoid homomorphisms $\sigma^\vee \cap M \to ({\mathbb T}, \odot)$,
\begin{gather*}
U_\sigma({\mathbb T}):=\operatorname{Hom}\big(\sigma^\vee \cap M, {\mathbb T}\big)
\end{gather*}
with the compact open topology. For cones $\sigma, \tau \in {\mathcal F}$ such that $\sigma \prec \tau$, we have a natural immersion,
\begin{gather*}
U_\sigma({\mathbb T}) \to U_\tau({\mathbb T}),\qquad \big(v \colon \sigma^\vee \cap M \to {\mathbb T}\big) \mapsto \big(\tau^\vee \cap M \subset \sigma^\vee \cap M \xrightarrow{v} {\mathbb T}\big),
\end{gather*}
where $\sigma \prec \tau$ means that $\sigma$ is a face of $\tau$. By gluing $\{ U_\sigma({\mathbb T}) \}_{\sigma \in {\mathcal F}}$ with each other, we have the tropical toric variety $X_{\mathcal F}({\mathbb T})$ associated with~${\mathcal F}$,
\begin{gather*}
X_{\mathcal F}({\mathbb T}):=\bigg( \coprod_{\sigma \in {\mathcal F}} U_\sigma({\mathbb T}) \bigg) \bigg/ {\sim}.
\end{gather*}
Tropical toric varieties are f\/irst introduced by Kajiwara~\cite{KajiwaraPre}, see~\cite{KajiwaraPre} or~\cite{MR2428356} for details. For a~projective toric variety, the associated tropical toric variety is homeomorphic to the moment polytope of it \cite[Remark~1.3]{MR2428356}.

\begin{Example}The tropical projective space of $n$-dimension is homeomorphic to the $n$-dimen\-sio\-nal simplex.
\end{Example}

We def\/ine the torus orbit $O_\sigma({\mathbb T})$ over ${\mathbb T}$ corresponding to $\sigma$ by
\begin{gather*}
O_\sigma({\mathbb T}):=\operatorname{Hom}\big(\sigma^\perp \cap M,{\mathbb R}\big),
\end{gather*}
and write the closure of $O_\sigma({\mathbb T})$ in $X_{\mathcal F}({\mathbb T})$ as $X_{{\mathcal F},\sigma}({\mathbb T})$. Let $R \in {\mathbb R}^{>0}$ be a~positive real number and $\operatorname{Log}_R \colon {\mathbb C} \to {\mathbb T}$ denote the map def\/ined by
\begin{gather*}
c \mapsto \begin{cases}
\log_R|c|, & c \neq 0, \\
 -\infty, & c=0.
\end{cases}
\end{gather*}
We have a canonical map $\operatorname{Log}_R \colon X_{\mathcal F}({\mathbb C}) \to X_{\mathcal F}({\mathbb T})$ def\/ined by
\begin{gather}\label{eq:log}
U_\sigma({\mathbb C})=\operatorname{Hom}\big(\sigma^\vee \cap M, {\mathbb C}\big) \to U_\sigma({\mathbb T})=\operatorname{Hom}\big(\sigma^\vee \cap M, {\mathbb T}\big),\qquad v \mapsto \operatorname{Log}_R \circ v.
\end{gather}

\subsection{Polyhedral complex}

We def\/ine the product ${\mathbb R}^{\geq0} \times {\mathbb T} \to {\mathbb T}$ by
\begin{gather*}
r \cdot t := \begin{cases}
r \times t, & t \neq -\infty, \\
-\infty, & r \neq 0, \ t=-\infty, \\
0, & r=0, \ t=-\infty,
\end{cases}
\end{gather*}
for $r \in {\mathbb R}^{\geq 0}$ and $t \in {\mathbb T}$. Here, $\times$ denotes the ordinary multiplication of~${\mathbb R}$. We also def\/ine the product $\big({\mathbb R}^{\geq0}\big)^{n+1} \times {\mathbb T}^{n+1} \to {\mathbb T}$ by
\begin{gather*}
a \cdot b :=\sum_{i=1}^{n+1} a_i \cdot b_i,
\end{gather*}
for $a=(a_1, \dots, a_{n+1}) \in \big({\mathbb R}^{\geq0}\big)^{n+1}$ and $b=(b_1, \dots, b_{n+1}) \in {\mathbb T}^{n+1}$. For each subset $I \subset \{1, \dots,$ $n+1 \}$, we set
\begin{gather*}
{\mathbb T}_{I}^{n+1}:= \big\{ X \in {\mathbb T}^{n+1} \,|\, X_i = -\infty \ \mathrm{for\ any\ } i \in I \big\}.
\end{gather*}

\begin{Definition} A subset $\rho$ of ${\mathbb T}^{n+1}$ is a \textit{convex polyhedron} if there exist a f\/inite collection $\{H_j\}_{j \in J}$ of half-spaces of the form
\begin{gather*}
H_j=\big\{ X \in {\mathbb T}^{n+1} \,|\, c_j \cdot X \leq d_j \big\}, \qquad c_j \in \big({\mathbb R}^{\geq 0}\big)^{n+1}, \qquad d_j \in {\mathbb R},
\end{gather*}
and a subset $I \subset \{1, \dots, n+1 \}$ such that
\begin{gather*}
\rho = \cap_{j \in J} H_j \cap {\mathbb T}_I^{n+1}.
\end{gather*}
A subset $\mu$ of $\rho$ is a \textit{face} of $\rho$ if there exist subsets $J' \subset J$ and $I' \subset \{1, \dots, n+1 \}$ such that $I' \supset I$ and
\begin{gather*}
\mu=\big\{ X \in \rho \,|\,
c_j \cdot X = d_j \ {\rm for\ all}\ j \in J', \ X_i =-\infty \ {\rm for\ all}\ i \in I'\big\}.
\end{gather*}
We write $\mu \prec \rho$ when $\mu$ is a face of $\rho$.
\end{Definition}

\begin{Definition}
A fan ${\mathcal F}$ in $N_{\mathbb R}$ is called {\it unimodular} if every cone in~${\mathcal F}$ can be generated by a~subset of a basis for~$N$.
\end{Definition}

Let ${\mathcal F}$ be a complete and unimodular fan in $N_{\mathbb R}$ in the following.

\begin{Definition} A subset $\rho$ of $X_{\mathcal F}({\mathbb T})$ is a \textit{convex polyhedron} $\rho$ if $\rho \cap U_\sigma({\mathbb T})$ is a convex polyhedron in $U_\sigma({\mathbb T}) \cong {\mathbb T}^{n+1}$ for any $(n+1)$-dimensional cone $\sigma \in {\mathcal F}$. A~subset $\mu$ in $\rho$ is called a~\textit{face} of~$\rho$ when $\mu \cap U_\sigma({\mathbb T})$ is a face of $\rho \cap U_\sigma({\mathbb T})$ for any $(n+1)$-dimensional cone $\sigma \in {\mathcal F}$. We write $\mu \prec \rho$ when~$\mu$ is a face of~$\rho$.
\end{Definition}

\begin{Definition}\label{df:cpx}
A f\/inite set $P$ of convex polyhedra in $X_{\mathcal F}({\mathbb T})$ is a \textit{polyhedral complex} if it satisf\/ies the following conditions:
\begin{itemize}\itemsep=0pt
\item For any convex polyhedron $\rho \in P$, all faces of $\rho$ are elements of $P$.
\item For any two convex polyhedra $\rho_1, \rho_2 \in P$, $\rho_1 \cap \rho_2$ is a face of $\rho_1$ and $\rho_2$.
\end{itemize}
Each element $\rho \in P$ is called a \textit{cell}. In particular, we call $\rho$ a \textit{$k$-cell} when~$\rho$ is $k$-dimensional.
\end{Definition}

Let $P$ be a polyhedral complex in $X_{{\mathcal F}}({\mathbb T})$. For each $\sigma \in {\mathcal F}$, we def\/ine
\begin{gather*}
P_\sigma:=\{ \rho \in P \,|\, \operatorname{relint}(\rho) \subset O_\sigma({\mathbb T}) \},
\end{gather*}
where $\operatorname{relint}(\rho)$ denotes the relative interior of $\rho$.

\subsection{Hypersurfaces in toric varieties}\label{sc:2.3}

Let $K := {\mathbb C}\{t\}$ be the convergent Laurent series f\/ield, equipped with the standard non-archi\-me\-dean valuation~\eqref{eq:val}. Let further $\Delta \subset M_{\mathbb R}$ be a convex lattice polytope. We set $A:=\Delta \cap M$. Let $F=\sum\limits_{m \in A} k_m x^m \in K \big[ x^\pm_{1},\dots,x^\pm_{n+1} \big] $ be a Laurent polynomial over~$K$ in~$n+1$ variables such that $k_m \neq 0$ for all $m \in A$. Let~${\mathcal F}$ denote the normal fan to~$\Delta$. We choose a unimodular subdivision~${\mathcal F}'$ of~${\mathcal F}$.

The tropicalization of $F$ is the piecewise-linear map $\operatorname{trop}(F) \colon O_{\{0\}}({\mathbb T}) \cong {\mathbb R}^{n+1} \to {\mathbb R}$ given by~\eqref{eq:trp}. Let $V_{\{0\}}(\operatorname{trop}(F))$ denote the non-dif\/ferentiable locus of $\operatorname{trop}(F)$ in $O_{\{0\}}({\mathbb T}) \cong {\mathbb R}^{n+1}$. Let further $V(\operatorname{trop}(F))$ denote the closure of $V_{\{0\}}(\operatorname{trop}(F))$ in $X_{{\mathcal F}'}({\mathbb T})$. The tropical hypersur\-face~$V(\operatorname{trop}(F))$ has a structure of a polyhedral complex in $X_{{\mathcal F}'}({\mathbb T})$. Let $P$ denote the polyhedral complex given by $V(\operatorname{trop}(F))$ in the following.

\begin{Example}\looseness=-1 Let $F$, $G$ be polynomials def\/ined by $F=1+x_1+x_2$ and $G=1+x_1+x_2+x_3$. Then the tropicalizations of $F$ and $G$ are $\operatorname{trop}(F)=\max \{ 0, X_1, X_2 \}$ and $\operatorname{trop}(G)=\max \{0, X_1, X_2, X_3\}$. The tropical hypersurfaces $V(\operatorname{trop}(F))$ and $V(\operatorname{trop}(G))$ are shown in Fig.~\ref{fg:troplane}. The polyhedral complex given by $V(\operatorname{trop}(F))$ consists of four $0$-cells and three $1$-cells. The polyhedral complex given by $V(\operatorname{trop}(G))$ consists of eleven $0$-cells, sixteen $1$-cells, and six $2$-cells.
\end{Example}

\begin{figure}[t]\centering
\includegraphics[scale=0.3]{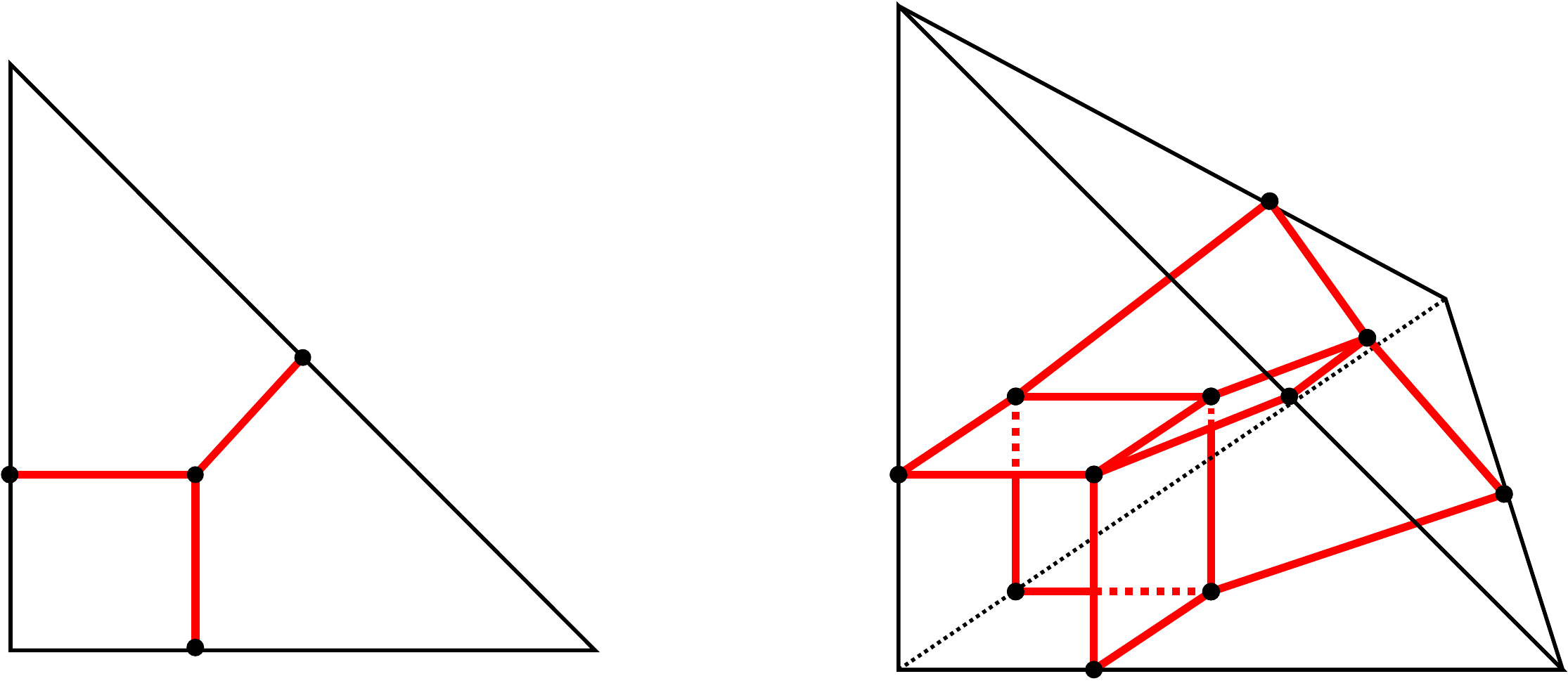}
\caption{Tropical hyperplanes of dimensions $1$ and $2$ in tropical projective spaces.}\label{fg:troplane}
\end{figure}

Let $v \colon A \to {\mathbb Z}$ be the function def\/ined by $v(m):=\operatorname{val}(k_m)$. Let further $\Gamma_v$ be the subset in $M_{\mathbb R} \times {\mathbb R}$ def\/ined by
\begin{gather*}
\Gamma_v := \{ (m,r) \in A \times {\mathbb R} \,|\, r \leq v(m) \},
\end{gather*}
and $\operatorname{conv}(\Gamma_v)$ be the convex hull of $\Gamma_v$ in $M_{\mathbb R} \times {\mathbb R}$. We write the polyhedral subdivision of~$\Delta$ given by the projections of all bounded faces of $\operatorname{conv}(\Gamma_v)$ to $M_{{\mathbb R}}$ as ${\mathcal D}_v$. Note that all vertices of any polyhedron in ${\mathcal D}_v$ are contained in~$M$. It is well known that the tropical hypersur\-face~$V_{\{0\}}(\operatorname{trop}(F))$ is dual to the polyhedral subdivision ${\mathcal D}_v$ \cite[Proposition~3.1.6]{MR3287221}.

\begin{Definition}\label{df:hypsm} The polyhedral subdivision ${\mathcal D}_v$ is \textit{unimodular} if all elements of ${\mathcal D}_v$ are simplices of volume $\frac{1}{(n+1)!}$. We say~$V(\operatorname{trop}(F))$ is \textit{smooth} in this case.
\end{Definition}

\begin{Example} Consider the polynomial $F=t^{-1}+x_1+x_2+x_1^{-1}x_2^{-1}$. In this case, the func\-tion~$v$ is given by $v((0,0))=1$ and $v((1,0))=v((0,1))=v((-1,-1))=0$. The set $\operatorname{conv}(\Gamma_v)$ and the polyhedral subdivision ${\mathcal D}_v$ are shown in Fig.~\ref{fg:latdiv}.
The polyhedral subdivision ${\mathcal D}_v$ is unimodular and $V(\operatorname{trop}(F))$ is smooth in this case.
\end{Example}

\begin{figure}[t]\centering
\includegraphics[scale=0.6]{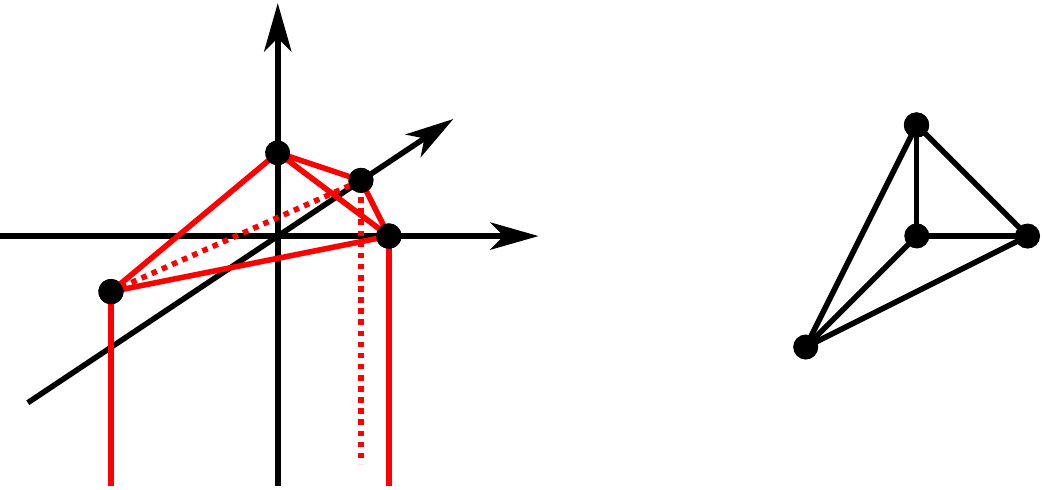}
\caption{The set $\operatorname{conv}(\Gamma_v)$ and the polyhedral subdivision ${\mathcal D}_v$ for $F=t^{-1}+x_1+x_2+x_1^{-1}x_2^{-1}$.}\label{fg:latdiv}
\end{figure}

We set $v_m:=\operatorname{val}(k_m)$ for $m \in A$. For each $\mu \in P_{\{0\}}$, we def\/ine the subset $A_\mu \subset A$ as the set of elements of~$A$ to which the dominant terms of~$F$ at~$\mu$ corresponds:
\begin{gather}\label{eq:A}
A_\mu := \big\{ m \in A \,|\, v_m +m \cdot X = \operatorname{trop}(F)(X) \mathrm{\ for\ all\ } X \in \mu \cap O_{\{0\}}({\mathbb T}) \big\}.
\end{gather}

\begin{Lemma}[\protect{\cite[Lemma 6.5]{MR2079993}}] Assume that the dimension of $\mu \in P_{\{0\}}$ is~$k$ $(0 \leq k \leq n)$. If the tropical hypersurface $V(\operatorname{trop}(F))$ is smooth, then the number of elements of $A_\mu$ is $n+2-k$.
\end{Lemma}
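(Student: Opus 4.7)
The plan is to invoke the standard duality between the tropical hypersurface $V_{\{0\}}(\operatorname{trop}(F))$ and the regular subdivision ${\mathcal D}_v$ of $\Delta$, which is cited immediately before the lemma as \cite[Proposition 3.1.6]{MR3287221}. Under this duality, a $k$-dimensional cell $\mu \in P_{\{0\}}$ corresponds to an $(n+1-k)$-dimensional cell $\sigma_\mu \in {\mathcal D}_v$, and the key observation is that the defining equality in~\eqref{eq:A} identifies $\sigma_\mu$ with $\operatorname{conv}(A_\mu)$: indeed, $\mu$ is precisely the locus in $O_{\{0\}}({\mathbb T})$ where the affine functions $v_m + m \cdot X$ for $m \in A_\mu$ simultaneously achieve the maximum, and this locus is the (relative interior of the) projection of the face of $\operatorname{conv}(\Gamma_v)$ spanned by $\{(m, v_m)\}_{m \in A_\mu}$ to $M_{\mathbb R}$. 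So I would first record this identification carefully and verify that $\dim \sigma_\mu = n+1-k$.

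Next I would use the smoothness hypothesis. By Definition~\ref{df:hypsm}, every cell of ${\mathcal D}_v$ is a simplex of volume $\tfrac{1}{(n+1)!}$, so every face of every cell is itself a unimodular simplex of the appropriate dimension. In particular $\sigma_\mu$ is a unimodular $(n+1-k)$-simplex, hence has exactly $(n+1-k)+1 = n+2-k$ vertices, and all of its vertices lie in $M$.

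The remaining step is to check that $A_\mu$ is exactly the vertex set of $\sigma_\mu$, not merely the set of lattice points contained in $\sigma_\mu$. The inclusion ``vertices of $\sigma_\mu \subset A_\mu$'' is immediate from the construction. For the reverse inclusion, one uses the standard fact that a unimodular simplex $\sigma_\mu$ of volume $\tfrac{1}{(n+1-k)!}$ in an affine lattice subspace of $M_{\mathbb R}$ contains no lattice points other than its vertices (its vertices form an affine lattice basis of the affine sublattice they generate). Combined with $A_\mu \subset \sigma_\mu \cap M$, this forces $A_\mu$ to coincide with the vertex set of $\sigma_\mu$, giving $|A_\mu|=n+2-k$.

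The only real obstacle is the bookkeeping for the duality statement, namely verifying cleanly that $\sigma_\mu = \operatorname{conv}(A_\mu)$ and that $\dim \sigma_\mu = n+1-k$; once that identification is in place, the rest is a routine consequence of unimodularity of simplices. Since the duality is quoted from \cite{MR3287221}, the proof can be kept very short, essentially a one-line combinatorial deduction.
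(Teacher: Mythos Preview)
Your argument is correct. Note, however, that the paper does not actually supply a proof of this lemma: it is quoted verbatim from Mikhalkin \cite[Lemma~6.5]{MR2079993} and simply invoked. So there is no ``paper's own proof'' to compare against; your write-up would serve as a self-contained justification that the paper chose to outsource.

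Substantively, what you outline is exactly the standard argument and matches the reasoning behind Mikhalkin's statement: the duality between cells of $V_{\{0\}}(\operatorname{trop}(F))$ and cells of ${\mathcal D}_v$ sends the $k$-cell $\mu$ to an $(n{+}1{-}k)$-cell $\sigma_\mu$ with $\sigma_\mu=\operatorname{conv}(A_\mu)$; the smoothness hypothesis forces $\sigma_\mu$ to be a unimodular simplex, hence $\sigma_\mu\cap M$ equals its vertex set and has cardinality $n{+}2{-}k$; and the chain of inclusions
\[
\{\text{vertices of }\sigma_\mu\}\subset A_\mu\subset \sigma_\mu\cap M
\]
then collapses. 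The only point worth stating a bit more carefully in a final version is why every $m\in A_\mu$ lies in $\sigma_\mu$: this is because $(m,v_m)$ lies on the same bounded face of $\operatorname{conv}(\Gamma_v)$ whose projection is $\sigma_\mu$, so $m\in\sigma_\mu$ automatically. Once that is said, the proof is complete.
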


Assume that $V(\operatorname{trop}(F))$ is smooth. We f\/ix a suf\/f\/iciently large $R \in {\mathbb R}^{>0}$ such that $1/R$ is smaller than the radius of convergence of $k_m$ for all $m \in A$, and set $S_R^1 := \{ z \in {\mathbb C} \,|\, |z|=R \}$. For $q \in S_R^1$, let $f_q \in {\mathbb C}\big[x^\pm_{1}, \dots,x^\pm_{n+1}\big]$ be the Laurent polynomial obtained by substituting~$1/q$ to~$t$ in~$F$. We write the closure of $\big\{ x \in O_{\{0\}}({\mathbb C}) \,|\, f_q(x)=0 \big\}$ in $X_{{\mathcal F}'}({\mathbb C})$ as~$V_q$.

Let $\sigma \in {\mathcal F}'$ be an $l$-dimensional cone. For $\mu \in P_\sigma$, let $\mu' \in P_{\{0\}}$ be the cell such that $\mu = \mu' \cap X_{{\mathcal F}',\sigma}$. We assume that the dimension of~$\mu'$ is~$k$. Here, we have $l \leq k$. We def\/ine \textit{standard coordinates} on $O_\sigma({\mathbb C})$ and $O_\sigma({\mathbb T})$ with respect to~$\mu$ as follows. First, we number all elements of~$A_{\mu'}$ from~$0$ to $n+1-k$ and write them as $(m_0, \dots, m_{n+1-k})$. We set
\begin{gather*}
\tilde{x}_i:=q^{v_{m_i}}x^{m_i}/q^{v_{m_0}}x^{m_0},\qquad \widetilde{X}_i :=(v_{m_{i}}+{m_i} \cdot X)-(v_{m_{0}}+{m_0} \cdot X),
\end{gather*}
for $i=1,\dots, n+1-k$. Since $V(\operatorname{trop}(F))$ is smooth, we can extend $(\tilde{x}_1, \dots, \tilde{x}_{n+1-k})$ and $(\widetilde{X}_1, \dots, \widetilde{X}_{n+1-k})$ to $(\tilde{x}_1, \dots, \tilde{x}_{n+1-l})$ and $(\widetilde{X}_1, \dots, \widetilde{X}_{n+1-l})$ which form coordinate systems on $O_\sigma({\mathbb C})$ and~$O_\sigma({\mathbb T})$ respectively by setting
\begin{gather*}
\tilde{x}_i :=q^{a_i} \prod_{j=1}^{n+1}x_j^{b_{ij}},\qquad \widetilde{X}_i :=a_i+\sum_{j=1}^{n+1} b_{ij}X_j,
\end{gather*}
for $i=n+2-k,\dots, n+1-l$. Here, numbers $a_i$ and~$b_{ij}$ are appropriate integral numbers. We call $(\tilde{x}_1, \dots, \tilde{x}_{n+1-l})$ and $(\widetilde{X}_1, \dots, \widetilde{X}_{n+1-l})$ \textit{standard coordinates with respect to}~$\mu$. There are some ambiguities of them resulting from dif\/ferent numbering of $(m_0, \dots, m_{n+1-k})$ and dif\/ferent choices of numbers~$a_i$ and~$b_{ij}$.

Let $H_\mu \colon ({\mathbb C}^{\ast})^{n+1-l} \to ({\mathbb C}^{\ast})^{n+1-l}$ be the map def\/ined by
\begin{gather*}
(x_1, \dots, x_{n+1-l}) \mapsto (\tilde{x}_1, \dots, \tilde{x}_{n+1-l}),
\end{gather*}
and $M_\mu \colon {\mathbb R}^{n+1-l} \to {\mathbb R}^{n+1-l}$ be the map def\/ined by
\begin{gather*}
(X_1, \dots, X_{n+1-l}) \mapsto \big(\widetilde{X}_1, \dots, \widetilde{X}_{n+1-l}\big).
\end{gather*}
Then the following diagram is commutative.
\begin{gather*}
\begin{CD}
({\mathbb C}^{\ast})^{n+1-l} @>H_\mu>> ({\mathbb C}^{\ast})^{n+1-l} \\
@V\operatorname{Log}_RVV @VV\operatorname{Log}_RV \\
{\mathbb R}^{n+1-l} @>M_\mu>> {\mathbb R}^{n+1-l},
\end{CD}
\end{gather*}
where the map $\operatorname{Log}_R \colon ({\mathbb C}^{\ast})^{n+1-l} \to {\mathbb R}^{n+1-l}$ is def\/ined by
\begin{gather*}
(x_1, \dots , x_{n+1-l}) \to (\log_R|x_1|, \dots ,\log_R|x_{n+1-l}|).
\end{gather*}

\begin{Example}
Let us consider the polynomial $F=t^{-1}+x_1+x_2+x_1^{-1}x_2^{-1}$ again. We have $f_q=q+x_1+x_2+x_1^{-1}x_2^{-1}$. The tropicalization of $F$ is $\operatorname{trop}(F)=\max \{ 1, X_1, X_2, -X_1-X_2 \}$. The tropical hypersurface def\/ined by $\operatorname{trop}(F)$ is shown in Fig.~\ref{fg:trophypex}. Let $\nu$ and $\mu$ denote the vertex and the edge of $V(\operatorname{trop}(F))$ as shown in Fig.~\ref{fg:trophypex}. The set $A_\nu$ is given by $\{(0,0), (1,0), (-1,-1) \}$. We set $y_1:=x_1/q=q^{-1}x_1$, $y_2:=x_1^{-1}x_2^{-1}/q=q^{-1}x_1^{-1}x_2^{-1}$ and $Y_1:=-1+X_1$, $Y_2:=-1-X_1-X_2$. Then the sets of function $(y_1, y_2)$ and $(Y_1, Y_2)$ form standard coordinates with respect to~$\nu$ on $O_{\{0\}}({\mathbb C})$ and $O_{\{0\}}({\mathbb T})$, respectively. The set $A_\mu$ is given by $\{(1,0), (-1,-1) \}$. We set $z_1:=x_1^{-1}x_2^{-1}/x_1=x_1^{-2}x_2^{-1}$ and $Z_1:=-2X_1-X_2$. For instance, if we set $z_2:=q^2x_1$ and $Z_2:=2+X_1$, then we have
\begin{gather*}
\det \begin{pmatrix}
-2 & 1 \\
-1 & 0
\end{pmatrix} =1.
\end{gather*}
Hence, the sets of functions $(z_1, z_2)$ and $(Z_1, Z_2)$ form standard coordinates with respect to~$\mu$ on~$O_{\{0\}}({\mathbb C})$ and $O_{\{0\}}({\mathbb T})$, respectively.
\end{Example}

\begin{figure}[t]\centering
\includegraphics[scale=0.45]{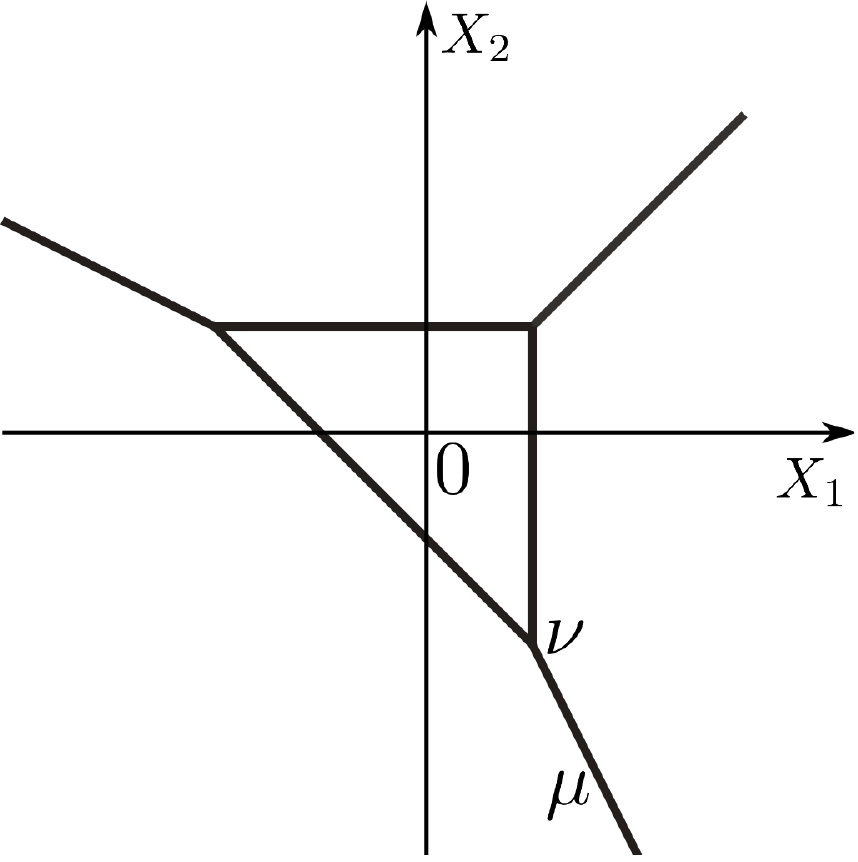}
\caption{The tropical hypersurface def\/ined by $\operatorname{trop}(F)=\max \{ 1, X_1, X_2 , -X_1-X_2 \}$.}\label{fg:trophypex}
\end{figure}

\section{Tropical localization}\label{sc:3}

Tropical localization is a way to simplify algebraic hypersurfaces around the tropical limit points by ignoring terms which are not dominant in the tropical limit. This technique is f\/irst introduced by Mikhalkin~\cite{MR2079993}. In this section, we give a concrete def\/ining function realizing the tropical localization based on the idea of Mikhalkin. There is also a similar construction of the tropical localization in~\cite{MR2240909}.

Let $K := {\mathbb C}\{t\}$ be the convergent Laurent series f\/ield, equipped with the standard non-archimedean valuation~\eqref{eq:val}. Let further $\Delta \subset M_{\mathbb R}$ be a convex lattice polytope. We set $A:=\Delta \cap M$. Let $F=\sum\limits_{m \in A} k_m x^m \in K \big[ x^\pm_{1},\dots,x^\pm_{n+1} \big] $ be a polynomial over $K$ such that $k_m \neq 0$ for all $m \in A$. We set $v_m:=\operatorname{val}(k_m)$. We f\/ix a suf\/f\/iciently large $R \in {\mathbb R}^{>0}$ such that $1/R$ is smaller than the radius of convergence of~$k_m$ for all $m \in A$, and set $S_R^1 := \{z \in {\mathbb C} \,|\, |z|=R \}$.
For $q \in S_R^1$, let $f_q \in {\mathbb C}\big[x^\pm_{1},\dots,x^\pm_{n+1}\big]$ denote the polynomial obtained by substituting~$1/q$ to~$t$ in~$F$.
Let~${\mathcal F}$ denote the normal fan to~$\Delta$. We choose a unimodular subdivision~${\mathcal F}'$ of~${\mathcal F}$. Let~$V_q$ be the hypersurface in~$X_{{\mathcal F}'}({\mathbb C})$ def\/ined by~$f_q$. Let further $V(\operatorname{trop}(F))$ be the tropical hypersurface in~$X_{{\mathcal F}'}({\mathbb C})$ def\/ined by~$\operatorname{trop}(F)$ and $P$ be the polyhedral complex in~$X_{{\mathcal F}'}({\mathbb T})$ given by $V(\operatorname{trop}(F))$.
Assume that~$V(\operatorname{trop}(F))$ is smooth (see Def\/inition~\ref{df:hypsm}).

Let $C_0, C_1 \in {\mathbb R} $ be constants such that $0<C_1<C_0 \ll 1$. Let $b \colon {\mathbb R} \to {\mathbb R}$ be a monotone $C^{\infty}$ function on~${\mathbb R}$ satisfying following conditions:
\begin{enumerate}\itemsep=0pt
\item[1)] $b(X)=1$ if and only if $X \leq C_1$,
\item[2)] $b(X)=0$ if and only if $X \geq C_0$.
\end{enumerate}
The graph of the function $b$ is shown in Fig.~\ref{fg:cutoff}.

\begin{figure}[t]\centering
\includegraphics[scale=0.4]{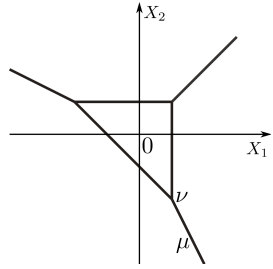}
\caption{The graph of the function~$b$.}\label{fg:cutoff}
\end{figure}

We def\/ine the tropical localization of the hypersurface $V_q$ as follows.

\begin{Definition}\label{df:loc} For each $m \in A$, let $b_m \colon O_{\{0\}}({\mathbb C}) \to {\mathbb R}$ be the function def\/ined by
\begin{gather*}
b_m(x) :=\prod_{i \in A}b\big(\log_R\big|q^{v_i}x^i\big|-\log_R\big|{q^{v_m}x^m}\big|\big).
\end{gather*}
In addition, let $\tilde{f}_q \colon O_{\{0\}}({\mathbb C}) \to {\mathbb C}$ be the function def\/ined by
\begin{gather*}
\tilde{f}_q(x) :=\sum_{m \in A} b_m(x)q^{v_m} x^m.
\end{gather*}
We def\/ine the {\it tropical localization} $W_q$ of~$V_q$ as the closure of $\{x \in O_{\{0\}}({\mathbb C}) \,|\, \tilde{f}_q(x)=0 \}$ in~$X_{{\mathcal F}'}({\mathbb C})$.
\end{Definition}

By applying Def\/inition~\ref{df:loc} to $f(x_1, \dots, x_{n+1})=1+x_1+ \dots +x_{n+1}$, we can construct the tropically localized hyperplane.

\begin{Definition}\label{df:lochyp}
We def\/ine the function $\tilde{f} \colon O_{\{0\}}({\mathbb C}) \to {\mathbb C}$ by
\begin{gather*}
\tilde{f}(x_1,\dots, x_{n+1}) :=\prod_{i=1}^{n+1}b(\log_R|x_i|)+\sum_{i=1}^{n+1}\left\{b(-\log_R|x_i|) \prod_{j=1}^{n+1}b(\log_R|x_j|-\log_R|x_i|) \right\}x_i,
\end{gather*}
We call the submanifold def\/ined as the zero locus of $\tilde{f}$ the {\it tropically localized hyperplane}.
\end{Definition}

\begin{Definition}\label{df:nbd} For each $\mu \in P$, we def\/ine $D_\mu \subset X_{{\mathcal F}'}({\mathbb C})$ and $\widehat{D}_\mu \subset X_{{\mathcal F}'}({\mathbb T})$ as follows. For $\mu \in P_{\{0\}}$, we def\/ine $D_\mu \subset X_{{\mathcal F}'}({\mathbb C})$ and $\widehat{D}_\mu \subset X_{{\mathcal F}'}({\mathbb T})$ by
\begin{gather*}
D_\mu:=
\overline{
\left\{ x \in O_{\{0\}}({\mathbb C}) \left|\,
\begin{array}{@{}l@{}}
b_{m}(x)>0\ {\rm for\ } m \in A_\mu, \\
b_{m}(x)=0\ {\rm for\ } m \in A \setminus A_\mu\\
\end{array}
\right.\right\}
},\\
\widehat{D}_\mu:=
\overline{
\left\{ X \in O_{\{0\}}({\mathbb T}) \left|\,
\begin{array}{@{}l@{}}
|(v_{m'}+m' \cdot X)-(v_{m}+m \cdot X)| < C_0\ {\rm for\ } m, m' \in A_\mu,\\
{\rm for\ any\ } m \in A \setminus A_\mu, {\rm \ there\ exists\ } m' \in A_\mu \\
{\rm such\ that\ } (v_{m'}+m' \cdot X)-(v_{m}+m \cdot X) \geq C_0 \\
\end{array}
\right.\right\}},
\end{gather*}
where $A_\mu \subset A$ is the set def\/ined in~\eqref{eq:A} and the overlines mean the closure in $X_{{\mathcal F}'}({\mathbb C})$ and~$X_{{\mathcal F}'}({\mathbb T})$, respectively.

For $\mu \in P_\sigma$ $(\sigma \neq \{0\})$, let $\mu' \in P_{\{0\}}$ be the cell such that $\mu = \mu' \cap X_{{\mathcal F}',\sigma}({\mathbb T})$. We def\/ine $D_\mu \subset X_{{\mathcal F}',\sigma}({\mathbb C})$ and $\widehat{D}_\mu \subset X_{{\mathcal F}',\sigma}({\mathbb T})$ by
\begin{gather}\label{eq:D}
D_\mu :=D_{\mu'} \cap X_{{\mathcal F}',\sigma}({\mathbb C}), \qquad \widehat{D}_\mu :=\widehat{D}_{\mu'} \cap X_{{\mathcal F}',\sigma}({\mathbb T}).
\end{gather}
\end{Definition}

The monomial $v_{m}+m \cdot X\ (m \in A_\mu)$ of $\operatorname{trop}(F)$ corresponds to the monomial $k_{m}x^{m}$ of~$F$. Hence, we have $D_\mu=(\operatorname{Log}_R)^{-1}(\widehat{D}_\mu)$ for any $\mu \in P$, where $\operatorname{Log}_R$ is the map from $X_{{\mathcal F}'}({\mathbb C})$ to~$X_{{\mathcal F}'}({\mathbb T})$ def\/ined in~\eqref{eq:log}.

\begin{Example} Consider the polynomial $F=t^{-1}+x_1+x_2+x_1^{-1}x_2^{-1}$. The tropical hypersur\-face~$V(\operatorname{trop}(F))$ and the regions $\{\widehat{D}_\mu\}_{\mu \in P}$ for $F$ are shown in Figs.~\ref{fg:trophypex2} and~\ref{fg:trophypex3}.
$\nu_i$ and $\mu_i$ $(i=1, \dots, 6)$ denote vertices and edges of $V(\operatorname{trop}(F))$ respectively as shown in Fig.~\ref{fg:trophypex2}.
Each~$\widehat{D}_{\nu_i}$ is the region colored in dark gray and each~$\widehat{D}_{\mu_i}$ is the region colored in light gray as shown in Fig.~\ref{fg:trophypex3}.
\end{Example}

\begin{figure}[t]\centering
\begin{minipage}[b]{0.48\hsize}
\centering
\includegraphics[width=40mm]{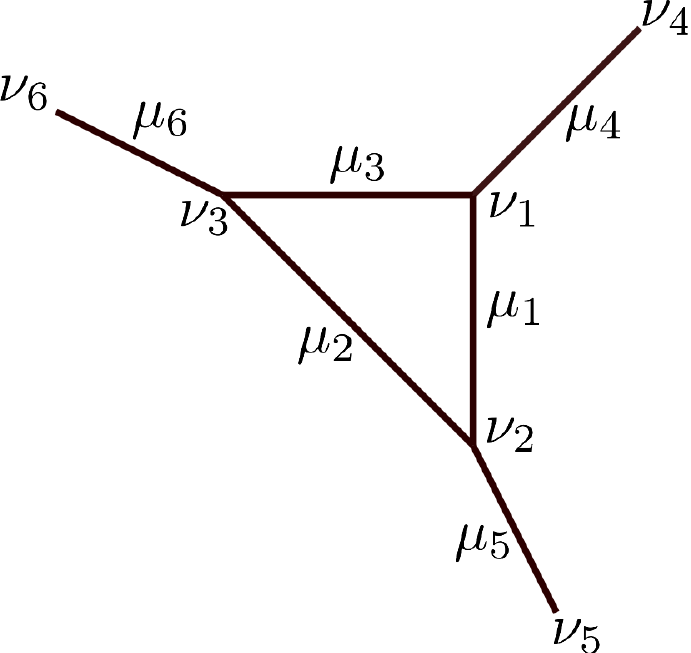}
\caption{The tropical hypersurface $V(\operatorname{trop}(F))$ for $F=t^{-1}+x_1+x_2+x_1^{-1}x_2^{-1}$.}
\label{fg:trophypex2}
\end{minipage}\quad
\begin{minipage}[b]{0.48\hsize}
\centering
\includegraphics[width=40mm]{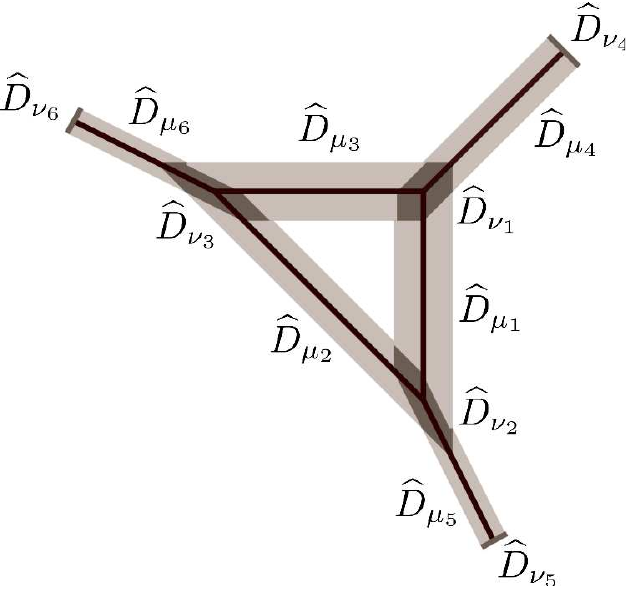}
\caption{The regions $\{\widehat{D}_{\rho}\}_\rho$ for $F=t^{-1}+x_1+x_2+x_1^{-1}x_2^{-1}$.}\label{fg:trophypex3}
\end{minipage}
\end{figure}

\begin{Lemma}\label{lm:a}
If $C_0$ is sufficiently small, then $D_\rho \cap X_{{\mathcal F}',\sigma}({\mathbb C}) \neq \varnothing$ if and only if $\rho \cap X_{{\mathcal F}',\sigma}({\mathbb T}) \neq \varnothing$ for any $\rho \in P_{\{0\}}$ and $\sigma \in {\mathcal F}'$.
\end{Lemma}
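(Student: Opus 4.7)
The strategy is to reduce the claim to a purely tropical statement and then to compare the recession cones of the cell $\rho$ and of the thickened region $\widehat{D}_\rho$.

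First, the map $\operatorname{Log}_R \colon X_{{\mathcal F}'}({\mathbb C}) \to X_{{\mathcal F}'}({\mathbb T})$ is surjective on each torus orbit $O_\tau({\mathbb C}) \to O_\tau({\mathbb T})$, so $\operatorname{Log}_R^{-1}(X_{{\mathcal F}',\sigma}({\mathbb T})) = X_{{\mathcal F}',\sigma}({\mathbb C})$. Combined with the identity $D_\rho = \operatorname{Log}_R^{-1}(\widehat{D}_\rho)$ noted just after Definition~\ref{df:nbd}, this gives
\begin{gather*}
D_\rho \cap X_{{\mathcal F}',\sigma}({\mathbb C}) = \operatorname{Log}_R^{-1}\bigl(\widehat{D}_\rho \cap X_{{\mathcal F}',\sigma}({\mathbb T})\bigr),
\end{gather*}
and surjectivity of $\operatorname{Log}_R$ makes one side non-empty iff the other is. So it suffices to prove the tropical statement $\widehat{D}_\rho \cap X_{{\mathcal F}',\sigma}({\mathbb T}) \neq \varnothing$ iff $\rho \cap X_{{\mathcal F}',\sigma}({\mathbb T}) \neq \varnothing$.

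Next, I compare recession cones inside $O_{\{0\}}({\mathbb T}) \cong N_{\mathbb R}$. The cell $\rho$ is cut out by the linear equalities $(v_{m'}-v_m)+(m'-m)\cdot X=0$ for $m,m'\in A_\rho$ together with the inequalities $(v_{m'}-v_m)+(m'-m)\cdot X\ge 0$ for $m'\in A_\rho$, $m\in A\setminus A_\rho$. The pre-closure open set defining $\widehat{D}_\rho$ is cut out by the strict bounds $|(v_{m'}-v_m)+(m'-m)\cdot X|<C_0$ for $m,m'\in A_\rho$, together with the disjunctive requirement that for each $m\in A\setminus A_\rho$ some $m'\in A_\rho$ satisfies $(v_{m'}-v_m)+(m'-m)\cdot X\ge C_0$. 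The recession cone of the first family is $(m'-m)\cdot V=0$ for $m,m'\in A_\rho$, which forces $(m'-m)\cdot V$ to be independent of $m'\in A_\rho$; consequently the disjunctive recession condition arising from the second family collapses to the universal condition $(m'-m)\cdot V\ge 0$ for all $m'\in A_\rho$, $m\in A\setminus A_\rho$. The recession cones of $\rho$ and of $\widehat{D}_\rho$ therefore agree. Choosing $C_0$ smaller than a uniform lower bound (over the finitely many cells $\rho\in P$) on the strict gaps $(v_{m'}-v_m)+(m'-m)\cdot X_0$ at a chosen interior point $X_0\in\operatorname{relint}(\rho)$ guarantees that $\widehat{D}_\rho$ is non-empty, so its recession cone is honestly defined.

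Finally, I invoke the standard principle from tropical toric geometry: for a polyhedron $Q\subset N_{\mathbb R}=O_{\{0\}}({\mathbb T})$, the closure of $Q$ in $X_{{\mathcal F}'}({\mathbb T})$ meets $X_{{\mathcal F}',\sigma}({\mathbb T})=\bigsqcup_{\tau\succeq\sigma}O_\tau({\mathbb T})$ if and only if $\operatorname{rec}(Q)\cap\operatorname{relint}(\tau)\neq\varnothing$ for some cone $\tau\in{\mathcal F}'$ with $\sigma\preceq\tau$. This follows because a sequence in $Q$ converges to a point of $O_\tau({\mathbb T})$ exactly when it escapes to infinity along a direction in $\operatorname{relint}(\tau)$, combined with the description of asymptotic directions of a polyhedron by its recession cone. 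Applying this to both $Q=\rho$ and $Q=\widehat{D}_\rho$ and using the recession-cone equality from the previous step yields the desired equivalence.

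The main obstacle is the bookkeeping for the recession cone of $\widehat{D}_\rho$: because its definition mixes strict inequalities with an existential quantifier over $A_\rho$, one must verify that on the recession cone this existential quantifier collapses so as to match the universal one appearing in $\operatorname{rec}(\rho)$. Once this step is in hand, the smallness assumption on $C_0$ serves only to ensure $\widehat{D}_\rho\neq\varnothing$, which is routine given that $A$ and $P$ are finite.
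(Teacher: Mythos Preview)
Your proof is correct and takes a genuinely different, more structural route than the paper. The paper's argument is brief and rather informal: for the forward direction it observes that points of $\mu:=\rho\cap X_{{\mathcal F}',\sigma}({\mathbb T})$ lying far from all proper faces of $\mu$ already satisfy the defining inequalities of $\widehat{D}_\rho$ once $C_0$ is small; for the converse it simply says ``$\widehat{D}_\rho$ has to be near to the cell $\rho$'', without further quantification. Your approach instead reduces everything to the equality $\operatorname{rec}(\widehat{D}_\rho)=\operatorname{rec}(\rho)$ in $N_{\mathbb R}$ and then appeals to the orbit--closure criterion in $X_{{\mathcal F}'}({\mathbb T})$. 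The key observation that within the linear space $\{V:(m'-m)\cdot V=0,\;m,m'\in A_\rho\}$ the disjunctive constraint collapses to the universal one is exactly what makes the recession cones coincide, and it makes transparent that the smallness of $C_0$ enters only to guarantee $\widehat{D}_\rho\neq\varnothing$.

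One small point worth tightening: $\widehat{D}_\rho\cap O_{\{0\}}({\mathbb T})$ is, a priori, only a \emph{finite union} of polyhedra (one for each choice function $m\mapsto m'(m)\in A_\rho$), so your final ``standard principle for a polyhedron $Q$'' does not literally apply. The fix is immediate with what you have already shown: each nonempty piece has recession cone contained in $L\cap C=\operatorname{rec}(\rho)$ (indeed equal to it, by the same collapsing argument), so if a sequence in $\widehat{D}_\rho$ converges into $O_\tau({\mathbb T})$ one may pass to a subsequence lying in a single polyhedral piece and then invoke the polyhedral criterion. With that remark added, your argument is complete and, compared to the paper's proof, yields a cleaner and more reusable statement.
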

\begin{proof} Assume that $\rho \cap X_{{\mathcal F}',\sigma}({\mathbb T}) \neq \varnothing$. We set $\mu:=\rho \cap X_{{\mathcal F}',\sigma}({\mathbb T})$.
We show that $\widehat{D}_\mu =\widehat{D}_\rho \cap X_{{\mathcal F}',\sigma}({\mathbb T}) \neq \varnothing$. If $C_0$ is suf\/f\/iciently small, points in $\rho$ which are suf\/f\/iciently far from all faces of~$\rho$ in~$P_{\{0\}}$ are contained in~$\widehat{D}_\rho$. It follows that points in $\mu$ which are suf\/f\/iciently far from all faces of~$\mu$ are contained in~$\widehat{D}_\rho$, and hence in~$\widehat{D}_\mu$. Conversely, assume that $\rho \cap X_{{\mathcal F}',\sigma}({\mathbb T}) = \varnothing$. Since the region~$\widehat{D}_\rho$ has to be near to the cell $\rho$ if $C_0$ is suf\/f\/iciently small, we have $\widehat{D}_\rho \cap X_{{\mathcal F}',\sigma}({\mathbb T}) = \varnothing$.
\end{proof}

\begin{Lemma} If $C_0$ is sufficiently small, then one has
\begin{gather*}
\bigcup_{\rho \in P_{\{0\}}} D_\rho = \bigcup_{\sigma \in {\mathcal F}'} \bigg\{\bigcup_{\mu \in P_\sigma} (D_\mu \cap O_\sigma({\mathbb C})) \bigg\}.
\end{gather*}
\end{Lemma}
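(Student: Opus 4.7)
Both sides lie in $X_{{\mathcal F}'}({\mathbb C})$, so the plan is to intersect each side with each torus orbit $O_\sigma({\mathbb C})$ and check the equality orbit by orbit. Equivalently, it suffices to show that for each $x$ on the left hand side, letting $\sigma$ be the unique cone with $x \in O_\sigma({\mathbb C})$, there is some $\mu \in P_\sigma$ with $x \in D_\mu$; conversely every $D_\mu \cap O_\sigma({\mathbb C})$ must embed into some $D_\rho$ with $\rho \in P_{\{0\}}$.

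The inclusion $\supseteq$ is immediate from the defining equation \eqref{eq:D}: given $\mu \in P_\sigma$ with $\sigma \neq \{0\}$, the associated cell $\mu' \in P_{\{0\}}$ satisfies $D_\mu = D_{\mu'} \cap X_{{\mathcal F}',\sigma}({\mathbb C}) \subseteq D_{\mu'}$, so any point of $D_\mu \cap O_\sigma({\mathbb C})$ belongs to $\bigcup_{\rho \in P_{\{0\}}} D_\rho$ via $\rho = \mu'$. The case $\sigma = \{0\}$ is tautological since then $P_\sigma = P_{\{0\}}$.

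For $\subseteq$, take $x \in D_\rho \cap O_\sigma({\mathbb C})$ with $\rho \in P_{\{0\}}$. If $\sigma = \{0\}$ then $\mu := \rho$ suffices. Otherwise $x \in D_\rho \cap X_{{\mathcal F}',\sigma}({\mathbb C}) \neq \varnothing$, so by Lemma \ref{lm:a} the set $\rho \cap X_{{\mathcal F}',\sigma}({\mathbb T})$ is also nonempty. Passing to the tropical side via $D_\rho = \operatorname{Log}_R^{-1}(\widehat{D}_\rho)$, the point $\operatorname{Log}_R(x)$ lies in $\widehat{D}_\rho \cap O_\sigma({\mathbb T})$, which for small $C_0$ is a narrow tube around the face of $\rho$'s closure whose relative interior lies in $O_\sigma({\mathbb T})$. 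I would show that this face is a single cell $\mu \in P_\sigma$, given as $\mu = \rho \cap X_{{\mathcal F}',\sigma}({\mathbb T})$, with parent $\mu' = \rho$ in the notation of \eqref{eq:D}. Then \eqref{eq:D} gives $D_\mu = D_\rho \cap X_{{\mathcal F}',\sigma}({\mathbb C}) \ni x$, which completes the inclusion.

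The main obstacle is the combinatorial assertion invoked in the second case: that $\rho \cap X_{{\mathcal F}',\sigma}({\mathbb T})$ is a single cell of $P_\sigma$ with relative interior lying exactly in $O_\sigma({\mathbb T})$, and with parent $\rho \in P_{\{0\}}$. This requires unpacking the duality between $P$ and the polyhedral subdivision ${\mathcal D}_v$ of $\Delta$, together with the unimodularity of ${\mathcal F}'$, which forces the recession cone of $\rho$ to interact cleanly with the fan ${\mathcal F}'$ and rules out the possibility that the boundary face of $\rho$ splits or descends into a smaller orbit $O_\tau$ with $\tau \succ \sigma$. The hypothesis that $C_0$ is sufficiently small is used both to invoke Lemma \ref{lm:a} and to guarantee that the neighborhoods $\widehat{D}_\rho$ remain tight enough around $\rho$ that the cellular picture above faithfully reflects the decomposition of $\bigcup_\rho D_\rho$ by torus orbits.
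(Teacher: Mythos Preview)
Your argument is essentially identical to the paper's: the paper observes that the inclusion $\supseteq$ is obvious, and for $\subseteq$ takes $x \in D_\rho$, picks the unique $\sigma$ with $x \in O_\sigma({\mathbb C})$, invokes Lemma~\ref{lm:a} to get $\rho \cap X_{{\mathcal F}',\sigma}({\mathbb T}) \neq \varnothing$, sets $\mu := \rho \cap X_{{\mathcal F}',\sigma}({\mathbb T})$, and then reads off $D_\mu = D_\rho \cap X_{{\mathcal F}',\sigma}({\mathbb C}) \ni x$ directly from~\eqref{eq:D}.

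The one difference is your final paragraph, where you flag as a ``main obstacle'' the claim that $\mu = \rho \cap X_{{\mathcal F}',\sigma}({\mathbb T})$ is a single cell of $P_\sigma$ with parent $\mu' = \rho$. The paper does not treat this as an obstacle at all; it simply asserts it and moves on. This is because the polyhedral complex $P$ is \emph{defined} as the cell structure on the closure of $V_{\{0\}}(\operatorname{trop}(F))$ in $X_{{\mathcal F}'}({\mathbb T})$, so $\mu$ is a face of $\rho$ by construction, and the paper's framework (Definition~\ref{df:nbd} and the sentence introducing $\mu'$ just before~\eqref{eq:D}) already presupposes that every boundary cell arises uniquely as $\mu' \cap X_{{\mathcal F}',\sigma}({\mathbb T})$ for some $\mu' \in P_{\{0\}}$. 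You do not need to invoke the duality with ${\mathcal D}_v$ or argue about recession cones here; once~\eqref{eq:D} is accepted as a well-posed definition, the step is immediate.
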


\begin{proof}
It is obvious that the right-hand side is contained in the left-hand side. We show that the left-hand side is contained in the right-hand side. Let $x$ be any point in $D_\rho$ $(\rho \in P_{\{0\}})$. There exists the unique cone $\sigma \in {\mathcal F}'$ such that $x \in O_\sigma({\mathbb C})$. Then, the point~$x$ is contained in $D_\rho \cap X_{{\mathcal F}',\sigma}({\mathbb C})$. From Lemma~\ref{lm:a}, we have $\rho \cap X_{{\mathcal F}',\sigma}({\mathbb T}) \neq \varnothing$. We set $\mu:=\rho \cap X_{{\mathcal F}',\sigma}({\mathbb T})$. Then we have $D_\rho \cap X_{{\mathcal F}',\sigma}({\mathbb C}) = D_\mu$ from~\eqref{eq:D}. Hence one has $x \in D_\mu \cap O_\sigma({\mathbb C})$.
\end{proof}

For each subset $\{m_0, \dots, m_p\} \subset A \ (p \in {\mathbb Z}_{\geq0})$, we def\/ine
\begin{gather*}
D_{m_0, \dots, m_p}:=
\overline{
\left\{ x \in O_{\{0\}}({\mathbb C}) \left|\,
\begin{array}{@{}l@{}}
b_{m_i}(x)>0\ {\rm for\ } i=0, \dots, p,\\
b_{m}(x)=0\ {\rm for\ } m \in A \setminus \{m_0, \dots, m_{p}\}\\
\end{array}
\right.\right\}
},
\end{gather*}
where the overline means the closure in $X_{{\mathcal F}'}({\mathbb C})$.

\begin{Lemma}\label{lm:oth}
Let $\{m_0, \dots, m_{p}\}$ be a subset of $A$ such that $p \geq 1$ and $\{m_0, \dots, m_{p}\} \neq A_\rho$ for any $\rho \in P_{\{0\}}$. If the constant $C_0$ is sufficiently small, then one has $D_{m_0, \dots, m_p}=\varnothing$.
\end{Lemma}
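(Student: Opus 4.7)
The plan is to translate the condition $b_{m_i}(x) > 0$ into a tropical inequality and show that it is infeasible for small $C_0$. Set $S := \{m_0, \dots, m_p\}$. Since $|q| = R$, the substitution $X := \log_R |x|$ turns each factor appearing in $b_{m_i}(x)$ into $b\bigl(v_{m'} + m' \cdot X - (v_{m_i} + m_i \cdot X)\bigr)$, so the conjunction $b_{m_i}(x) > 0$ for all $i$ is equivalent to
\[
F_S(X) := \operatorname{trop}(F)(X) - \min_{m \in S}(v_m + m \cdot X) < C_0.
\]
Because $D_{m_0, \dots, m_p}$ is, by definition, the closure in $X_{{\mathcal F}'}({\mathbb C})$ of a subset of the open torus orbit $O_{\{0\}}({\mathbb C})$, it is enough to prove that $\{X \in {\mathbb R}^{n+1} : F_S(X) < C_0\}$ is empty for all $C_0$ small enough.

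The next step is combinatorial. The function $F_S$ is nonnegative, and $F_S(X) = 0$ precisely when every element of $S$ attains the maximum defining $\operatorname{trop}(F)(X)$. Such an $X$ exists if and only if $S \subseteq A_\rho$ for some cell $\rho$ of the dual subdivision ${\mathcal D}_v$. Because $V(\operatorname{trop}(F))$ is smooth, ${\mathcal D}_v$ is a unimodular simplicial complex, hence any subset of $A$ that lies in some $A_\rho$ is itself the vertex set of a cell of ${\mathcal D}_v$. Since $|S| = p+1 \geq 2$ and, by hypothesis, $S \neq A_\rho$ for any $\rho \in P_{\{0\}}$, we conclude $\{F_S = 0\} = \varnothing$, i.e., $F_S > 0$ everywhere.

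The crux is to upgrade pointwise positivity to $\inf F_S > 0$. Writing
\[
F_S(X) = \max_{(m, m') \in A \times S} \bigl[(v_m - v_{m'}) + (m - m') \cdot X\bigr]
\]
exhibits $F_S$ as a convex piecewise-linear function whose slope set $\{m - m' : m \in A, \, m' \in S\}$ contains $0$ (take $m = m'$). The linear program of minimizing $t$ subject to $(m - m') \cdot X + (v_m - v_{m'}) \leq t$ for every pair $(m, m')$ is therefore both primal- and dual-feasible, so by LP duality its optimum is attained at some $(X^*, t^*)$ with $t^* = F_S(X^*) = \inf F_S$. Since $F_S(X^*) > 0$ by the previous paragraph, any $C_0 < F_S(X^*)$ makes the set $\{X : F_S(X) < C_0\}$, and hence $D_{m_0, \dots, m_p}$, empty.

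The main technical obstacle is the attainment of $\inf F_S$. A convex piecewise-linear function bounded below need not attain its infimum without an argument, and here the recession cone of $F_S$ is genuinely nontrivial whenever $S$ is contained in a proper face of $\Delta$, so a naive compactness argument on ${\mathbb R}^{n+1}$ is unavailable. The LP-duality argument sidesteps this by exploiting the trivial containment $0 \in \{m - m' : m \in A, \, m' \in S\}$, which is forced by $S \subseteq A$.
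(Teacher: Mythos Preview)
Your argument is correct. Reducing the open condition ``$b_{m_i}(x)>0$ for all $i$'' to the single inequality $F_S(X)<C_0$, showing $F_S>0$ via the simplicial structure of $\mathcal D_v$, and then invoking LP attainment for the strict lower bound is clean and complete. The LP step is the right way to handle the non-compactness you flag: since the pair $(m,m')=(m',m')$ with $m'\in S$ gives the constraint $t\ge 0$, the primal is feasible and bounded, so the optimum is attained; equivalently, the dual is made feasible by putting unit mass on any diagonal pair, and strong duality applies.

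The paper proceeds differently. It fixes the affine subspace $H=\{v_{m_0}+m_0\!\cdot\! X=\cdots=v_{m_p}+m_p\!\cdot\! X\}$, argues (by the same simplicial observation you use) that no $v_{m_i}+m_i\!\cdot\! X$ can equal $\operatorname{trop}(F)$ on $H$, passes to a neighborhood $N$ of $H$ on which this persists, and then uses \emph{both} defining conditions of $D_{m_0,\dots,m_p}$: the inequalities $b_{m_i}>0$ force $\operatorname{Log}_R(D_{m_0,\dots,m_p})$ into a $C_0$-tube around $H$, while the vanishing conditions $b_m=0$ for $m\notin S$ force the tropical maximum to be attained inside $S$, contradicting the choice of $N$. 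Your route bypasses the neighborhood argument and the second batch of conditions entirely, replacing them by the quantitative bound $\inf F_S>0$. The gain is that you never have to confront the uniformity question hidden in the paper's passage from ``strict inequality on $H$'' to ``strict inequality on a tube around $H$'' (which is delicate because $H$ is unbounded); the LP attainment settles it in one stroke. The paper's version, on the other hand, stays closer to the geometric picture of the tropical hypersurface and makes visible \emph{which} monomial is dominant on the alleged region.
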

\begin{proof}
Let $H \subset O_{\{0\}}({\mathbb T})$ be the af\/f\/ine space def\/ined by \mbox{$v_{m_0}+m_0 \cdot X= \dots =v_{m_p}+m_p \cdot X$}.
First, we show that there exists a neighborhood $N$ of $H$ such that any of $v_{m_0}+m_0 \cdot X, \dots, v_{m_p}+m_p \cdot X$ do not coincide with $\operatorname{trop}(F)$ on $N$. Assume that there exists $X_0 \in H$ and $i \in \{1, \dots, p\}$ such that $v_{m_i}+m_i \cdot X_0 = \operatorname{trop}(F)(X_0)$. Then there exists $\rho' \in P_{\{0\}}$ such that \mbox{$X_0 \in \rho'$} and \mbox{$\{m_0, \dots, m_{p}\} \subset A_{\rho'}$}. Since $V(\operatorname{trop}(F))$ is smooth and locally coincides with the tropical hyperplane, there exists $\rho \in P_{\{0\}}$ such that $\rho' \prec \rho$ and $\{m_0, \dots, m_{p}\} = A_{\rho}$. This contradicts to the assumption. Hence, any of $v_{m_0}+m_0 \cdot X, \dots, v_{m_p}+m_p \cdot X$ do not coincide with $\operatorname{trop}(F)$ on~$H$. Then there exists a neighborhood $N$ of $H$ such that any of $v_{m_0}+m_0 \cdot X, \dots, v_{m_p}+m_p \cdot X$ do not coincide with $\operatorname{trop}(F)$ on~$N$.

Assume that $D_{m_0, \dots, m_p}$ is not empty. The dif\/ferences between the values of $v_{m_0}+m_0 \cdot X, \dots, v_{m_p}+m_p \cdot X$ are in the range of $\pm C_0$ on $\operatorname{Log}_R(D_{m_0, \dots, m_p}) \cap O_{\{0\}}({\mathbb T})$. Then the set $\operatorname{Log}_R(D_{m_0, \dots, m_p}) \cap O_{\{0\}}({\mathbb T})$ has to be in $N$ for a suf\/f\/iciently small constant $C_0$. The fact that any of $v_{m_0}+m_0 \cdot X, \dots, v_{m_p}+m_p \cdot X$ do not coincide with $\operatorname{trop}(F)$ on $N \supset \operatorname{Log}_R(D_{m_0, \dots, m_p}) \cap O_{\{0\}}({\mathbb T})$ contradicts to the def\/inition of $D_{m_0, \dots, m_p}$.
\end{proof}

\begin{Lemma}\label{lm:usig}
Let $\sigma \in {\mathcal F}'$ be a cone and $\mu_1, \mu_2 \in P_\sigma$ be cells.
Suppose that the constant $C_0$ is sufficiently small.
If $D_{\mu_1} \cap D_{\mu_2} \neq \varnothing$, then there exists $\mu \in P_\sigma$ such that $\mu \prec \mu_1, \mu_2$ and $D_{\mu_1} \cap D_{\mu_2} \subset D_\mu$.
\end{Lemma}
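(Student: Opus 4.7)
The plan is to reduce first to the case $\sigma=\{0\}$. For cells $\mu_i \in P_\sigma$ and their counterparts $\mu_i' \in P_{\{0\}}$ as in Definition~\ref{df:nbd}, the identity~\eqref{eq:D} gives $D_{\mu_i} = D_{\mu_i'} \cap X_{{\mathcal F}',\sigma}({\mathbb C})$. So if the lemma is proved for $\sigma=\{0\}$, producing a cell $\mu' \in P_{\{0\}}$ with $\mu' \prec \mu_1',\mu_2'$ and $D_{\mu_1'} \cap D_{\mu_2'} \subset D_{\mu'}$, then intersecting with $X_{{\mathcal F}',\sigma}({\mathbb C})$ and invoking Lemma~\ref{lm:a} to verify non-emptiness yields the desired $\mu \in P_\sigma$.

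Assume $\sigma=\{0\}$, and write $L_m(X) := v_m + m \cdot X$ and $T(X) := \operatorname{trop}(F)(X)$. A direct unfolding of Definition~\ref{df:nbd} shows that $X \in \widehat{D}_\mu$ is equivalent to $T(X) - L_m(X) \leq C_0$ for every $m \in A_\mu$ together with $T(X) - L_m(X) \geq C_0$ for every $m \in A \setminus A_\mu$; the second half forces the maximum defining $T(X)$ to be attained inside $A_\mu$. Applying this to $\mu_1$ and $\mu_2$ simultaneously, any $X \in \widehat{D}_{\mu_1} \cap \widehat{D}_{\mu_2}$ satisfies the very same inequalities with $A_{\mu_1} \cup A_{\mu_2}$ in place of $A_\mu$. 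Hence it suffices to exhibit a cell $\mu \in P_{\{0\}}$ with $\mu \prec \mu_1, \mu_2$ and $A_\mu = A_{\mu_1} \cup A_{\mu_2}$; then $X \in \widehat{D}_\mu$, and the relation $D_\bullet = \operatorname{Log}_R^{-1}(\widehat{D}_\bullet)$ upgrades this to $D_{\mu_1} \cap D_{\mu_2} \subset D_\mu$.

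I would take $\mu := \mu_1 \cap \mu_2$. For $C_0$ sufficiently small, a compactness argument inside $X_{{\mathcal F}'}({\mathbb T})$ places $\widehat{D}_{\mu_i}$ inside an arbitrarily small neighborhood of $\mu_i$, so $\widehat{D}_{\mu_1} \cap \widehat{D}_{\mu_2} \neq \varnothing$ forces $\mu_1 \cap \mu_2 \neq \varnothing$; the polyhedral-complex axioms then make $\mu$ a common face of $\mu_1$ and $\mu_2$. The identity $A_\mu = A_{\mu_1} \cup A_{\mu_2}$ is the main obstacle and the only place where smoothness of $V(\operatorname{trop}(F))$ is genuinely used: in a ${\mathbb Z}$-affine chart around any vertex of $\mu$, $V(\operatorname{trop}(F))$ is identified with the standard tropical hyperplane $\max\{0, Y_1, \dots, Y_{n+1}\}$, whose cells are in bijection with subsets $S \subset \{0, 1, \dots, n+1\}$ of size at least $2$ via $A_{\mu_S} = S$, and one computes directly in this model that $\mu_{S_1} \cap \mu_{S_2} = \mu_{S_1 \cup S_2}$, which translates back to the required identity and completes the proof.
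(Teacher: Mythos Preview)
Your argument is correct, but it follows a genuinely different route from the paper's. The paper never passes to $\widehat{D}_\mu$ or reformulates the defining inequalities in terms of $\operatorname{trop}(F)-L_m$; instead it introduces the auxiliary sets $D_{m_0,\dots,m_p}$ and observes that $D_{\mu_1'}\cap D_{\mu_2'}\subset D_{m_0,\dots,m_p}$ for $\{m_0,\dots,m_p\}=A_{\mu_1'}\cup A_{\mu_2'}$. Non-emptiness of this set then feeds directly into Lemma~\ref{lm:oth}, which already packages the smoothness hypothesis and produces the cell $\rho$ with $A_\rho=\{m_0,\dots,m_p\}$; the rest is bookkeeping with~\eqref{eq:D}. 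Your approach instead proves the existence of such a cell from scratch: a compactness argument in $X_{{\mathcal F}'}({\mathbb T})$ forces $\mu_1\cap\mu_2\neq\varnothing$, and then the local identification with the tropical hyperplane gives $A_{\mu_1\cap\mu_2}=A_{\mu_1}\cup A_{\mu_2}$. The paper's proof is shorter precisely because Lemma~\ref{lm:oth} has already done the combinatorial work; your proof is more self-contained and makes the role of smoothness visually explicit via the local model $\mu_{S_1}\cap\mu_{S_2}=\mu_{S_1\cup S_2}$. Both proofs gloss over the same closure subtlety (the passage from the open defining conditions of $D_\mu$ or $\widehat{D}_\mu$ to their closures), so neither is more rigorous than the other on that point.
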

\begin{proof}
Let $\mu_1', \mu_2' \in P_{\{0\}}$ be the cells such that
$\mu_1=\mu_1' \cap X_{{\mathcal F}',\sigma}({\mathbb T})$ and $\mu_2=\mu_2' \cap X_{{\mathcal F}',\sigma}({\mathbb T})$.
We set $\{m_0, \dots, m_{p}\}:=A_{\mu_1'} \cup A_{\mu_2'}$.
Here, we have $D_{\mu_1'} \cap D_{\mu_2'} \subset D_{m_0, \dots, m_p}$.
If $D_{\mu_1} \cap D_{\mu_2} \neq \varnothing$, the set $D_{\mu_1'} \cap D_{\mu_2'} \supset D_{\mu_1} \cap D_{\mu_2}$ is also nonempty.
Hence, we have $D_{m_0, \dots, m_p} \neq \varnothing$.
From Lemma~\ref{lm:oth}, there must exists a cell $\rho \in P_{\{0\}}$ such that $A_\rho=\{m_0, \dots, m_p\}$ and $D_{m_0, \dots, m_p}=D_\rho$.
We have $D_{\mu_1'} \cap D_{\mu_2'} \subset D_\rho$ and $\rho \prec \mu_1', \mu_2'$.
Then the cell $\mu:=\rho \cap X_{{\mathcal F}',\sigma}({\mathbb T})$ satisf\/ies $D_{\mu_1} \cap D_{\mu_2} \subset D_\mu$ and $\mu \prec \mu_1, \mu_2$.
\end{proof}

The aim of this section is to prove the following theorem.

\begin{Theorem}\label{th:sub}
Fix a sufficiently small constant $C_0$.
For a sufficiently large $R \in {\mathbb R}^{>0}$, the tropical localization $W_q$ and the family of subsets $\{D_\mu\}_{\mu \in P}$ of $X_{{\mathcal F}'}({\mathbb C})$ satisfy the following conditions:
\begin{enumerate}\itemsep=0pt
\item[$1.$] For any $q \in S_R^1$, the submanifold $W_q$ is isotopic to $V_q$ in $X_{{\mathcal F}'}({\mathbb C})$.
\item[$2.$] For any $q \in S_R^1$, one has $W_q \subset \bigcup_{\rho \in P_{\{0\}}} D_\rho$.
\item[$3.$] Let $\sigma \in {\mathcal F}'$ be a cone and $\mu \in P_\sigma$ be a cell.
Let further $\mu' \in P_{\{0\}}$ be the cell such that $\mu = \mu' \cap X_{{\mathcal F}',\sigma}({\mathbb T})$.
Assume that the dimension of $\sigma$ and $\mu'$ is $l$ and $k$, respectively $(l \leq k)$.
Let $(\tilde{x}_1, \dots, \tilde{x}_{n+1-l})$ be a standard coordinate with respect to $\mu$ $($see Section~{\rm \ref{sc:2.3})}.
Then, the defining equation of $W_q$ on $D_\mu \cap O_\sigma({\mathbb C})$ coincides with that of the $(n-k)$-dimensional tropically localized hyperplane in $(\tilde{x}_1, \dots, \tilde{x}_{n+1-k})$ and is independent of the coordinate $(\tilde{x}_{n+2-k}, \dots, \tilde{x}_{n+1-l})$.
\end{enumerate}
\end{Theorem}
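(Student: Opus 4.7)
I address the three conclusions in reverse order of difficulty. Parts~2 and~3 are structural statements about $\tilde{f}_q$ itself that follow from direct computation together with Lemma~\ref{lm:oth}, while Part~1 is the genuinely analytic statement requiring control of $V_q$ in the large-$R$ regime. Throughout I fix $\mu \in P_\sigma$ with associated $\mu' \in P_{\{0\}}$, enumerate $A_{\mu'} = \{m_0,\dots,m_{n+1-k}\}$, and work in the standard coordinates $(\tilde{x}_1,\dots,\tilde{x}_{n+1-l})$.

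\textbf{Part~3.} By the defining conditions of $\widehat{D}_{\mu'}$ every $b_m$ with $m \in A\setminus A_{\mu'}$ vanishes identically on $D_\mu$, so $\tilde{f}_q$ restricts there to $\sum_{i=0}^{n+1-k} b_{m_i}(x)\,q^{v_{m_i}}x^{m_i}$. Factoring out the pivot monomial $q^{v_{m_0}}x^{m_0}$ yields $\sum_{i=0}^{n+1-k} b_{m_i}(x)\,\tilde{x}_i$ with $\tilde{x}_0 \equiv 1$, and one must match each $b_{m_i}$ on $D_\mu$ with the coefficient prescribed by Definition~\ref{df:lochyp}. The factors in the product defining $b_{m_i}$ indexed by $j \in A_{\mu'}$ reproduce exactly the cutoffs of the tropically localized hyperplane in the variables $(\tilde{x}_1,\dots,\tilde{x}_{n+1-k})$, while the factors indexed by $j \in A\setminus A_{\mu'}$ equal $1$ because on $\widehat{D}_{\mu'}$ these monomials are dominated by $q^{v_{m_0}}x^{m_0}$ by at least $C_0$, so the argument of $b$ is at most $-C_0 < C_1$. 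Since $q^{v_{m_0}}x^{m_0}$ is invertible in the standard coordinates on $O_\sigma({\mathbb C})$, the vanishing of $\tilde{f}_q$ on $D_\mu \cap O_\sigma({\mathbb C})$ is equivalent to the vanishing of the $(n-k)$-dimensional tropically localized hyperplane in $(\tilde{x}_1,\dots,\tilde{x}_{n+1-k})$, which is manifestly independent of $(\tilde{x}_{n+2-k},\dots,\tilde{x}_{n+1-l})$ since those coordinates correspond only to monomials outside $A_{\mu'}$.

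\textbf{Part~2.} At any $x \in O_{\{0\}}({\mathbb C})$ let $m^\ast$ maximize $|q^{v_m}x^m|$; then every argument of $b$ in the product defining $b_{m^\ast}$ is nonpositive and hence at most $C_1$, so $b_{m^\ast}(x)=1$. Setting $S(x) := \{m \in A : b_m(x) > 0\}$, if $|S(x)| \geq 2$ then Lemma~\ref{lm:oth} applied to an enumeration of $S(x)$ forces $S(x) = A_\rho$ for some $\rho \in P_{\{0\}}$, so $x \in D_\rho$; if $|S(x)| = 1$ then $\tilde{f}_q(x)$ reduces to a single nonvanishing monomial and $x$ is not in the zero locus. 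The zero locus of $\tilde{f}_q$ in $O_{\{0\}}({\mathbb C})$ thus lies in $\bigcup_{\rho \in P_{\{0\}}} D_\rho$, and the inclusion persists under closure because this union is a finite union of closed sets in $X_{{\mathcal F}'}({\mathbb C})$.

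\textbf{Part~1 (main obstacle).} I would construct the straight-line homotopy $\phi_{q,s} := (1-s) f_q + s\,\tilde{f}_q$ for $s \in [0,1]$ and apply Ehresmann's fibration theorem to the total space $\{(s,x) : \phi_{q,s}(x) = 0\}$ once it is shown to be a smooth proper fibration over $[0,1]$. Writing $k_m(1/q) = q^{v_m}(c_m + O(R^{-1}))$ with $c_m \neq 0$ and invoking Part~3, on each $D_\mu \cap O_\sigma({\mathbb C})$ and after dividing by $q^{v_{m_0}}x^{m_0}$, both $f_q$ and $\tilde{f}_q$ become $O(R^{-\varepsilon})$ perturbations of Laurent polynomials of the form $\sum_i \alpha_i(\tilde{x})\,\tilde{x}_i$ in the torus variables, and so does any convex combination; the zero locus of such a polynomial is a smooth hypersurface wherever its coefficient vector is nonzero. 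The bulk of the work is to verify that the coefficient vector of $\phi_{q,s}$ does not vanish along its zero locus for any $s \in [0,1]$, uniformly in $q$ and in the cell $\mu$. The genuine challenge lies in the overlaps $D_{\mu_1}\cap D_{\mu_2}$: Lemma~\ref{lm:usig} produces a common finer cell $\mu \prec \mu_1,\mu_2$ whose standard coordinates control the overlap, and unimodularity of ${\mathcal D}_v$ ensures the transition functions between different standard charts are Laurent monomials of bounded complexity, which is what allows the local regularity estimates to glue into a uniform one. Once this uniform regularity is established, Ehresmann's theorem produces the required ambient isotopy between $V_q$ and $W_q$.
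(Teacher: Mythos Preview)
Your Parts~2 and~3 are correct and essentially match the paper's argument: Part~3 is exactly the $s=1$ specialization of the paper's interpolating family, and Part~2 is the same dominant-monomial observation (a single surviving term cannot vanish; two or more force $S(x)=A_\rho$ via Lemma~\ref{lm:oth}). One small imprecision in Part~3: on $\widehat D_{\mu'}$ a monomial $j\in A\setminus A_{\mu'}$ is dominated by \emph{some} $m'\in A_{\mu'}$ by at least $C_0$, not necessarily by $m_0$; combined with $|(v_{m_i}+m_iX)-(v_{m'}+m'X)|<C_0$ this still gives the needed inequality, so the conclusion stands.

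The genuine gap is in Part~1. The straight-line homotopy $\phi_{q,s}=(1-s)f_q+s\tilde f_q$ does not in general yield a locally trivial family. On $D_\mu$, after dividing by the pivot, the coefficient of $\tilde x_i$ in $\phi_{q,s}$ is $(1-s)c_{m_i}+s\,b_{m_i}(\tilde x)$ to leading order; on the subregion where $|q^{v_{m_i}}x^{m_i}|$ is maximal one has $b_{m_i}\equiv 1$, so this coefficient is $(1-s)c_{m_i}+s$, which vanishes at $s=c_{m_i}/(c_{m_i}-1)\in(0,1)$ whenever $c_{m_i}$ is a negative real number. At that parameter the Jacobian control you invoke collapses, and more basically the containment $V_{q,s}\subset\bigcup_\rho D_\rho$ (needed throughout the isotopy, not only at $s=1$) fails on the single-monomial regions $D_m$ for the same reason. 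Concretely, for $F=t^{-1}-x$ one checks that the zero locus of $\phi_{q,1/2}$ is empty for large $R$, so the family cannot be a fibration. Thus the verification you flag as ``the bulk of the work'' is not just laborious but actually false for linear interpolation.

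The paper fixes this by interpolating the leading coefficients multiplicatively: it takes
\[
\tilde f_{q,s}(x)=\sum_{m\in A}\bigl[(1-d(s))+d(s)\,b_m(x)\bigr]\,c_m^{\,1-d(s)}\,q^{v_m}x^m+(1-d(s))\Bigl(f_q-\sum_{m\in A} c_m q^{v_m}x^m\Bigr)
\]
with $d$ a smooth step function. The dominant coefficient on each $D_{\mu,m_i}$ is then $c_{m_i}^{\,1-d(s)}\neq 0$ for all $s$, the lower-order tail carries the prefactor $1-d(s)$ and stays uniformly small, the containment in $\bigcup_\rho D_\rho$ holds for every $s$, and the $2\times 2$ Jacobian in the polar coordinates of the dominant $\tilde x_i$ is nondegenerate for large $R$. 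Ehresmann then applies exactly as in your outline. Your overall architecture (standard charts, Lemma~\ref{lm:usig} for overlaps, Ehresmann) is the paper's; only the interpolating family must be replaced.
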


The outline of the proof of Theorem~\ref{th:sub} is as follows. In order to show the condition~1, we construct an isotopy $\{V_{q,s}\}_{s \in [0,1]}$ which connects $V_q$ and $W_q$. For $F=\sum\limits_{m \in A} k_m x^m$, we set $k_m=\sum\limits_{i \in {\mathbb Z}}c_{mi} t^i \in K$ $(c_{mi} \in {\mathbb C})$ and $c_m:=c_{m,-v_m}$. Let $d(s)$ be a real valued monotone $C^{\infty}$ function on ${\mathbb R}$ which has $1$ on $\{s \geq 2/3\}$ and $0$ on $\{s \leq 1/3\}$.
The graph of $d(s)$ is shown in Fig.~\ref{fg:cutoff2}.
For each $s \in [0,1]$, we def\/ine the functions $b_{m,s} \colon O_{\{0\}}({\mathbb C}) \to {\mathbb R}$ and $\tilde{f}_{q,s} \colon O_{\{0\}}({\mathbb C}) \to {\mathbb C}$ by
\begin{gather*}
b_{m,s}(x) :=(1-d(s))+d(s)b_m(x),\\
\tilde{f}_{q,s}(x) :=\sum_{m \in A} b_{m,s}(x){c_{m}}^{(1-d(s))}q^{v_m}x^m+(1-d(s)) \left\{ f_q-\sum_{m \in A}{c_{m}}q^{v_m}x^m \right\},
\end{gather*}
where the branch of $c_m^{(1-d(s))}$ is determined by $0 \leq \arg(c_m) < 2 \pi$.
Let $V_{q,s}$ be the closure of $\{x \in O_{\{0\}}({\mathbb C})\,|\, \tilde{f}_{q,s}(x)=0\}$ in $X_{{\mathcal F}'}({\mathbb C})$.
Then we have $\tilde{f}_{q,0}=f_q, \tilde{f}_{q,1}=\tilde{f}_q$ and $V_{q,0}=V_q$, $V_{q,1}=W_q$.

\begin{figure}[t]\centering
\includegraphics[scale=0.4]{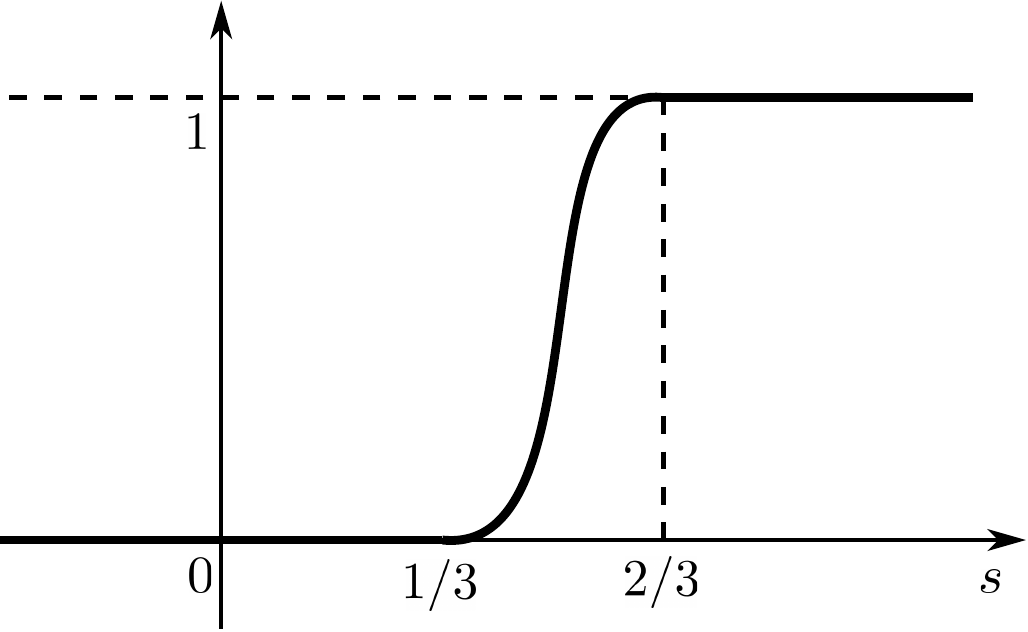}
\caption{The graph of the function $d$.}\label{fg:cutoff2}
\end{figure}

First, we check that $V_{q,s}$ is contained in $\bigcup_{\rho \in P_{\{0\}}} D_\rho$ for any $q \in S_R^1$ and $s \in [0,1]$.
Then, we set $q=R\exp(\sqrt{-1} \theta)$ and consider the projection $p \colon X_{{\mathcal F}'}({\mathbb C}) \times (-\epsilon, 2\pi+\epsilon) \times (0,1) \to (-\epsilon, 2\pi+\epsilon) \times (0,1)$ given by
\begin{gather*}
(x,\theta,s) \mapsto (\theta, s),
\end{gather*}
where $\epsilon \in {\mathbb R}$ is a small constant such that $0<\epsilon \ll 1$.
Let $Y$ be the subset of $X_{{\mathcal F}'}({\mathbb C}) \times (-\epsilon, 2\pi+\epsilon) \times (0,1)$ def\/ined by
\begin{gather*}
Y := \big\{ (x, \theta, s) \in X_{{\mathcal F}'}({\mathbb C}) \times (-\epsilon, 2\pi+\epsilon) \times (0,1) \,|\, \tilde{f}_{q,s}(x)=0 \big\}.
\end{gather*}
We use the following theorem.

\begin{Theorem}[Ehresmann's f\/ibration theorem]\label{th:ehr}
Let $f \colon E \to M$ be a $C^\infty$ map between smooth manifolds. If the map $f$ is a proper submersion, then the map $f$ is a locally trivial fibration.
\end{Theorem}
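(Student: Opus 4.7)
The plan is to establish local triviality: for each $p \in M$ I construct an open neighborhood $U \ni p$ and a diffeomorphism $\phi \colon U \times F_p \to f^{-1}(U)$, with $F_p := f^{-1}(p)$, commuting with the projections to $U$. Since the statement is local on $M$, I first choose a chart around $p$ identifying a neighborhood of $p$ with an open cube $U \cong (-1,1)^m$ carrying $p$ to the origin, where $m = \dim M$; note that $F_p$ is compact by properness of $f$.

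Next, for each coordinate vector field $\partial/\partial x_i$ on $U$ I produce a smooth vector field $\widetilde{X}_i$ on $f^{-1}(U)$ satisfying $df(\widetilde{X}_i) = \partial/\partial x_i$. Because $f$ is a submersion, such lifts exist locally (in submersion slice coordinates, the lift is literally $\partial/\partial x_i$ on the base factor), and I patch the local lifts via a partition of unity on $f^{-1}(U)$: a convex combination of lifts is again a lift because the partition of unity has total mass one.

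The decisive step is integration, and this is where properness enters. Let $\Phi^i_t$ denote the flow of $\widetilde{X}_i$. Since $\widetilde{X}_i$ projects to $\partial/\partial x_i$, any integral curve starting at $y \in F_p$ has image under $f$ lying in a compact segment $L \subset \overline{U}$. Properness of $f$ makes $f^{-1}(L)$ compact, and the standard escape lemma then guarantees that the integral curve cannot leave $f^{-1}(L)$ within time $(-1,1)$, so $\Phi^i_t$ is defined on $F_p$ for every $t \in (-1,1)$. I then set
\[
\phi \colon U \times F_p \to f^{-1}(U), \qquad \phi((t_1, \dots, t_m), y) := \Phi^m_{t_m} \circ \cdots \circ \Phi^1_{t_1}(y).
\]
This is smooth; the identity $f \circ \phi((t_1, \dots, t_m), y) = (t_1, \dots, t_m)$ follows inductively from $df(\widetilde{X}_i) = \partial/\partial x_i$; and a smooth inverse is given by $z \mapsto ((t_1, \dots, t_m), (\Phi^1_{-t_1} \circ \cdots \circ \Phi^m_{-t_m})(z))$ with $(t_1, \dots, t_m) := f(z)$, where the cancellations $\phi \circ \phi^{-1} = \mathrm{id}$ follow inductively from the group law $\Phi^i_t \circ \Phi^i_{-t} = \mathrm{id}$.

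The main obstacle throughout is completeness of the lifted vector fields: without properness the integral curves of $\widetilde{X}_i$ could escape to infinity in finite time within the fiber direction, and no trivialization of this form could be constructed. Compactness of $f^{-1}$ of compact subsets of $U$, which is exactly what properness supplies, is precisely what rescues the argument and closes the proof.
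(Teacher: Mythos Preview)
Your proof is correct and is essentially the standard argument for Ehresmann's theorem: lift the coordinate vector fields on a chart in the base to vector fields on the total space using the submersion hypothesis and a partition of unity, then use properness to guarantee that the flows of these lifts exist for the required times, and assemble the flows into a trivialization. The telescoping verification that $\phi$ and the proposed inverse are mutual inverses is handled correctly by the ordering you chose.

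However, note that the paper does not actually prove this theorem. Ehresmann's fibration theorem is merely \emph{quoted} (as Theorem~\ref{th:ehr}) as a classical black box and then applied in the proof of Theorem~\ref{th:sub} to conclude that the projection $p|_Y \colon Y \to (-\epsilon, 2\pi+\epsilon) \times (0,1)$ is a locally trivial fibration once it has been checked to be a proper submersion. So there is no ``paper's own proof'' to compare against: you have supplied a proof where the paper supplies none. Your argument is the textbook one and would be perfectly acceptable as a supplement; the only minor point worth tightening is to say explicitly that the completeness argument applies not just to integral curves starting in $F_p$ but inductively to curves starting at $\Phi^{i-1}_{t_{i-1}} \circ \cdots \circ \Phi^1_{t_1}(y)$, since those project to compact boxes $[0,t_1]\times\cdots\times[0,t_i]\times\{0\}$ inside $U$ whose $f$-preimages are again compact.
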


We check that the function $\tilde{f}_{q,s}$ has $0$ as a regular value on each $D_\mu \cap O_\sigma({\mathbb C})$ for any $q \in S_R^1$ and $s \in [0, 1]$.
Then, it turns out that the restriction of $p$ to $Y$ is a submersion. In addition, we can easily see that $p|_Y$ is proper.
From Theorem~\ref{th:ehr}, we can conclude that the family of submanifolds~$\{V_{q,s}\}_{s \in [0,1]}$ gives an isotopy between $V_q$ and $W_q$.
The condition~3 can be shown by a simple calculation.

\begin{proof}[Proof of Theorem~\ref{th:sub}] We set $T:=\bigcup_{\rho \in P_{\{0\}}} D_{\rho}$. First, we show that $V_{q,s}$ is contained in $T$ for any $q \in S_R^1$ and $s \in [0,1]$. Since we have
\begin{gather*}
O_{\{0\}}({\mathbb C}) = \Bigg( \bigcup_{\substack{\{m_0, \dots m_p\} \subset A \\ p \in {\mathbb Z}^{\geq 0}}} D_{m_0, \dots, m_p} \Bigg) \cap O_{\{0\}}({\mathbb C}),
\end{gather*}
it follows from Lemma~\ref{lm:oth} that it is enough to check that the function $\tilde{f}_{q,s}$ can not be $0$ on $D_m \cap O_{\{0\}}({\mathbb C})$ for any $m \in A$.
The dominant term of $\tilde{f}_{q,s}$ on $D_m \cap O_{\{0\}}({\mathbb C})$ is only ${c_{m}}^{(1-d(s))}q^{v_m}x^m$ and we have
\begin{gather*}
b_{m',s}(x) =
\begin{cases}
1-d(s), & m' \neq m, \\
 1 & m' =m.
\end{cases}
\end{gather*}
Hence, the function $\tilde{f}_{q,s}$ can be written on $D_m \cap O_{\{0\}}({\mathbb C})$ as
\begin{gather*}
{c_{m}}^{(1-d(s))}q^{v_m}x^m+(1-d(s)) \bigg\{\sum_{p} h_pq^{i_p}x^{j_p} \bigg\},
\end{gather*}
where $h_p \in {\mathbb C}$, $i_p \in {\mathbb Z}$, $j_p \in A$ and each term $h_pq^{i_p}x^{j_p}$ denotes other monomial which is not dominant on~$D_m$, i.e., $|q^{i_p}x^{j_p}|/|q^{v_m}x^m| \leq R^{-C_0}$. (Each index $p$ satisf\/ies that either $j_p \neq m$ or $j_p = m$ and $i_p < v_m$.) Hence, for suf\/f\/iciently large~$R$, the function $\tilde{f}_{q,s}$ can not be $0$ on $D_m \cap O_{\{0\}}({\mathbb C})$. Then we have $V_{q,s} \subset T$ for all $q \in S_R^1$ and $s \in [0,1]$. In particular, the condition~2 holds.

Next, we show that the projection $\left. p \right|_Y$ is a proper submersion. For $\mu \in P_\sigma$, let $\mu' \in P_{\{0\}}$ be the cell such that $\mu = \mu' \cap X_{{\mathcal F}',\sigma}$. We def\/ine $m_0, \dots, m_{n+1-k}$ by $\{m_0, \dots, m_{n+1-k}\} =A_{\mu'}$ (the set $A_{\mu'}$ is def\/ined in~\eqref{eq:A}). For any $m \in A \setminus A_{\mu'}$, there exists $m_i \in A_{\mu'}$ such that $|q^{v_{m}}x^{m}|/|{q^{v_{m_i}}x^{m_i}}| \leq R^{-C_0}$ on $D_{\mu'} \cap O_{\{0\}}({\mathbb C})$. Then we have $b\big(\log_R|q^{v_{m}}x^{m}|-\log_R|{q^{v_{m_i}}x^{m_i}}|\big) \equiv 1$ and $b\big(\log_R|q^{v_{m_i}}x^{m_i}|-\log_R|{q^{v_{m}}x^{m}}|\big) \equiv 0$. Therefore, we have
\begin{gather*}
 b_{m,s} |_{D_{\mu'} \cap O_{\{0\}}({\mathbb C})}(x)\\
 \qquad{} =
\begin{cases}
(1-d(s))+d(s)\prod\limits_{i=0}^{n+1-k} b \big( \log_R|q^{v_{m_i}}x^{m_i}|-\log_R|{q^{v_{m}}x^{m}}| \big), & m \in A_{\mu'}, \\
1-d(s) &\mathrm{otherwise,}
\end{cases}
\end{gather*}
and
\begin{gather}
 \tilde{f}_{q,s} \big|_{D_{\mu'} \cap O_{\{0\}}({\mathbb C})}(x)= \sum_{i=0}^{n+1-k} b_{m_i,s}(x)c_{m_i}^{(1-d(s))}q^{v_{m_i}}x^{m_i}\nonumber\\
\hphantom{\tilde{f}_{q,s} \big|_{D_{\mu'} \cap O_{\{0\}}({\mathbb C})}(x)=}{}
+(1-d(s))\bigg\{ f_{q}-\sum_{m \in A}{c_{m}}q^{v_{m}}x^{m}+\sum_{m \in A \setminus A_{\mu'}}{c_{m}}^{(1-d(s))}q^{v_{m}}x^{m}\bigg\}.\label{eq:a}
\end{gather}
Let $\tau \in {\mathcal F}'$ be an $(n+1)$-dimensional cone having $\sigma$ as its face. Let further $e_1, \dots, e_{n+1} \in {\mathbb Z}^{n+1}$ be the primitive generators of $\tau^\vee$. We rearrange $e_1, \dots, e_{n+1}$ if necessary, and set $y_i:=x^{e_i}$ $(i=1,\dots, n+1)$ so that the set of functions $(y_1, \dots, y_{n+1})$ forms a coordinate system on $U_\tau({\mathbb C}) \cong {\mathbb C}^{n+1}$ such that $y_{n+2-l}= \cdots =y_{n+1}=0$ on $O_\sigma({\mathbb C})$. Let $(\tilde{x}_1, \dots, \tilde{x}_{n+1-l})$ be a standard coordinate with respect to $\mu$ such that $\tilde{x}_i:=q^{v_{m_i}}x^{m_i}/q^{v_{m_0}}x^{m_0}$ for $i=1, \dots, n+1-k$. Then, the set of functions $(\tilde{x}_1, \dots, \tilde{x}_{n+1-l}, y_{n+2-l}, \dots, y_{n+1})$ forms a coordinate system on $\bigcup_{\sigma' \prec \sigma} O_{\sigma'}({\mathbb C})$. We def\/ine the functions $b_{m_i,\mu} \colon O_{\{0\}}({\mathbb C}) \to {\mathbb R}$ by
\begin{gather*}
b_{m_0,\mu}(\tilde{x}) :=\prod_{j=1}^{n+1-k}b(\log_R|\tilde{x}_j|),\\
b_{m_i,\mu}(\tilde{x}) :=b(-\log_R|\tilde{x}_i|) \prod_{j=1}^{n+1-k}b(\log_R|\tilde{x}_j|-\log_R|\tilde{x}_i|)\qquad \mathrm{for} \ \ i=1, \dots, n+1-k,
\end{gather*}
and set $b_{m_i,\mu,s}(\tilde{x}) := (1-d(s))+d(s) b_{m_i,\mu}(\tilde{x})$.
In the coordinate system $(\tilde{x}_1, \dots, \tilde{x}_{n+1-l}, y_{n+2-l}$, $\dots, y_{n+1})$, we divide \eqref{eq:a} by $q^{m_0}x^{m_0}$ to obtain
\begin{gather*}
\frac{\tilde{f}_{q,s} \big|_{D_{\mu'} \cap O_{\{0\}}({\mathbb C})} }{q^{m_0}x^{m_0}}=
b_{m_0,\mu,s}(\tilde{x})c_{m_0}^{(1-d(s))}\!+\!\sum_{i=1}^{n+1-k} \!\!b_{m_i,\mu,s}(\tilde{x})c_{m_i}^{(1-d(s))}\tilde{x}_i
+(1-d(s))\{ \mathrm{other\ terms} \}.
\end{gather*}
Notice that other terms are not dominant on $D_{\mu'}$. Hence, we may assume that the subset $V_{q,s}$ is def\/ined on $D_\mu \cap O_\sigma({\mathbb C})$ by
\begin{gather}\label{eq:b}
b_{m_0,\mu,s}(\tilde{x})c_{m_0}^{(1-d(s))}+\sum_{i=1}^{n+1-k} b_{m_i,\mu,s}(\tilde{x})c_{m_i}^{(1-d(s))}\tilde{x}_i
+(1-d(s))\bigg\{ \sum_{p} h_p q^{i_p}\tilde{x}^{j_p} \bigg\}=0,
\end{gather}
where $h_p \in {\mathbb C}$ and terms $\sum_{p} h_p q^{i_p}\tilde{x}^{j_p}$ denote other terms which are not dominant on $D_\mu$. We have $\big|q^{i_p}\tilde{x}^{j_p}\big| \leq R^{-C_0}$ and $\big|q^{i_p}\tilde{x}^{j_p}\big|/|\tilde{x}_{i}| \leq R^{-C_0}$ for all $p$. Let $G(\tilde{x}_1, \dots, \tilde{x}_{n+1-l})$ denote the left-hand side of~\eqref{eq:b}. We show that $G(\tilde{x}_1, \dots, \tilde{x}_{n+1-l})$ has $0 \in {\mathbb C}$ as a regular value on $D_\mu \cap O_\sigma({\mathbb C})$.

We def\/ine the subset $D_{\mu,m_i}:=\{ x \in D_{\mu} \,|\, |q^{v_{m_i}}x^{m_i}|/|q^{v_{m_j}}x^{m_j}| \geq1\ (j=0, \dots, n+1-k) \}$ for $i=0, \dots, n+1-k$. For any $x \in D_\mu$, there exists $i \in \{1, \dots n+1-k\}$ such that $|q^{v_{m_i}}x^{m_i}| \geq |q^{v_{m_j}}x^{m_j}|$ for any $j \in \{1, \dots, n+1-k\}$.
Then we have $D_{\mu} = \bigcup_{i=0}^{n+1-k} D_{\mu,m_i}$. We have only to show that the Jacobian matrix of $G(\tilde{x}_1, \dots, \tilde{x}_{n+1-l})$ has the maximal rank on $D_{\mu,{m_1}}$. On $D_{\mu,{m_1}}$, we have $b_{m_1,\mu,s}(\tilde{x}) \equiv 1$. We set $\tilde{x}_i=r_i\exp(\sqrt{-1} \theta_i)\ (r_i \in {\mathbb R}^{\geq0}, \theta_i \in [0,2\pi])$ and let $M$ be a~$2 \times 2$ matrix def\/ined by
\begin{gather*}
M:=\begin{pmatrix}
\dfrac{\partial}{\partial r_{1}} \operatorname{Re}(G) &\dfrac{\partial}{\partial \theta_{1}} \operatorname{Re}(G)\vspace{1mm}\\
\dfrac{\partial}{\partial r_{1}} \operatorname{Im}(G) &\dfrac{\partial}{\partial \theta_{1}} \operatorname{Im}(G)
\end{pmatrix}.
\end{gather*}
We can show $\det(M) \neq 0$ for a suf\/f\/iciently large $R$ by the concrete calculation. Hence, the subsets $V_{q,s}$ and $Y$ is smooth submanifold in $X_{{\mathcal F}'}({\mathbb C})$ and $\{X_{{\mathcal F}'}({\mathbb C}) \times (-\epsilon, 2\pi+\epsilon) \times (0,1)\}$ respectively. Moreover, it turns out that the projection $p|_Y \colon Y \to (-\epsilon, 2\pi+\epsilon) \times (0,1)$ is a~sub\-mersion.

In addition, for any compact subset $C \subset (-\epsilon, 2\pi+\epsilon) \times (0,1)$, the inverse image $(p|_Y)^{-1}(C) \subset Y$ coincides with $\{(x, \theta, s) \in X_{{\mathcal F}'}({\mathbb C}) \times C \,|\, \tilde{f}_{q,s}(x)=0\}$. Then the set $(p|_Y)^{-1}(C)$ is compact and the map $p|_Y$ is proper.
Hence, it turns out from Theorem~\ref{th:ehr} that the map $p|_Y$ has a structure of a f\/iber bundle with the f\/iber $V_{R,1}=W_{q=R}=:W_R$. Therefore, the family of submanifolds $\{V_{q,s}\}_{s \in [0,1]}$ gives an isotopy and the condition~1 holds.

Finally, we check the condition~3. In~\eqref{eq:b}, we set $s=1$ to obtain
\begin{gather*}
\prod_{j=1}^{n+1-k}b(\log_R|\tilde{x}_j|)+\sum_{i=1}^{n+1-k}\bigg\{b(-\log_R|\tilde{x}_i|) \prod_{j=1}^{n+1-k}b(\log_R|\tilde{x}_j|-\log_R|\tilde{x}_i|) \bigg\}\tilde{x}_i=0.
\end{gather*}
This coincides with the def\/ining function of the $(n-k)$-dimensional tropically localized hyperplane in $(\tilde{x}_1, \dots, \tilde{x}_{n+1-k})$ and the left-hand side is independent of the values of $\tilde{x}_{n+2-k}, \dots$, $\tilde{x}_{n+1-l}$.
Hence, the condition~3 holds.
\end{proof}

\section{Monodromy transformations}\label{sc:4}

We use the same notation as in Section~\ref{sc:3} and keep the assumption that $V(\operatorname{trop}(F))$ is smooth. We set $W_R:=W_{q=R}$. Let $\{\psi_{q=R\exp(\sqrt{-1}\theta)} \colon W_R \to W_q\}_{\theta \in [0,2\pi]}$ be a family of homeomorphisms which depends on $\theta$ continuously. It is clear that the map $\psi_{q=R\exp(2 \pi \sqrt{-1})} \colon W_R \to W_R$ gives the monodromy transformation of $\{V_q\}_{q \in S_R^1}$ under the identif\/ication $W_R \cong V_R$. Hence, it is suf\/f\/icient to construct a monodromy transformation of $\{W_q\}_{q \in S_R^1}$ in order to get that of $\{V_q\}_{q \in S_R^1}$.

\begin{Proposition}\label{pr:phi} There exists a continuous map $\phi \colon \operatorname{Log}_R(W_R) \to V(\operatorname{trop}(F))$ satisfying the following condition:
\begin{itemize}\itemsep=0pt
 \item[$(\ast)$] $\phi(\operatorname{Log}_R(W_R) \cap \widehat{D}_{\rho} \cap O_\sigma({\mathbb T})) \subset \rho \cap O_\sigma({\mathbb T})$ for any $\sigma \in {\mathcal F}'$ and $\rho \in P_\sigma$.
\end{itemize}
Moreover, such maps are unique up to homotopy.
\end{Proposition}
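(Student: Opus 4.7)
The plan is to construct $\phi$ as the restriction to $\operatorname{Log}_R(W_R)$ of a cell-preserving retraction $\Phi \colon \bigcup_{\mu \in P} \widehat{D}_\mu \to V(\operatorname{trop}(F))$ with $\Phi(\widehat{D}_\mu \cap O_\sigma(\mathbb{T})) \subset \mu \cap O_\sigma(\mathbb{T})$ for every $\sigma \in \mathcal{F}'$ and $\mu \in P_\sigma$. By Theorem \ref{th:sub}(2) together with the lemma following Lemma \ref{lm:a}, we have $\operatorname{Log}_R(W_R) \subset \bigcup_{\sigma}\bigcup_{\mu \in P_\sigma}(\widehat{D}_\mu \cap O_\sigma(\mathbb{T}))$, so such a $\Phi$ will yield the required $\phi$.

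For the local model, fix $\sigma \in \mathcal{F}'$ and $\mu \in P_\sigma$ with $\dim \sigma = l$ and $\dim \mu' = k$. Standard coordinates $(\widetilde X_1,\dots,\widetilde X_{n+1-l})$ with respect to $\mu$ identify $O_\sigma(\mathbb{T}) \cong \mathbb{R}^{n+1-l}$ in such a way that $\mu$ corresponds (in a neighborhood) to the affine subspace $\{\widetilde X_1 = \cdots = \widetilde X_{n+1-k} = 0\}$ and $\widehat D_\mu \cap O_\sigma(\mathbb{T})$ is a box transverse to $\mu$. Define a local retraction $\Phi_\mu$ on $\widehat D_\mu \cap O_\sigma(\mathbb{T})$ by zeroing out the transverse coordinates, i.e.\ $(\widetilde X_1,\dots,\widetilde X_{n+1-l}) \mapsto (0,\dots,0,\widetilde X_{n+2-k},\dots,\widetilde X_{n+1-l})$. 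The image lies in $\mu \cap O_\sigma(\mathbb{T})$, and by Theorem \ref{th:sub}(3) this projection is well-adapted to the local structure of $W_R$.

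To glue the $\Phi_\mu$ into a global $\Phi$, choose a partition of unity $\{\chi_\mu\}$ subordinate to a slight shrinking of the cover $\{\widehat D_\mu\}_{\mu \in P}$ and form $\Phi(x) := \sum_\mu \chi_\mu(x)\,\Phi_\mu(x)$, where the sum is taken via the affine structure on the ambient $O_\sigma(\mathbb{T}) \cong \mathbb{R}^{n+1-l}$. Lemma \ref{lm:usig} is the key compatibility: whenever $x \in \widehat D_{\mu_1} \cap \widehat D_{\mu_2}$ with $\mu_1,\mu_2 \in P_\sigma$, there exists a common face $\mu \prec \mu_1, \mu_2$ with $x \in \widehat D_\mu$, so the values $\Phi_{\mu_i}(x)$ all lie in a single cell $\mu' \subset \mu_1 \cap \mu_2$ of $V(\operatorname{trop}(F)) \cap O_\sigma(\mathbb{T})$. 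Since cells are convex polyhedra within each stratum, the convex-combination formula gives a well-defined continuous map whose image at $x \in \widehat D_\rho \cap O_\sigma$ lies in $\rho \cap O_\sigma$. Continuity across the toric strata follows from \eqref{eq:D}, which ensures $\widehat D_\mu$ for $\mu \in P_\sigma$ is precisely the closed-stratum trace of $\widehat D_{\mu'}$.

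For uniqueness up to homotopy, suppose $\phi_0,\phi_1$ both satisfy $(\ast)$. For $x \in \operatorname{Log}_R(W_R) \cap \widehat D_\rho \cap O_\sigma(\mathbb{T})$, both $\phi_0(x)$ and $\phi_1(x)$ lie in the convex polyhedron $\rho \cap O_\sigma(\mathbb{T})$, so the straight-line homotopy $H_t(x) := (1-t)\phi_0(x) + t\phi_1(x)$ stays in $\rho \cap O_\sigma(\mathbb{T})$. On overlaps $\widehat D_{\rho_1} \cap \widehat D_{\rho_2}$, Lemma \ref{lm:usig} again places both endpoints in a common face, so the homotopy is globally well-defined and continuous. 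The main obstacle is the existence step: producing a continuous global $\Phi$ from the local projections $\Phi_\mu$ while preserving the cell-wise image condition. This hinges entirely on the combinatorial compatibility supplied by Lemma \ref{lm:usig} together with convexity of the cells of $V(\operatorname{trop}(F)) \cap O_\sigma(\mathbb{T})$; uniqueness up to homotopy is then comparatively routine.
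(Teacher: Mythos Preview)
Your uniqueness argument via the straight-line homotopy is exactly what the paper does and is correct. The existence argument, however, has a genuine gap in the gluing step.

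The sentence ``so the values $\Phi_{\mu_i}(x)$ all lie in a single cell $\mu' \subset \mu_1 \cap \mu_2$'' does not follow from Lemma~\ref{lm:usig}. That lemma only tells you that $x$ itself lies in $\widehat D_\mu$ for some common face $\mu \prec \mu_1,\mu_2$; it says nothing about where the \emph{projections} $\Phi_{\mu_i}(x)$ land. In fact your $\Phi_{\mu_1}(x)$, being the projection of $x$ onto the affine span of $\mu_1$ in standard coordinates, lies in $\mu_1$ but in general \emph{not} in the face $\mu$. Take the simplest case: $\mu$ a vertex with two edges $\mu_1,\mu_2$ emanating from it. For $x \in \widehat D_\mu$, the point $\Phi_{\mu_1}(x)$ is a nearby point on $\mu_1$ and $\Phi_{\mu_2}(x)$ a nearby point on $\mu_2$, neither equal to the vertex. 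A convex combination of these two points lies off $V(\operatorname{trop}(F))$ altogether, since $\mu_1 \cup \mu_2$ is not convex. Thus your $\Phi$ is not even a map into $V(\operatorname{trop}(F))$, let alone one satisfying~$(\ast)$.

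The paper avoids this by building the maps $\phi_\rho$ inductively in $\dim\rho$: for a vertex it takes the constant map, and for a $k$-cell $\rho$ it extends the already constructed maps on the overlaps $\widehat D_\mu \cap \widehat D_\rho$ (for faces $\mu \prec \rho$) to a continuous map into the convex cell $\rho$. It is this inductive extension---forcing agreement with lower faces first---that guarantees the face-preserving property; a naive partition-of-unity average of independent local projections cannot achieve it. If you try to repair your approach by arranging that on $\widehat D_\rho$ only $\chi_\rho$ is nonzero, you will find that continuity forces you back to essentially the same induction.
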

\begin{proof}
For each cell $\rho \in P$, we construct a continuous map $\phi_\rho \colon \operatorname{Log}_R(W_R) \cap \widehat{D}_{\rho} \to V(\operatorname{trop}(F))$ satisfying following conditions:
\begin{enumerate}\itemsep=0pt
\item[(i)] $\phi_{\rho}(\operatorname{Log}_R(W_R) \cap \widehat{D}_\rho \cap O_\sigma({\mathbb T})) \subset \rho \cap O_\sigma({\mathbb T})$, where $\sigma \in {\mathcal F}'$ is a cone such that $\rho \in P_\sigma$. 
\item[(ii)] For any face $\mu \prec \rho$, the map $\phi_\rho$ coincides with $\phi_\mu$ on $\operatorname{Log}_R(W_R) \cap \widehat{D}_\rho \cap \widehat{D}_\mu$. 
\end{enumerate}
We construct $\phi_\rho$ in an ascending order of $\dim \rho$ as follows. For each vertex $\rho \in P$, we set $\phi_\rho$ as a~constant map from $\operatorname{Log}_R(W_R) \cap \widehat{D}_\rho$ to $\rho$. For each $1$-cell $\rho$, let $\nu_0$ and $\nu_1$ be the endpoints of~$\rho$. We set each $\phi_\rho$ as a continuous map to $\rho$ so that $\phi_\rho$ coincides with the constant map to $\nu_i$ on $\operatorname{Log}_R(W_R) \cap \widehat{D}_\rho \cap \widehat{D}_{\nu_i}$ and satisf\/ies the condition~(i). Assume that we have constructed $\phi_\rho$ for all cells whose dimensions are lower than $k-1$. For each $k$-cell $\rho$, we def\/ine $\phi_\rho$ as a continuous map to $\rho$ so that $\phi_\rho$ coincides with $\phi_\mu$ on $\operatorname{Log}_R(W_R) \cap \widehat{D}_\rho \cap \widehat{D}_{\mu}$ for any face $\mu$ of $\rho$ and satisf\/ies the condition~(i). In this way, we can construct a family of maps $\{\phi_\rho\}_{\rho \in P}$ such that each map~$\phi_\rho$ satisf\/ies the condition~(i) and~(ii).

\begin{Example} Consider the polynomial $F=1+x_1+x_2$. Fig.~\ref{fg:phi} shows $V(\operatorname{trop}(F))$ and $\operatorname{Log}_R(W_R)$. Let~$\nu$ denote the center vertex of $V(\operatorname{trop}(F))$. The region colored gray denotes $\operatorname{Log}_R(W_R) \cap \widehat{D}_\nu$. The map $\phi_\nu \colon \operatorname{Log}_R(W_R) \cap \widehat{D}_\nu \to V(\operatorname{trop}(F))$ is the constant map to $\nu$ as shown in Fig.~\ref{fg:phi2}.
\begin{figure}[t]\centering
\begin{minipage}[t]{0.47\hsize}\centering
\includegraphics[width=40mm]{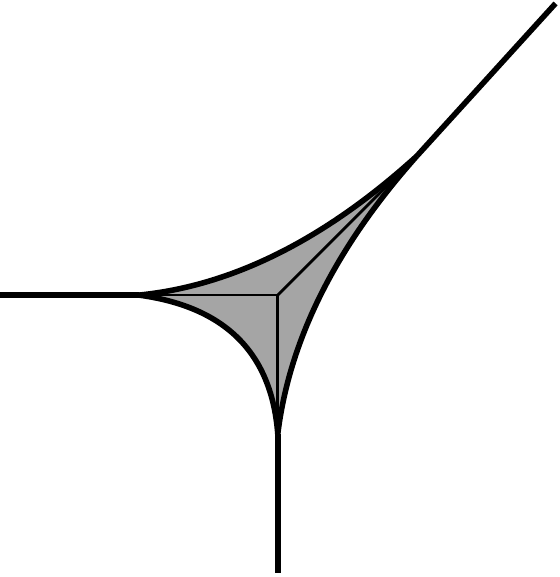}
\caption{The region $\operatorname{Log}_R(W_R)$ for $F=1+x_1+x_2$.}\label{fg:phi}
\end{minipage}\quad
\begin{minipage}[t]{0.47\hsize}\centering
\includegraphics[width=40mm]{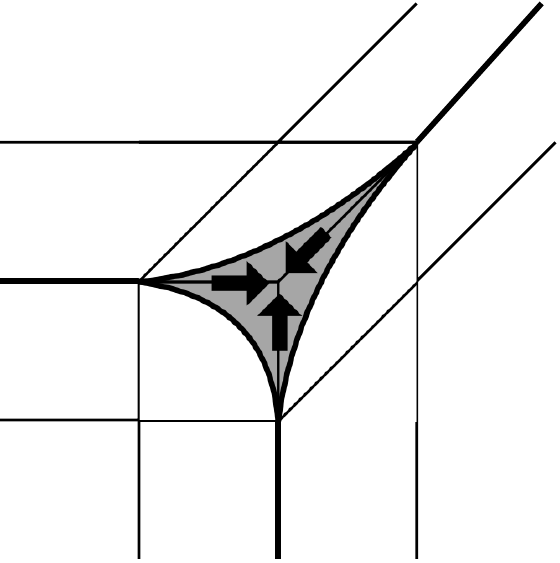}
\caption{The map $\phi_\nu$ for $F=1+x_1+x_2$.}\label{fg:phi2}
\end{minipage}
\end{figure}
\end{Example}

For any $\sigma \in {\mathcal F}'$ and $\rho_1, \rho_2 \in P_\sigma$, if $\widehat{D}_{\rho_1} \cap \widehat{D}_{\rho_2} \neq \varnothing$, there exists $\rho \in P_\sigma$ such that $\rho \prec \rho_1, \rho_2$ and $\widehat{D}_{\rho_1} \cap \widehat{D}_{\rho_2} \subset \widehat{D}_\rho$ (Lemma~\ref{lm:usig}).
From the condition (ii), the map $\phi_{\rho_i}$ coincides with $\phi_\rho$ on $\operatorname{Log}_R(W_R) \cap \widehat{D}_\rho \cap \widehat{D}_{\rho_i}$ $(i=1, 2)$. Hence, the maps $\phi_{\rho_1}$ and $\phi_{\rho_2}$ coincide with each other on $\operatorname{Log}_R(W_R) \cap \widehat{D}_\rho \cap \widehat{D}_{\rho_1} \cap \widehat{D}_{\rho_2}$=$\operatorname{Log}_R(W_R) \cap \widehat{D}_{\rho_1} \cap \widehat{D}_{\rho_2}$. Then it turns out that we can get the continuous map $\phi \colon \operatorname{Log}_R(W_R) \to V(\operatorname{trop}(F))$ by gluing $\{\phi_\rho\}_\rho$. The map $\phi$ satisf\/ies the condition $(\ast)$.

Let $\phi_0, \phi_1 \colon \operatorname{Log}_R(W_R) \to V(\operatorname{trop}(F))$ be two maps satisfying the condition $(\ast)$. We construct a~family of continuous maps $\{ \phi_s \colon \operatorname{Log}_R(W_R) \to X_{{\mathcal F}'}({\mathbb T}) \}_{s \in [0,1]}$ by $\phi_s(X):=(1-s)\phi_0(X)+s\phi_1(X)$, where the addition and the multiplications are taken on~$O_\sigma({\mathbb T}) \cong {\mathbb R}^{l}$ for the cone $\sigma \in {\mathcal F}'$ such that $X \in O_\sigma({\mathbb T})$. This construction is independent of the choice of coordinates on~$O_\sigma({\mathbb T})$. Since each cell is convex, $\phi_s$ satisf\/ies the condition~$(\ast)$ for any~$s \in [0,1]$. Therefore, each map $\phi_s$ is well-def\/ined as a continuous map to $V(\operatorname{trop}(F))$ and $\{\phi_s\}_{s \in [0,1]}$ gives a homotopy between $\phi_0$ and $\phi_1$.
\end{proof}

We f\/ix a map $\phi \colon \operatorname{Log}_R(W_R) \to V(\operatorname{trop}(F))$ satisfying the condition $(\ast)$ in Proposition~\ref{pr:phi}. Let $\sigma \in {\mathcal F}'$ be an $l$-dimensional cone. We choose $a_i, b_{ij} \in {\mathbb Z}$ $(i=1, \dots, n+1-l$, $j=1, \dots, n+1)$ so that the sets of functions $(y_1, \dots ,y_{n+1-l})$ and $(Y_1, \dots, Y_{n+1-l})$ def\/ined by
\begin{gather}\label{eq:cood}
y_i:=q^{a_i}\prod_{j=1}^{n+1}x_j^{b_{ij}},\qquad Y_i:=a_i+\sum_{j=1}^{n+1}b_{ij}X_j,
\end{gather}
form coordinate systems on $O_\sigma({\mathbb C})$ and $O_\sigma({\mathbb T})$, respectively. For each $q = R \exp(\sqrt{-1}\theta) \in S_R^1$, we def\/ine the map $\psi_{\sigma,q} \colon W_R \cap O_\sigma({\mathbb C}) \to O_\sigma({\mathbb C})$ by
\begin{gather*}
(y_1, \dots ,y_{n+1-l}) \mapsto \big(\tilde{\phi}_{1,\theta}y_1, \dots, \tilde{\phi}_{n+1-l,\theta}y_{n+1-l}\big),
\end{gather*}
where $\tilde{\phi}_{i,\theta}(y_1, \dots, y_{n+1-l}) := \exp \left(\sqrt{-1} \theta Y_i \circ \phi \circ \operatorname{Log}_R (y) \right)$ $(i=1, \dots, n+1-l)$.

\begin{Lemma}\label{lm:c0}
For any $q \in S_R^1$ and $\sigma \in {\mathcal F}'$, the map $\psi_{\sigma,q}$ is independent of the choice of the coordinate system $(y_1, \dots, y_{n+1-l})$ on $O_\sigma({\mathbb C})$ and the image $\psi_{\sigma,q}(W_R \cap O_\sigma({\mathbb C}))$ is contained in $W_q \cap O_\sigma({\mathbb C})$.
\end{Lemma}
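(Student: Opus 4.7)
My plan is to handle the two claims in sequence: first verify coordinate-independence by tracking how $\tilde{\phi}_{i,\theta}$ and $y_i$ transform under a change of coordinates, then use this invariance to pick standard coordinates with respect to an appropriate cell $\mu$ of $P_\sigma$, where Theorem~\ref{th:sub}(3) together with the cellular structure of $V(\operatorname{trop}(F))$ makes the image claim nearly automatic.

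For coordinate-independence, any two coordinate systems $(y_i, Y_i)$ and $(y_i', Y_i')$ of the form~\eqref{eq:cood} are related by a monomial change $y_i' = c_i \prod_k y_k^{m_{ik}}$ with $(m_{ik}) \in {\rm GL}_{n+1-l}({\mathbb Z})$, inducing $Y_i' = \log_R|c_i| + \sum_k m_{ik} Y_k$. A direct substitution gives $\tilde{\phi}_{i,\theta}' = \exp(\sqrt{-1}\theta\log_R|c_i|)\prod_k \tilde{\phi}_{k,\theta}^{m_{ik}}$, while applying the change of coordinates to the image of $p$ yields $y_i'(\psi_{\sigma,q}(p)) = \prod_k \tilde{\phi}_{k,\theta}(p)^{m_{ik}}\,y_i'(p)$; matching the two expressions shows that the map determined by $(y_i', Y_i')$ coincides with the one determined by $(y_i, Y_i)$.

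For the image claim, fix $p \in W_R \cap O_\sigma({\mathbb C})$. Theorem~\ref{th:sub}(2) places $p$ in some $D_\rho$ with $\rho \in P_{\{0\}}$; setting $\mu := \rho \cap X_{{\mathcal F}',\sigma}({\mathbb T}) \in P_\sigma$ and using the identity $D_\mu = \operatorname{Log}_R^{-1}(\widehat{D}_\mu)$, we have $\operatorname{Log}_R(p) \in \widehat{D}_\mu$, whence $\phi(\operatorname{Log}_R(p)) \in \mu$ by condition $(\ast)$ of Proposition~\ref{pr:phi}. Let $\mu' \in P_{\{0\}}$ of dimension $k$ be the cell with $\mu = \mu' \cap X_{{\mathcal F}',\sigma}({\mathbb T})$, write $A_{\mu'} = \{m_0, \dots, m_{n+1-k}\}$, and invoke coordinate-independence to work in standard coordinates $(\tilde{x}_i, \tilde{X}_i)$ with respect to $\mu$. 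The defining property of $A_{\mu'}$ gives $v_{m_i}+m_i\cdot X = v_{m_0}+m_0\cdot X$ for every $X\in\mu'\supset\mu$, so $\tilde{X}_i = (v_{m_i}+m_i\cdot X) - (v_{m_0}+m_0\cdot X)$ vanishes on $\mu$ for $i=1,\dots,n+1-k$, and hence $\tilde{\phi}_{i,\theta}(p) = 1$ for these indices. Theorem~\ref{th:sub}(3) then identifies the defining equation of $W_q$ on $D_\mu \cap O_\sigma({\mathbb C})$ with the tropically localized hyperplane equation in $(\tilde{x}_1,\dots,\tilde{x}_{n+1-k})$ (independent of the remaining coordinates), and verifying that $\psi_{\sigma,q}(p)$ satisfies this equation reduces, via the preceding computation, to $p\in W_R$.

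The main obstacle I expect is the careful bookkeeping of the $q$-dependence hidden inside the standard coordinates $\tilde{x}_i = q^{v_{m_i}-v_{m_0}} x^{m_i-m_0}$: the defining equations of $W_R$ and $W_q$ look identical when written in their respective standard coordinates but cut out different subsets of $O_\sigma({\mathbb C})$, and the matching between the two fibers must be effected through the $\tilde{\phi}_{i,\theta}$ factors. The non-canonical ``extension'' coordinates $\tilde{x}_{n+2-k},\dots,\tilde{x}_{n+1-l}$, on which $\tilde{\phi}_{i,\theta}(p)$ may be non-trivial, pose no difficulty thanks to the independence assertion in Theorem~\ref{th:sub}(3).
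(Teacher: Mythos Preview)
Your strategy is the same as the paper's: a direct change-of-coordinates computation for independence, then passage to standard coordinates with respect to the cell $\mu \in P_\sigma$ containing $\phi(\operatorname{Log}_R(p))$ together with Theorem~\ref{th:sub}(3) for the image claim. The second half is correct and essentially identical to the paper's argument.

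There is, however, a gap in your coordinate-independence step: the two expressions you propose to ``match'' do not in fact agree. With $c_i = q^{\alpha_i}$ (so that $\log_R|c_i| = \alpha_i \in \mathbb{Z}$) you correctly compute
\[
\tilde{\phi}'_{i,\theta} = e^{\sqrt{-1}\,\theta\alpha_i}\prod_k \tilde{\phi}_{k,\theta}^{\,m_{ik}}
\qquad\text{and}\qquad
y'_i(\psi_{\sigma,q}(p)) = \prod_k \tilde{\phi}_{k,\theta}(p)^{m_{ik}}\, y'_i(p),
\]
and these differ by the phase $e^{\sqrt{-1}\,\theta\alpha_i}$, which equals $1$ only when $\theta \in 2\pi\mathbb{Z}$, not for general $q\in S^1_R$. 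The paper's calculation handles this by writing the inverse change as $y_i = \prod_k (q^{-\alpha_k} z_k)^{\gamma_{ik}}$ and carrying the explicit factor $q^{-\alpha_k}$ through the substitution, so that its argument $e^{-\sqrt{-1}\,\theta\alpha_k}$ is available to cancel the $e^{\sqrt{-1}\,\theta\alpha_k}$ coming from $\tilde{\phi}'_{k,\theta}$. In your computation the constant $c_i$ is treated as inert when you push $\psi_{\sigma,q}(p)$ through the change of coordinates, and that is precisely where the cancellation is lost. The $q$-bookkeeping you flag as the ``main obstacle'' is therefore already the issue in the first half of the proof, not only in the second.
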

\begin{proof}
First, we show that the map $\psi_{\sigma,q}$ is independent of the choice of the coordinate. Let $(z_1, \dots, z_{n+1-l})$ and $(Z_1, \dots, Z_{n+1-l})$ be other coordinate systems on $O_\sigma({\mathbb C})$ and $O_\sigma({\mathbb T})$ def\/ined just as $(y_1, \dots, y_{n+1-l})$ and $(Y_1, \dots, Y_{n+1-l})$.
We can write
\begin{gather*}
z_i=q^{\alpha_i}\prod_{j=1}^{n+1-l}y_j^{\beta_{ij}}, \qquad y_i=\prod_{j=1}^{n+1-l}\big(q^{-\alpha_j}z_j\big)^{\gamma_{ij}},
\end{gather*}
where $\alpha_i, \beta_{ij}$ are some integral numbers. Here, we have $\sum\limits_{j=1}^{n+1-l}\beta_{ij}\gamma_{jk}=\delta_{ik}$. Let $\psi_{\sigma,q}' \colon W_R \cap O_\sigma({\mathbb C}) \to O_\sigma({\mathbb C})$ be the map def\/ined in $(z_1, \dots, z_{n+1-l})$. For all $z=(z_1, \dots, z_{n+1-l})=y=(y_1, \dots, y_{n+1-l}) \in W_R \cap O_\sigma({\mathbb C})$, we have
\begin{gather*}
z_i \left( \psi_{\sigma,q}' (z) \right)= \left\{ \exp \left( \sqrt{-1} \theta \left( \alpha_i+\sum_{j=1}^{n+1-l} \beta_{ij} Y_j \right) \left( \phi \circ \operatorname{Log}_R (z)\right) \right) \right\} R^{\alpha_i} \prod_{j=1}^{n+1-l} y_j^{\beta_{ij}},
\end{gather*}
and
\begin{gather*}
y_i \left( \psi_{\sigma,q}'(z) \right) =\prod_{k=1}^{n+1-l} \big( q^{-\alpha_k} z_k (\psi_{\sigma,q}' (z)) \big)^{\gamma_{ik}} \\
 =\prod_{k=1}^{n+1-l} \left[ q^{-\alpha_k} \left\{ \exp \left( \sqrt{-1} \theta \left( \alpha_k+\sum_{j=1}^{n+1-l} \beta_{kj} Y_j \right) \left( \phi \circ \operatorname{Log}_R (z)\right) \right) \right\} R^{\alpha_k} \prod_{j=1}^{n+1-l} y_j^{\beta_{kj}} \right]^{\gamma_{ik}} \\
 =\left\{ \exp \left(\sqrt{-1} \theta \sum_{k,j} \gamma_{ik} \beta_{kj} Y_j \left(\phi \circ \operatorname{Log}_R (y) \right) \right) \right\} \prod_{j=1}^{n+1-l} y_j^{\sum_{k} \gamma_{ik} \beta_{kj}} \\
 =\exp \left( \sqrt{-1} \theta Y_i \left( \phi \circ \operatorname{Log}_R (y) \right) \right) y_i = \tilde{\phi}_{i,\theta} y_i.
\end{gather*}
Therefore, we have $y_i ( \psi_{\sigma,q}'(y) )=y_i (\psi_{\sigma,q}(y) )=\tilde{\phi}_{i,\theta} y_i$. Hence, one has $\psi_{\sigma,q} = \psi_{\sigma,q}'$.

Next, we show that the image $\psi_{\sigma,q}(W_R \cap O_\sigma({\mathbb C}))$ is contained in $W_q \cap O_\sigma({\mathbb C})$. Let $\mu \in P_\sigma$ be a cell such that $\mu = \mu' \cap X_{{\mathcal F}',\sigma}({\mathbb C})$ for a $k$-cell $\mu' \in P_{\{0\}}$ and $(\tilde{x}_1, \dots, \tilde{x}_{n+1-l})$ be a standard coordinate with respect to $\mu$. Since $\widetilde{X}_1=\dots=\widetilde{X}_{n+1-k}=0$ on $\mu$, the restriction of $\psi_{\sigma,q}$ to $D_\mu \cap O_\sigma({\mathbb C})$ coincides with
\begin{gather*}
(\tilde{x}_1, \dots, \tilde{x}_{n+1-l}) \mapsto
\big(\tilde{x}_1, \dots, \tilde{x}_{n+1-k}, \tilde{\phi}_{n+2-k,\theta}\tilde{x}_{n+2-k}, \dots, \tilde{\phi}_{n+1-l,\theta} \tilde{x}_{n+1-l}\big).
\end{gather*}
Since the def\/ining equation of $W_q$ on $D_\mu \cap O_\sigma({\mathbb C})$ coincides with that of the $(n-k)$-dimensional tropically localized hyperplane in $(\tilde{x}_1, \dots, \tilde{x}_{n+1-k})$, we have $\psi_{\sigma,q} \left(W_R \cap O_\sigma({\mathbb C}) \cap D_\mu \right) \subset W_q \cap O_\sigma({\mathbb C}) \cap D_\mu$. Hence, the map $\psi_{\sigma,q}$ is well-def\/ined as a map from $W_R \cap O_\sigma({\mathbb C})$ to $W_q \cap O_\sigma({\mathbb C})$.
\end{proof}

\begin{Lemma}\label{lm:c1}
For any $q \in S_R^1$, the family of maps $\{\psi_{\sigma,q}\}_{\sigma \in {\mathcal F}'}$ glues together to give the homeomorphism $\psi_{q} \colon W_R \to W_q$.
\end{Lemma}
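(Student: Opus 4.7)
The plan is to promote the orbit-wise maps $\{\psi_{\sigma,q}\}_{\sigma \in \mathcal{F}'}$ to a continuous map $\psi_q\colon W_R \to W_q$ and then produce a continuous two-sided inverse. Since the torus orbits $\{O_\sigma(\mathbb{C})\}_{\sigma \in \mathcal{F}'}$ partition $X_{\mathcal{F}'}(\mathbb{C})$, the $\psi_{\sigma,q}$ combine set-theoretically to a function $\psi_q\colon W_R \to W_q$ which is well-defined by Lemma~\ref{lm:c0}. Continuity within each orbit is immediate from the continuity of $\phi$, $\operatorname{Log}_R$, and the standard coordinate functions, so the core task is to verify continuity across strata: given $\sigma \prec \sigma'$ in $\mathcal{F}'$ and a sequence $y_n \in W_R \cap O_\sigma(\mathbb{C})$ with $y_n \to y^* \in W_R \cap O_{\sigma'}(\mathbb{C})$, I need $\psi_{\sigma,q}(y_n) \to \psi_{\sigma',q}(y^*)$.

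To handle this I would fix a maximal cone $\tau \in \mathcal{F}'$ containing both $\sigma$ and $\sigma'$ and work in the standard coordinates $(x_1,\dots,x_{n+1})$ on $U_\tau(\mathbb{C}) \cong \mathbb{C}^{n+1}$ associated to the primitive generators of $\tau^\vee$. There are subsets $I_\sigma \subset I_{\sigma'} \subset \{1,\dots,n+1\}$ such that $O_\sigma(\mathbb{C}) \cap U_\tau(\mathbb{C}) = \{x \colon x_i = 0 \iff i \in I_\sigma\}$ and analogously for $\sigma'$. By Lemma~\ref{lm:c0} (coordinate independence), I may compute $\psi_{\sigma,q}$ using $(x_i)_{i \notin I_\sigma}$ as the defining coordinate system and $\psi_{\sigma',q}$ using $(x_i)_{i \notin I_{\sigma'}}$; in both cases the map is the componentwise twist $x_i \mapsto \exp\bigl(\sqrt{-1}\theta\, X_i \circ \phi \circ \operatorname{Log}_R(y)\bigr)\cdot x_i$ with $X_i = \log_R|x_i|$.

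Now I would split the analysis index by index. For $i \notin I_{\sigma'}$, the coordinate $x_i$ is continuous and nonzero near $y^*$, so $X_i$ extends continuously onto $O_{\sigma'}(\mathbb{T})$; continuity of $\phi$ together with $\phi(\operatorname{Log}_R(y_n)) \to \phi(\operatorname{Log}_R(y^*)) \in O_{\sigma'}(\mathbb{T})$ (from condition~$(\ast)$) then yields convergence of both the phase and the product. For $i \in I_{\sigma'}\setminus I_\sigma$, the phase is forced to stay on the unit circle while $x_i(y_n) \to 0$, so the product vanishes in the limit, matching $x_i(y^*) = 0$. Together these give $\psi_{\sigma,q}(y_n) \to \psi_{\sigma',q}(y^*)$. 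For the homeomorphism claim, observe that $\operatorname{Log}_R \circ \psi_{\sigma,q} = \operatorname{Log}_R$ on each orbit; replacing $\theta$ by $-\theta$ in the construction produces orbit-wise two-sided inverses $W_q \cap O_\sigma(\mathbb{C}) \to W_R \cap O_\sigma(\mathbb{C})$ which, by the identical stratum-crossing argument, glue to a continuous $\psi_q^{-1}\colon W_q \to W_R$.

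The main obstacle is precisely the stratum-crossing continuity: the phase $\exp(\sqrt{-1}\theta\, X_i \circ \phi \circ \operatorname{Log}_R(\cdot))$ does \emph{not} extend continuously to the boundary, since $X_i$ diverges to $-\infty$ on $O_{\sigma'}(\mathbb{T})$ whenever $i \in I_{\sigma'} \setminus I_\sigma$, and so the phase can oscillate arbitrarily in the limit. The resolution is to choose coordinates $(x_i)_{i \notin I_\sigma}$ compatible with the toric stratification: then unit modulus of the phase is automatically absorbed by the vanishing factor $x_i$, which is what restores continuity at the boundary stratum.
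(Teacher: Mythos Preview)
Your proof is correct and follows essentially the same route as the paper: both fix a maximal cone $\tau$, use the affine coordinates coming from the primitive generators of $\tau^\vee$, invoke Lemma~\ref{lm:c0} to identify every $\psi_{\sigma,q}$ with the restriction of a single componentwise-twist formula on $U_\tau(\mathbb{C})$, and obtain the inverse by replacing $\theta$ with $-\theta$. The paper packages the stratum-crossing continuity by writing down one map $\Psi_{\tau,q}$ on all of $W_R \cap U_\tau(\mathbb{C})$ and asserting it is continuous, whereas you unpack this via sequences and the bounded-phase-times-vanishing-factor observation; the underlying content is identical.
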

\begin{proof}
Let $\tau \in {\mathcal F}'$ be an $(n+1)$-dimensional cone. Let further $e_1, \dots, e_{n+1} \in {\mathbb Z}^{n+1}$ be the primitive generators of $\tau^\vee$. We set $w_i:=x^{e_i}$ and $W_i:=e_i \cdot X$ $(i=1,\dots, n+1)$. Then the set of functions $(w_1, \dots, w_{n+1})$ and $(W_1, \dots, W_{n+1})$ form coordinate systems on $U_\tau({\mathbb C}) \cong {\mathbb C}^{n+1}$ and $U_\tau({\mathbb T}) \cong {\mathbb T}^{n+1}$, respectively. We def\/ine the map $\Psi_{\tau,q} \colon W_R \cap U_\tau({\mathbb C}) \to U_\tau({\mathbb C})$ by
\begin{gather*}
(w_1, \dots ,w_{n+1}) \mapsto \big(\tilde{\phi}_{1,\theta}w_1, \dots, \tilde{\phi}_{n+1,\theta}w_{n+1}\big),
\end{gather*}
where $\tilde{\phi}_{i,\theta}(w_1, \dots, w_{n+1}) := \exp \left(\sqrt{-1} \theta \, W_i \circ \phi \circ \operatorname{Log}_R (w) \right)\ (i=1, \dots, n+1)$. It is clear from Lemma~\ref{lm:c0} that the map $\psi_{\sigma,q}$ coincides with $\Psi_{\tau,q}$ on $O_\sigma({\mathbb C}) \subset U_\tau({\mathbb C})$ for any face \mbox{$\sigma \prec \tau$}. Therefore, the family of maps $\{\psi_{\sigma,q}\}_{\sigma \in {\mathcal F}'}$ glues together to give the continuous map \mbox{$\psi_{q} \colon W_R \to W_q$}. In addition, the inverse map $(\psi_{\sigma,q})^{-1} \colon W_q \to W_R$ is given by
\begin{gather*}
(y_1, \dots ,y_{n+1-l}) \mapsto \big(\tilde{\phi}_{1,-\theta}y_1, \dots, \tilde{\phi}_{n+1-l,-\theta}y_{n+1-l}\big)
\end{gather*}
and $\{ (\psi_{\sigma,q})^{-1}\}_{\sigma \in {\mathcal F}'}$ forms the inverse map $(\psi_q)^{-1} \colon W_q \to W_R$. It is obvious that the map $(\psi_q)^{-1}$ is continuous. Hence, the map $\psi_q$ is a homeomorphism for any $q \in S_R^1$.
\end{proof}

The following is the main theorem of this paper.

\begin{Theorem}\label{th:main} Assume that $V(\operatorname{trop}(F))$ is smooth. We fix a sufficiently large number $R \in {\mathbb R}^{>0}$. Let $\phi \colon \operatorname{Log}_R(W_R) \to V(\operatorname{trop}(F))$ be a map satisfying the condition $(\ast)$ in Proposition~{\rm \ref{pr:phi}}. Let further $\psi \colon W_R \to W_R$ be the map defined on each orbit $O_\sigma({\mathbb C})$ $(\sigma \in {\mathcal F}')$ by
\begin{gather}\label{eq:c4}
(y_1, \dots ,y_{n+1-l}) \mapsto \big(\tilde{\phi}_{1}y_1, \dots, \tilde{\phi}_{n+1-l}y_{n+1-l}\big),
\end{gather}
where $(y_1, \dots ,y_{n+1-l})$ is a coordinate system on $O_\sigma({\mathbb C})$ defined as in~\eqref{eq:cood} and
\begin{gather*}
\tilde{\phi}_{i}(y) := \exp \big(2 \pi \sqrt{-1} Y_i \circ \phi \circ \operatorname{Log}_R (y) \big).
\end{gather*}
Then the map $\psi \colon W_R \to W_R$ gives a monodromy transformation of $\{ V_q \}_{q \in S_R^1}$ under the identification $V_R \cong W_R$.
For each cell $\mu \in P_\sigma$, the restriction of $\psi$ to $D_\mu \cap O_\sigma({\mathbb C})$ coincides with
\begin{gather*}
(\tilde{x}_1, \dots, \tilde{x}_{n+1-l}) \mapsto
\big(\tilde{x}_1, \dots, \tilde{x}_{n+1-k}, \tilde{\phi}_{n+2-k}\tilde{x}_{n+2-k}, \dots, \tilde{\phi}_{n+1-l} \tilde{x}_{n+1-l}\big),
\end{gather*}
in a standard coordinate $(\tilde{x}_1, \dots, \tilde{x}_{n+1-l})$ with respect to~$\mu$.
\end{Theorem}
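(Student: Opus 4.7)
The plan is to assemble the three technical ingredients already in place: the isotopy between $V_q$ and $W_q$ from Theorem~\ref{th:sub}, the existence (and uniqueness up to homotopy) of $\phi$ from Proposition~\ref{pr:phi}, and the construction of the homeomorphisms $\psi_{\sigma,q}$ and their gluing $\psi_q$ from Lemmas~\ref{lm:c0} and~\ref{lm:c1}. By Theorem~\ref{th:sub}(1), for each $q \in S_R^1$ the submanifold $W_q$ is isotopic to $V_q$ in $X_{{\mathcal F}'}({\mathbb C})$, and moreover the isotopies can be chosen continuously in $q$ (this is exactly the fibration statement coming from Ehresmann's theorem in the proof of Theorem~\ref{th:sub}). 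Consequently, under the induced identification $V_R \cong W_R$, the monodromy of $\{V_q\}_{q \in S_R^1}$ is identified with that of $\{W_q\}_{q \in S_R^1}$, and it is enough to produce the latter.

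First, I would interpret Lemmas~\ref{lm:c0} and~\ref{lm:c1} as providing, for each $q = R \exp(\sqrt{-1}\theta) \in S_R^1$, a homeomorphism $\psi_q \colon W_R \to W_q$ defined by the formula~\eqref{eq:c4} with $2\pi$ replaced by $\theta$. Since the factors $\tilde{\phi}_{i,\theta}(y) = \exp(\sqrt{-1}\theta \, Y_i \circ \phi \circ \operatorname{Log}_R(y))$ depend continuously on $\theta$, the assignment $\theta \mapsto \psi_{R\exp(\sqrt{-1}\theta)}$ is continuous in $\theta \in [0,2\pi]$; at $\theta=0$ one has $\tilde{\phi}_{i,0} \equiv 1$, so $\psi_R$ is the identity on $W_R$. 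Thus $\{\psi_{R\exp(\sqrt{-1}\theta)}\}_{\theta \in [0,2\pi]}$ is a continuous family trivializing the family $\{W_q\}_{q \in S_R^1}$ over the basepoint $q=R$, and the endpoint homeomorphism at $\theta=2\pi$ is precisely the map $\psi$ of the statement. This endpoint therefore represents the monodromy transformation of $\{W_q\}_{q \in S_R^1}$, and hence, via the isotopy identification above, the monodromy of $\{V_q\}_{q \in S_R^1}$. The uniqueness clause of Proposition~\ref{pr:phi} ensures that a different choice of $\phi$ yields a map isotopic to $\psi$, so the isotopy class of the monodromy does not depend on the auxiliary data.

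Second, the explicit local formula on $D_\mu \cap O_\sigma({\mathbb C})$ in a standard coordinate is a direct consequence of the definitions combined with condition~$(\ast)$ of Proposition~\ref{pr:phi}. In the standard coordinate $(\tilde{x}_1, \dots, \tilde{x}_{n+1-l})$ with respect to $\mu$, the tropical coordinates $\widetilde{X}_1, \dots, \widetilde{X}_{n+1-k}$ vanish identically on $\mu$ by construction (they are the differences $(v_{m_i}+m_i \cdot X)-(v_{m_0}+m_0 \cdot X)$, which coincide on $\mu$ by definition of $A_{\mu'}$). Since $\phi$ maps $\operatorname{Log}_R(W_R) \cap \widehat{D}_\mu \cap O_\sigma({\mathbb T})$ into $\mu$, evaluating $\widetilde{X}_i \circ \phi$ returns zero for $i \leq n+1-k$, making the corresponding $\tilde{\phi}_i \equiv 1$ and leaving only the last coordinates $\tilde{x}_{n+2-k}, \dots, \tilde{x}_{n+1-l}$ rotated, as claimed. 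The computation that this rotation indeed preserves $W_q$ is already contained in the proof of Lemma~\ref{lm:c0}, via Theorem~\ref{th:sub}(3) which shows that the defining equation on $D_\mu \cap O_\sigma({\mathbb C})$ only involves $(\tilde{x}_1, \dots, \tilde{x}_{n+1-k})$.

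The main conceptual point, rather than an obstacle, is that the heavy lifting was all performed earlier: Theorem~\ref{th:sub} isolates the tropically localized model whose restriction to each stratum $D_\mu \cap O_\sigma({\mathbb C})$ is a pullback of a lower-dimensional tropically localized hyperplane, and Lemma~\ref{lm:c0} records that the product action by the phases $\tilde{\phi}_{i,\theta}$ preserves such pullbacks. Given these, the present theorem is essentially the assertion that the $\theta$-dependent family so obtained is continuous and starts at the identity, together with the transparent local formula one reads off from $\widetilde{X}_i|_\mu = 0$ for $i \leq n+1-k$.
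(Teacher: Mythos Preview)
Your proposal is correct and follows essentially the same approach as the paper: the paper's own proof is the one-line ``It is clear from Lemmas~\ref{lm:c0} and~\ref{lm:c1},'' relying on the setup at the start of Section~\ref{sc:4} (that a continuous family $\psi_q$ computes the monodromy of $\{W_q\}$, hence of $\{V_q\}$ via Theorem~\ref{th:sub}), and you have simply spelled these steps out in full, including the local formula already contained in the proof of Lemma~\ref{lm:c0}.
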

\begin{proof}
It is clear from Lemmas~\ref{lm:c0} and~\ref{lm:c1}.
\end{proof}

\section{Proof of Corollary~\ref{cr:iwa}}\label{sc:5}

In this section, we show that Corollary~\ref{cr:iwa} follows from Theorem~\ref{th:main}. We set $n=1$. Let $\phi \colon \operatorname{Log}_R(W_R) \to V(\operatorname{trop}(F))$ be a map satisfying the condition $(\ast)$ in Proposition~\ref{pr:phi}. We set the map $\phi$ so that the restriction of $\phi$ to $\operatorname{Log}_R(W_R) \cap \widehat{D}_\rho$ gives a bijection to $\rho$ for any edge $\rho \in P_{\{0\}}$. Let $\nu \in P_{\{0\}}$ be a vertex of $V(\operatorname{trop}(F))$ contained in $O_{\{0\}}({\mathbb T})$. Let further $(\tilde{x}_1,\tilde{x}_2)$ and $(\widetilde{X}_1, \widetilde{X}_2)$ be standard coordinates with respect to $\nu$ (see Section~\ref{sc:2.3}). On $D_\nu$, the tropical localization $W_q$ is def\/ined by the def\/ining equation of the $1$-dimensional tropically localized hyperplane in $(\tilde{x}_1, \tilde{x}_2)$. Since we have $\widetilde{X}_1(\nu)=\widetilde{X}_2(\nu)=0$ and the restriction of $\phi$ to $\operatorname{Log}_R(W_R) \cap \widehat{D}_\nu$ is the constant map to $\nu$, the monodromy transformation $\psi$ in Theorem~\ref{th:main} coincides with the identity map on $D_\nu$. Similarly, it turns out that the map $\psi$ also coincides with the identity map on $D_{\nu'}$ for any vertex $\nu' \in P$ contained in a lower dimensional torus orbit.

Let $\mu \in P_{\{0\}}$ be a bounded edge of $V(\operatorname{trop}(F))$ and $\nu_1$, $\nu_2$ be the endpoints of $\mu$. We set $\{m_0, m_1, m_2\} \subset A$ so that $\{m_0, m_1, m_2\}=A_{\nu_1}$ and $\{m_0, m_1\}=A_\mu$, where~$A_{\nu_1}$ and $A_{\mu}$ are subsets of $A$ def\/ined in~\eqref{eq:A}. We def\/ine the standard coordinate with respect to $\nu_1$ by
\begin{gather*}
\tilde{x}_i :=q^{v_{m_i}}x^{m_i} / q^{v_{m_0}}x^{m_0}, \qquad \widetilde{X}_i:=(v_{m_i}+m_iX)-(v_{m_0}+m_0X),
\end{gather*}
for $i=1,2$. Then the coordinate systems $(\tilde{x}_1,\tilde{x}_2)$ and $(\widetilde{X}_1, \widetilde{X}_2)$ are also standard coordinates with respect to~$\mu$. On~$D_\mu$, the def\/ining equation of the tropical localization~$W_q$ coincides with
\begin{gather}\label{eq:tr1}
b(\log_R|\tilde{x}_1|)+b(-\log_R|\tilde{x}_1|)\tilde{x}_1=0.
\end{gather}

\begin{Lemma} The solution of \eqref{eq:tr1} is $\tilde{x}_1=-1$.
\end{Lemma}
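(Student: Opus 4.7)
The plan is to parametrize $\tilde{x}_1 = re^{\sqrt{-1}\theta}$ with $r > 0$ and set $u := \log_R r$, so equation~\eqref{eq:tr1} becomes
\begin{gather*}
b(u) + b(-u)\, r\, e^{\sqrt{-1}\theta} = 0.
\end{gather*}
First, I would rule out extreme values of $u$ using the support conditions on $b$. If $u \geq C_0$, then $b(u) = 0$ and $b(-u) = 1$, so the equation reduces to $re^{\sqrt{-1}\theta} = 0$, which is impossible since $r = R^u > 0$. If $u \leq -C_0$, then $b(u) = 1$ and $b(-u) = 0$, leaving the contradiction $1 = 0$. Hence any solution satisfies $-C_0 < u < C_0$, and on this range both $b(u)$ and $b(-u)$ are strictly positive real numbers.

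Since $b(u)$ and $b(-u)$ are positive reals, the equation forces $re^{\sqrt{-1}\theta}$ to be a negative real number, i.e., $\theta = \pi$ and $\tilde{x}_1 = -r$. Substituting this back gives the two real conditions
\begin{gather*}
r = \frac{b(u)}{b(-u)}, \qquad r = R^u,
\end{gather*}
so solutions are in bijection with zeros of $g(u) := b(u)/b(-u) - R^u$ on $(-C_0, C_0)$. The value $u = 0$ clearly works because $b(0) = b(-0)$ and $R^0 = 1$, yielding $\tilde{x}_1 = -1$.

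For uniqueness, I would invoke monotonicity of $b$. By the defining properties of $b$ (value $1$ on $\{X \leq C_1\}$, value $0$ on $\{X \geq C_0\}$, monotone in between), the function $b$ is monotonically decreasing, so $u \mapsto b(u)$ is nonincreasing and $u \mapsto b(-u)$ is nondecreasing. Hence $u \mapsto b(u)/b(-u)$ is strictly decreasing on $(-C_0, C_0)$ (tending to $+\infty$ as $u \to -C_0$ and to $0$ as $u \to C_0$), while $u \mapsto R^u$ is strictly increasing (for $R > 1$). Two such continuous functions intersect at exactly one point, forcing $u = 0$ as the unique root of $g$. Therefore $\tilde{x}_1 = -1$ is the unique solution of~\eqref{eq:tr1}.

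The argument is essentially elementary once the cut-off structure of $b$ is unpacked; the only place that requires some care is the transition region $C_1 \leq |u| \leq C_0$, where neither $b(u)$ nor $b(-u)$ need be $0$ or $1$, but this is handled uniformly by the monotonicity argument above, so I do not anticipate a genuine obstacle.
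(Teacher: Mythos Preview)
Your approach is correct in substance and somewhat more streamlined than the paper's. The paper proceeds by case analysis on $u=\log_R|\tilde{x}_1|$: on the central band $-C_1\le u\le C_1$ both cut-offs equal $1$ and \eqref{eq:tr1} reduces to $1+\tilde{x}_1=0$; on the transition bands $C_1\le |u|\le C_0$ one cut-off is $1$ and the other lies in $(0,1)$, and the paper rules these out for $R$ sufficiently large (a solution would force $|\tilde{x}_1|=b(u)<1$ while $|\tilde{x}_1|=R^u\ge R^{C_1}>1$, and symmetrically on the other band). Your monotonicity argument handles all regimes at once and in fact needs only $R>1$.

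One small correction: the ratio $b(u)/b(-u)$ is not \emph{strictly} decreasing on $(-C_0,C_0)$. By condition~1 on $b$ we have $b\equiv 1$ on $(-\infty,C_1]$, so the ratio is identically $1$ on $[-C_1,C_1]$; and elsewhere $b$ is only assumed monotone, not strictly so. What you can assert is that $b(u)/b(-u)$ is \emph{non-increasing}. This is enough: a non-increasing function and a strictly increasing function (here $R^u$) can intersect in at most one point, so uniqueness of $u=0$ still follows. With that wording adjustment your proof is complete.
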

\begin{proof}The equation \eqref{eq:tr1} coincides with $b(\log_R|\tilde{x}_1|)+\tilde{x}_1=0$ when $C_1 \leq \log_R|\tilde{x}_1| \leq C_0$ and $1+b(-\log_R|\tilde{x}_1|)\tilde{x}_1=0$ when $-C_0 \leq \log_R|\tilde{x}_1| \leq -C_1$. These equations have no solution when~$R$ is suf\/f\/iciently large.
In the case $-C_1 \leq \log_R|\tilde{x}_1| \leq C_1$, \eqref{eq:tr1} coincides with $1+\tilde{x}_1=0$.
\end{proof}

Hence, the tropical localization $W_q$ coincides with the cylinder def\/ined by $\tilde{x}_1=-1$ and $\tilde{x}_2$ are free on~$D_\mu$. Let $l \in {\mathbb Z}_{>0}$ be the length of~$\mu$. In the coordinate system $(\tilde{x}_1,\tilde{x}_2)$, we have $\widetilde{X}_1(\nu_1)=\widetilde{X}_2(\nu_1)=0$ and
$\widetilde{X}_1(\nu_2)=0$, $\widetilde{X}_2(\nu_2)=-l$. Note that the lengths of edges are invariant under the coordinate transformations. Since the restriction of $\phi$ to $\operatorname{Log}_R(W_R) \cap \widehat{D}_\mu$ gives a~bijection to~$\mu$, we can see from Theorem~\ref{th:main} that the map $\psi$ coincides with the composition of $l$-times of Dehn twists on~$D_\mu$. Similarly, it turns out that the restriction of $\psi$ to $D_{\mu'}$ coincides with the compositions of inf\/initely many times of Dehn twists for any unbounded edge $\mu' \in P_{\{0\}}$.

\section{Examples}\label{sc:6}

\subsection{Example in dimension 1}

Consider the polynomial $F$ given by \eqref{eq:hypellip}. The tropical hypersurface $V(\operatorname{trop}(F))$ is shown in Fig.~\ref{fg:ex1dim}. Let $\nu_1$, $\nu_2$, $\mu$ denote cells of $V(\operatorname{trop}(F))$ as shown in Fig.~\ref{fg:ex1dim} ($\nu_1$, $\nu_2$ denote $0$-cells and~$\mu$ denotes the $1$-cell). The regions $\widehat{D}_{\nu_1}$, $\widehat{D}_{\nu_2}$ and $\widehat{D}_{\mu}$ def\/ined in Def\/inition~\ref{df:nbd} are shown in Fig.~\ref{fg:ex1dim2}.

\begin{figure}[t]
\begin{minipage}[t]{0.5\hsize}\centering
\includegraphics[width=25mm]{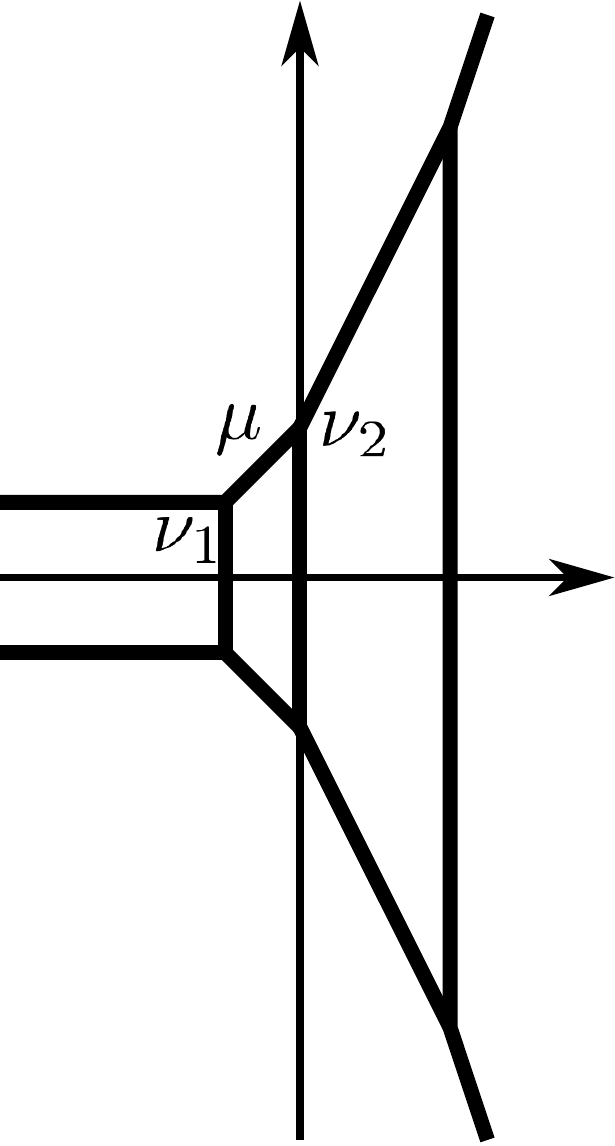}
\caption{The tropical hypersurface $V(\operatorname{trop}(F))$.}\label{fg:ex1dim}
\end{minipage}\quad
\begin{minipage}[t]{0.46\hsize}\centering
\includegraphics[width=25mm]{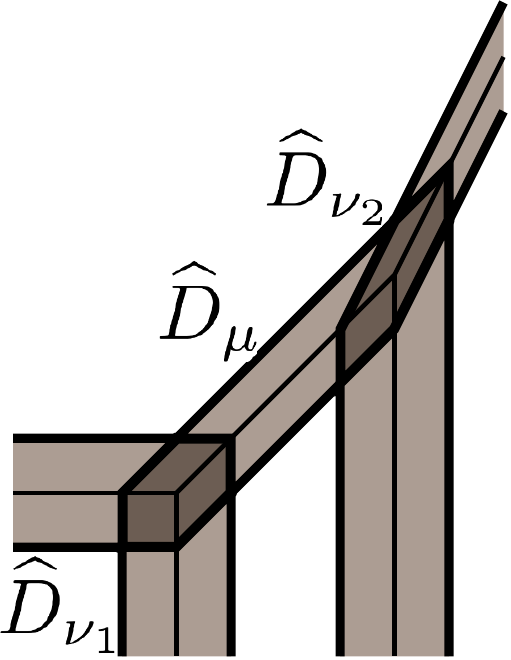}
\caption{Regions $\widehat{D}_{\nu_1}$, $\widehat{D}_{\nu_2}$ and $\widehat{D}_{\mu}$.}\label{fg:ex1dim2}
\end{minipage}
\end{figure}

The set $A_{\nu_1}$ def\/ined in \eqref{eq:A} is given by $\{ (0,2), (1,1), (0,1) \}$. We set
\begin{gather*}
\tilde{x}_1 := q^2x_1x_2/ x_2^2=q^2x_1x_2^{-1}, \qquad \tilde{x}_2:= qx_2/x_2^2=qx_2^{-1}, \\
\widetilde{X}_1 := (2+X_1+X_2)-(2X_2)=2+X_1-X_2, \qquad \widetilde{X}_2:=(1+X_2)-2X_2=1-X_2.
\end{gather*}
Then the sets of functions $(\tilde{x}_1, \tilde{x}_2)$ and $(\widetilde{X}_1, \widetilde{X}_2)$ form standard coordinates on $O_{\{0\}}({\mathbb C})$ and $O_{\{0\}}({\mathbb T})$ with respect to $\nu_1$ def\/ined in Section~\ref{sc:2.3}. Similarly, we have $A_{\nu_2}=\{(0,2), (1,1), (2,1) \}$ and we set
\begin{gather*}
\tilde{x}_3:= q^2x_1^2x_2/x_2^2=q^2x_1^2x_2^{-1}, \qquad \widetilde{X}_3:=(2+2X_1+X_2)-(2X_2)=2+2X_1-X_2,
\end{gather*}
so that the sets of functions $(\tilde{x}_1, \tilde{x}_3)$ and $(\widetilde{X}_1, \widetilde{X}_3)$ form standard coordinates on $O_{\{0\}}({\mathbb C})$ and $O_{\{0\}}({\mathbb T})$ with respect to $\nu_2$. In addition, we have $A_{\mu}=\{ (0,2), (1,1) \}$. Hence, the sets of functions $(\tilde{x}_1, \tilde{x}_2)$, $(\widetilde{X}_1, \widetilde{X}_2)$ and $(\tilde{x}_1, \tilde{x}_3)$, $(\widetilde{X}_1, \widetilde{X}_3)$ also form standard coordinates with respect to~$\mu$. Let~$W_R$ be the tropical localization def\/ined in Def\/inition~\ref{df:loc} and $\phi \colon \operatorname{Log}_R(W_R) \to V(\operatorname{trop}(F))$ be a map satisfying the condition~$(\ast)$ in Proposition~\ref{pr:phi}. Here, we set the map~$\phi$ so that the restriction of $\phi$ to $\operatorname{Log}_R(W_R) \cap \widehat{D}_\rho$ gives a bijection to~$\rho$ for any edge~$\rho \in P_{\{0\}}$. The manifold $W_R$, the map $\phi$ and the monodromy transformation $\psi \colon W_R \to W_R$ on each region are listed in Table~\ref{tb:1}.

\begin{table}[t]\centering
\caption{Monodromy transformation $\psi$ on each region.}\label{tb:1}\vspace{1mm}
\begin{tabular}{@{}cccc@{}}
\hline
region & $D_{\nu_1}$ & $D_{\nu_2}$ & $D_{\mu}$ \\
\hline
standard coordinate & $(\tilde{x}_1, \tilde{x}_2)$ & $(\tilde{x}_1, \tilde{x}_3)$ & $(\tilde{x}_1, \tilde{x}_2)$ or $(\tilde{x}_1, \tilde{x}_3)$ \tsep{2pt}\bsep{3pt}\\
\shortstack{tropical localization \\ $W_R$} & \shortstack{$1$-dimensional tropically \\ localized hyperplane \\ in $(\tilde{x}_1, \tilde{x}_2)$}
& \shortstack{$1$-dimensional tropically \\ localized hyperplane \\ in $(\tilde{x}_1, \tilde{x}_3)$}
& \shortstack{$1$-dimensional \\ cylinder \\ in $\tilde{x}_2$ or $\tilde{x}_3$}\bsep{3pt}\\
map $\phi$ & constant map to $\nu_1$ & constant map to $\nu_2$ & bijection to $\mu$ \bsep{3pt} \\
monodromy $\psi$ & identity map & identity map & Dehn twist\bsep{2pt} \\
\hline
\end{tabular}
\end{table}

Since $\widetilde{X}_1(\nu_1)=\widetilde{X}_2(\nu_1)=0$ and $\widetilde{X}_1(\nu_2)=\widetilde{X}_3(\nu_2)=0$, we can see from Theorem~\ref{th:main} that the restrictions of $\psi$ to $D_{\nu_1}$ and $D_{\nu_2}$ are identity maps. On $D_\mu$, if we use $(\tilde{x}_1, \tilde{x}_2)$ as a coordinate system, we have $\widetilde{X}_1 \equiv 0$ on $\mu$ and $\widetilde{X}_2(\nu_1)=0, \widetilde{X}_2(\nu_2)=-1$. Then we can also see from Theorem~\ref{th:main} that the restriction of~$\psi$ to~$D_{\mu}$ coincides with the Dehn twist in the component of the cylinder in~$\tilde{x}_2$.

\subsection{Example in dimension 2}

Consider the polynomial $G(x_1,x_2,x_3)=t^{-1}+x_1+x_2+x_3+x_1^{-1}x_2^{-1}x_3^{-1}$. Then we have
\begin{gather*}
g_q(x_1,x_2,x_3) =q+x_1+x_2+x_3+x_1^{-1}x_2^{-1}x_3^{-1},\\
\operatorname{trop}(G)(X_1,X_2,X_3) =\max \{ 1, X_1, X_2, X_3, -X_1-X_2-X_3 \}.
\end{gather*}
The tropical hypersurface $V(\operatorname{trop}(G))$ is shown in Fig.~\ref{fg:ex2dim}. Let $\rho$ denote the $2$-cell of $V(\operatorname{trop}(G))$ contained in $X_1=1$ and $\nu_1$, $\nu_2$, $\nu_3$, $\mu_1$, $\mu_2$, $\mu_3$ denote faces of~$\rho$ as shown in Fig.~\ref{fg:ex2dim} ($\rho$ denotes the $2$-cell colored in light gray). Fig.~\ref{fg:ex2dim2} shows the intersections of the hyperplane $X_1=1$ and regions $\widehat{D}_{\nu_i}$, $\widehat{D}_{\mu_i}$ $(i=1,2,3)$ and $\widehat{D}_{\rho}$ def\/ined in Def\/inition~\ref{df:nbd}.

\begin{figure}[t]\centering
\begin{minipage}[t]{0.49\hsize}\centering
\includegraphics[width=30mm]{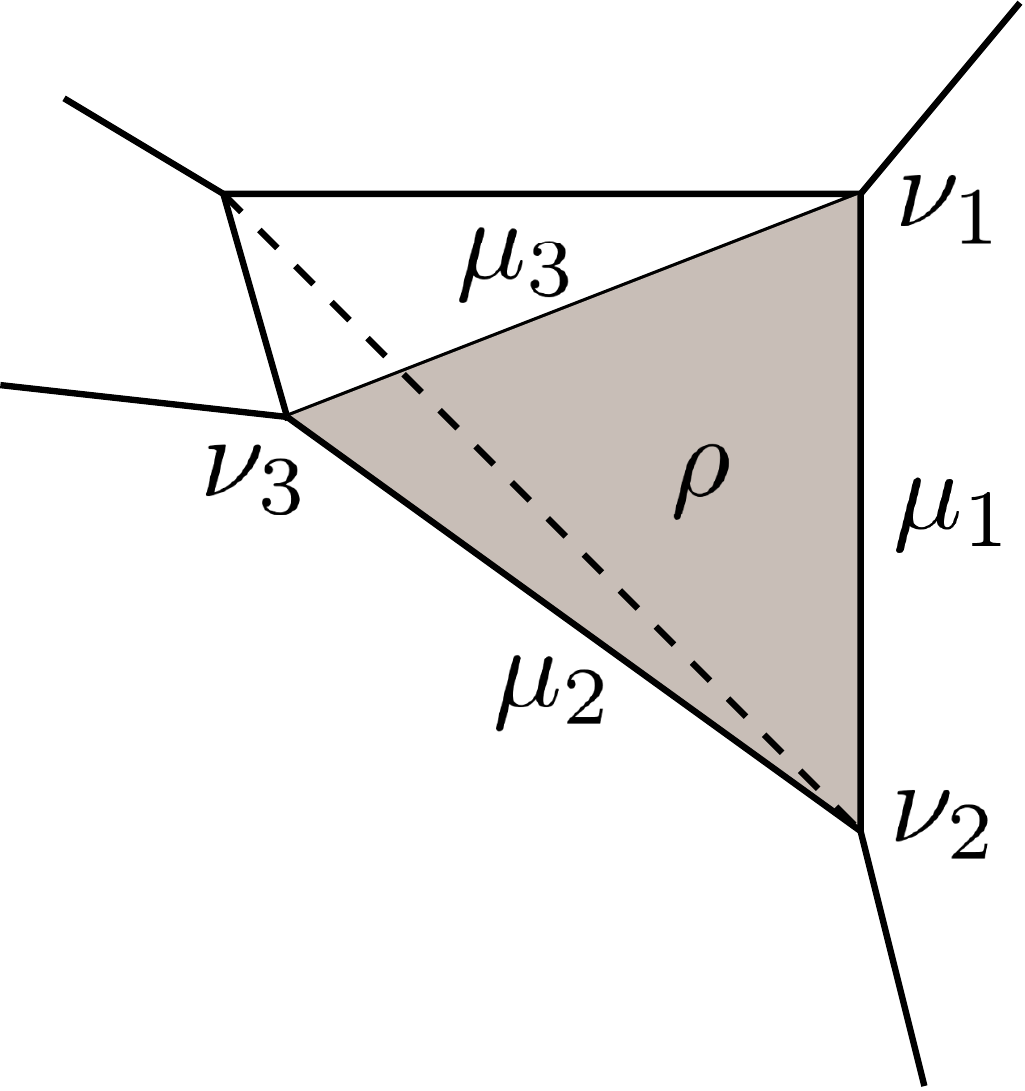}
\caption{The tropical hypersurface $V(\operatorname{trop}(G))$.}\label{fg:ex2dim}
\end{minipage}\quad
\begin{minipage}[t]{0.45\hsize}\centering
\includegraphics[width=35mm]{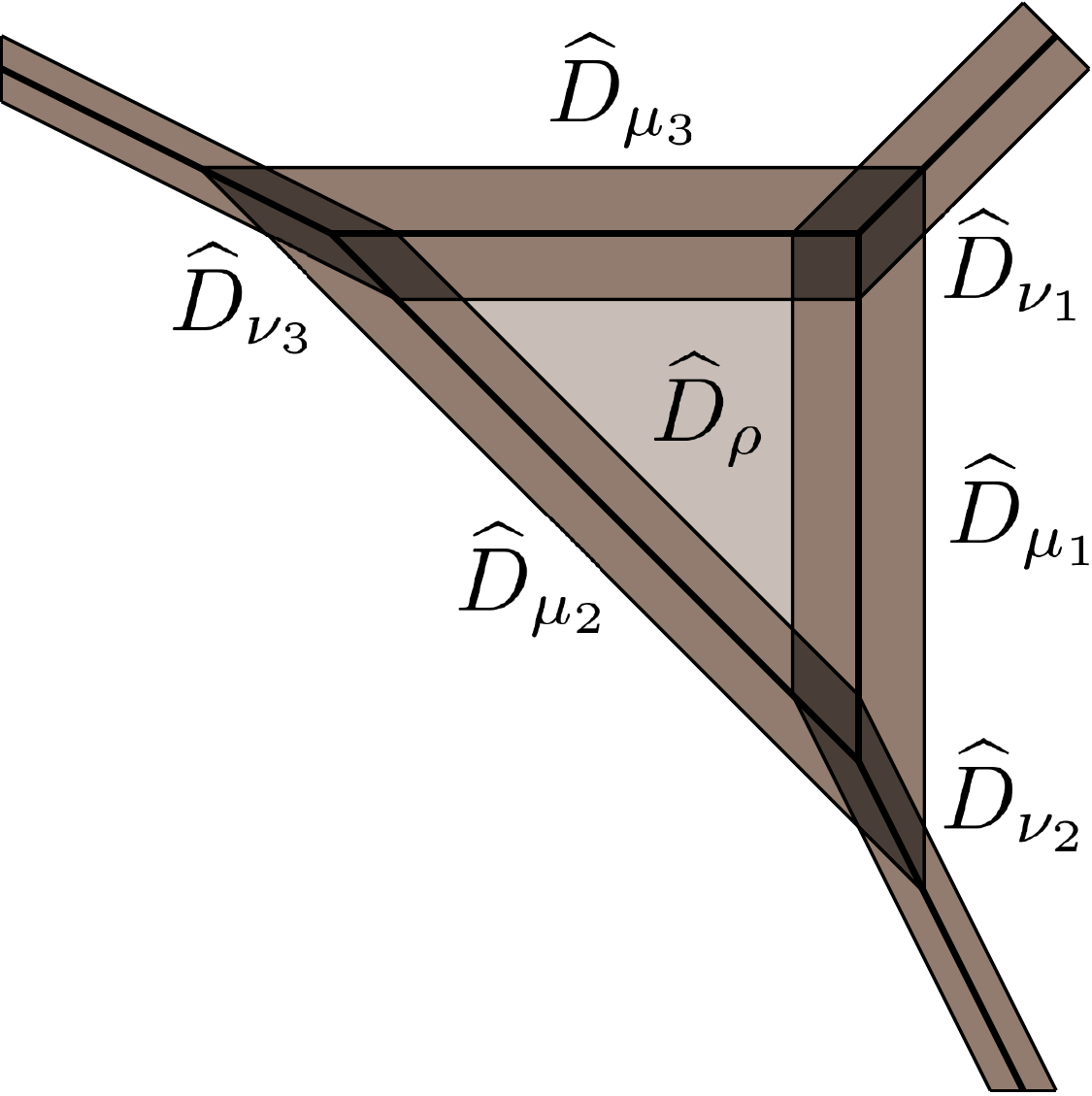}
\caption{The intersections of the hyperplane $X_1=1$ and regions $\widehat{D}_{\nu_i}$, $\widehat{D}_{\mu_i}$, $\widehat{D}_{\rho}$.}\label{fg:ex2dim2}
\end{minipage}
\end{figure}

The set $A_{\nu_1}$ is given by $\{(0,0,0), (1,0,0), (0,1,0), (0,0,1) \}$. We set $\tilde{x}_i:= x_i/ q=q^{-1}x_i$ and $\widetilde{X}_i:=-1+X_i$ for $i=1, 2, 3$. Then the sets of functions $(\tilde{x}_1, \tilde{x}_2, \tilde{x}_3)$ and $(\widetilde{X}_1, \widetilde{X}_2, \widetilde{X}_3)$ form standard coordinates with respect to $\nu_1$ on $O_{\{0\}}({\mathbb C})$ and $O_{\{0\}}({\mathbb T})$. On the other hand, we have $A_{\mu_1}=\{(0,0,0), (1,0,0), (0,1,0) \}$, $A_{\mu_3}=\{(0,0,0), (1,0,0), (0,0,1) \}$ and $A_{\rho}=\{(0,0,0), (1,0,0) \}$. Hen\-ce, the sets of functions $(\tilde{x}_1, \tilde{x}_2, \tilde{x}_3)$ and $(\widetilde{X}_1, \widetilde{X}_2, \widetilde{X}_3)$ also form standard coordinates with respect to $\mu_1$, $\mu_3$ and~$\rho$.

\begin{enumerate}\itemsep=0pt
\item Tropical localization $W_R$ coincides with the following:
 \begin{enumerate}\itemsep=0pt
 \item On $D_{\nu_1} \colon$ the $2$-dimensional tropically localized hyperplane in $(\tilde{x}_1, \tilde{x}_2, \tilde{x}_3)$.
 \item On $D_{\mu_1} \colon$ the direct product of the $1$-dimensional tropically localized hyperplane in $(\tilde{x}_1, \tilde{x}_2)$ and the $1$-dimensional cylinder in~$\tilde{x}_3$.
 \item On $D_{\mu_3} \colon$ the direct product of the $1$-dimensional tropically localized hyperplane in $(\tilde{x}_1, \tilde{x}_3)$ and the $1$-dimensional cylinder in~$\tilde{x}_2$.
 \item On $D_{\rho} \colon$ the direct product of $1$-dimensional cylinders in $\tilde{x}_2$ and~$\tilde{x}_3$.
 \end{enumerate}

\item Let $\phi \colon \operatorname{Log}_R(W_R) \to V(\operatorname{trop}(G))$ be a map satisfying the condition $(\ast)$ in Proposition~\ref{pr:phi}.
We set the map $\phi$ so that the restriction of $\phi$ to $\widehat{D}_{\rho'}$ gives a surjection to~$\rho'$ for any cell $\rho' \in P$.

\item Monodromy transformation $\psi$ is given as follows:
 \begin{enumerate}\itemsep=0pt
 \item On $D_{\nu_1} \colon$ the identity map.
 \item On $D_{\mu_1} \colon$ the map which is identical in the component of the $1$-dimensional tropically localized hyperplane in $(\tilde{x}_1, \tilde{x}_2)$ and coincides with the composition of four times of the Dehn twists in the component of the cylinder in $\tilde{x}_3$.
 \item On $D_{\mu_3} \colon$ the map which is identical in the component of the $1$-dimensional tropically localized hyperplane in $(\tilde{x}_1, \tilde{x}_3)$ and coincides with the composition of four times of the Dehn twists in the component of the cylinder in $\tilde{x}_2$.
 \item On $D_{\rho} \colon$ the map which coincides with the composition of four times of the Dehn twists in both components of the cylinders in $\tilde{x}_2$ and $\tilde{x}_3$.
 \end{enumerate}
 Since the restriction of the map $\phi$ to $\widehat{D}_{\nu_1}$ is the constant map to $\nu_1$ and $\widetilde{X}_i(\nu_1)=0$ for $i=1,2,3$, it follows from Theorem~\ref{th:main} that the restriction of $\psi$ to $D_{\nu_1}$ coincides with the identity map. Since the restriction of the map~$\phi$ to~$D_{\mu_1}$ is a surjection to $\mu_1$ and $\widetilde{X}_3(\nu_1)=0, \widetilde{X}_3(\nu_2)=-4$, we can see from Theorem~\ref{th:main} that the restriction of $\psi$ to $D_{\mu_1}$ coincides with the composition of four times of the Dehn twists in the component of the cylinder in~$\tilde{x}_3$.
 Similarly, it turns out that the restriction of $\psi$ to $D_{\mu_3}$ coincides with the composition of four times of the Dehn twists in the component of the cylinder in $\tilde{x}_2$. On~$D_\rho$, we can also see from Theorem~\ref{th:main} that the map $\psi$ coincides with the composition of four times of the Dehn twists in both components of the cylinders in~$\tilde{x}_2$ and~$\tilde{x}_3$.
\end{enumerate}

\section{Relation to Zharkov's work}\label{sc:7}

\begin{Definition}
A convex lattice polytope $\Delta \subset M_{\mathbb R}$ is {\it smooth} if for each vertex $v$ of $\Delta$, there exists a ${\mathbb Z}$-basis $z_1, \dots, z_{n+1}$ of $M$ such that ${\mathbb R}^{\geq0}(\Delta-v)={\mathbb R}^{\geq 0}z_1+ \cdots +{\mathbb R}^{\geq 0}z_{n+1}$.
\end{Definition}

\begin{Definition}
Let $\Delta \subset M_{\mathbb R}$ be a convex lattice polytope.
We def\/ine the {\it polar polytope} $\Delta^\ast \subset N_{\mathbb R}$ of $\Delta$ by
\begin{gather*}
\Delta^\ast:=\{ n \in N_{\mathbb R} \,|\, \langle m ,n \rangle \geq -1\ \mathrm{for\ all\ } m \in \Delta \}.
\end{gather*}
The convex lattice polytope $\Delta$ is called {\it reflexive} if it contains the origin $0 \in M$ as its interior point and the polar polytope $\Delta^\ast$ is also a lattice polytope in $N_{\mathbb R}$.
\end{Definition}

Let $\Delta$ be a smooth and ref\/lexive polytope in $M_{\mathbb R}$ and $B$ be a subset of $\Delta \cap M$ containing~$0$ and all vertices of~$\Delta$.
Let further $T$ be a coherent triangulation of $(\Delta, B)$. We assume that~$T$ is central, i.e., every maximal-dimensional simplex in $T$ has the origin~$0 \in M$ as it's vertex. Let $\lambda \colon B \to {\mathbb Z}$ be an integral vector which is in the interior of the secondary cone (see \cite[Chap\-ter~7, Def\/inition~1.4]{MR2394437}) corresponding to $T$. We consider the function $f_q$ def\/ined by
\begin{gather*}
f_q(x):=q^{\lambda(0)}-\sum_{i \in B \setminus \{0\}} q^{\lambda(i)}x^{i}.
\end{gather*}
where $q \in S_R^1:=\{ z \in {\mathbb C} \,|\, |z|=R \}$ for a suf\/f\/iciently large $R \in {\mathbb R}^{>0}$. Let $X_\Delta$ be the toric manifold whose moment polytope is~$\Delta$ and~$V_q$ be the hypersurface in $X_\Delta$ def\/ined by~$f_q$. In this setting, Zharkov constructed the monodromy transformation of $\{V_q\}_{q \in S_R^1}$ as follows:
\begin{enumerate}\itemsep=0pt
\item[(i)] Let $\mu_R \colon X_\Delta \to \Delta$ be the weighted moment map def\/ined by
\begin{gather*}
\mu_R(x):=\frac{\sum\limits_{m \in B}R^{\lambda(m)} |x^m|m}{\sum\limits_{m \in B}R^{\lambda(m)} |x^m|}.
\end{gather*}
There exists a small neighborhood $U \subset \Delta$ of the origin $0 \in \Delta$ such that $\mu_R(V_q) \subset \Delta \setminus U$ for any $q \in S_R^1$.
We set $\Delta^\circ := \Delta \setminus U$. He constructs two families of regions $\{U_\tau \}_{\tau \in \partial T}$ and $\{\widetilde{U}_\tau \}_{\tau \in \partial T}$ in $\Delta^\circ$. For instance, in the case where
\begin{gather}\label{eq:ex}
f_q:=q-\big( x+xy+y+x^{-1}+x^{-1}y^{-1}+y^{-1} \big)
\end{gather}
and the triangulation $T$ is given as shown in Fig.~\ref{fg:newdiv}, the families of regions $\{U_\tau \}_{\rho \in \partial T}$ and $\{\widetilde{U}_\tau \}_{\tau \in \partial T}$ are as shown in Figs.~\ref{fg:U} and~\ref{fg:tU}, respectively. $v_i$ and $w_i$ $(i=1, \dots, 6)$ denote vertices and edges of~$\Delta$ respectively as shown in Fig.~\ref{fg:newdiv}. $U_{v_i}$, $\tilde{U}_{v_i}$ denote the regions colored in light gray and $U_{w_i}$, $\tilde{U}_{w_i}$ denote the regions colored in dark gray as shown in Figs.~\ref{fg:U} and~\ref{fg:tU}. We omit their construction here and refer the reader to \cite[Section~3]{MR1738179} about how to construct them.

\begin{figure}[t]\centering
\includegraphics[scale=0.40]{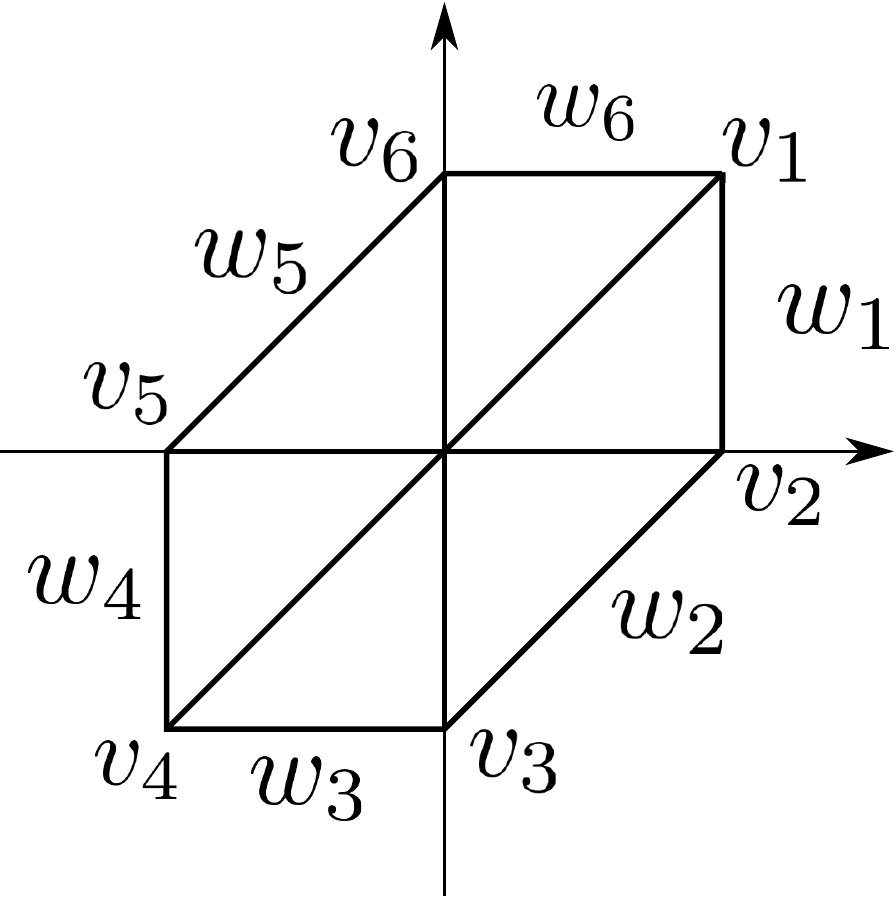}
\caption{The triangulation $T$ given by \eqref{eq:ex}.}\label{fg:newdiv}
\end{figure}

\begin{figure}[t]\centering
\begin{minipage}[t]{0.48\hsize}\centering
\includegraphics[width=45mm]{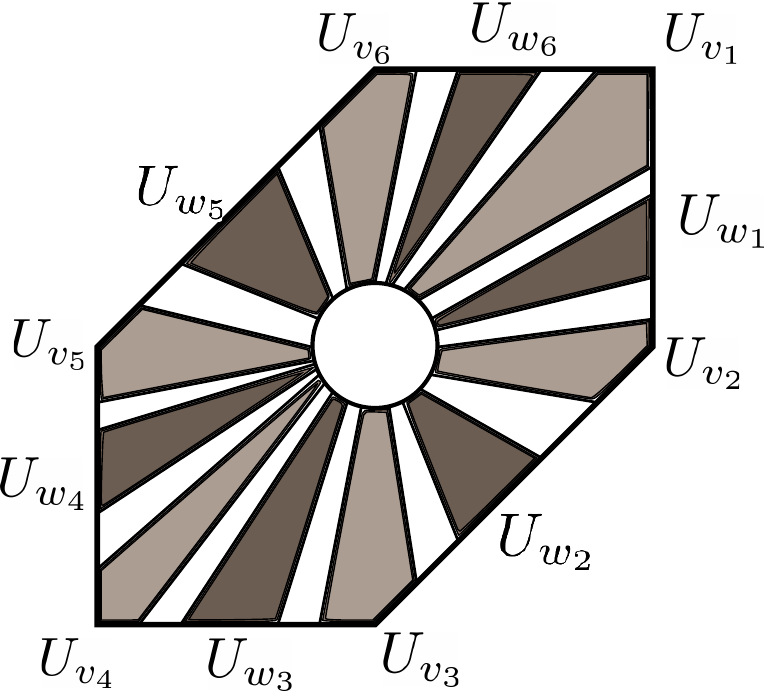}
\caption{Regions $\{U_\tau\}_{\tau \in \partial T}$.}\label{fg:U}
\end{minipage}\quad
\begin{minipage}[t]{0.48\hsize}\centering
\includegraphics[width=45mm]{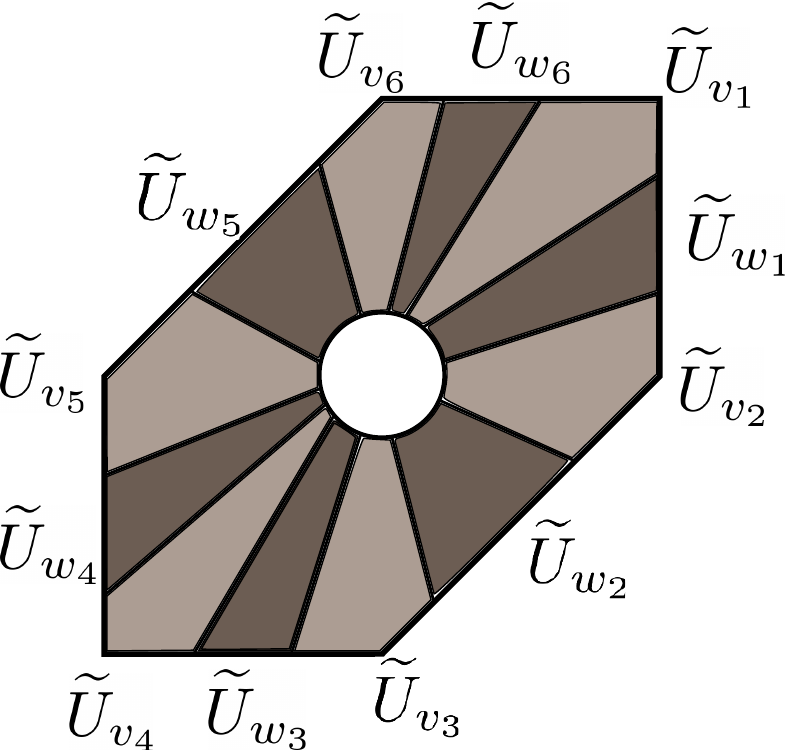}
\caption{Regions $\{\widetilde{U}_\tau\}_{\tau \in \partial T}$.}\label{fg:tU}
\end{minipage}
\end{figure}

\item[(ii)]
He sets bump functions $b_m \colon \Delta^\circ \to [0,1]\ (m \in B \setminus \{0\})$ so that the function $\tilde{f}_q \colon ({\mathbb C}^\ast)^{n+1} \to {\mathbb C}$ def\/ined by
\begin{gather*}
\tilde{f}_q(x):=q^{\lambda(0)}-\sum_{m \in B \setminus \{0\}} (b_m \circ \mu_R) (x)q^{\lambda(m)}x^{m}
\end{gather*}
coincides with
\begin{gather*}
\begin{split}
& q^{\lambda(0)}-\sum_{m \in \tau \cap B} q^{\lambda(m)}x^{m} \qquad \mathrm{on} \quad \mu_R^{-1}(U_\tau) \cap ({\mathbb C}^\ast)^{n+1}, \\
& q^{\lambda(0)}-\sum_{m \in \tau \cap B} (b_m \circ \mu_R) (x)q^{\lambda(m)}x^{m} \qquad \mathrm{on} \quad \mu_R^{-1}(\widetilde{U}_\tau) \cap ({\mathbb C}^\ast)^{n+1},
\end{split}
\end{gather*}
for any $\tau \in \partial T$. Let $W_q$ denote the submanifold in $X_\Delta$ def\/ined by $\tilde{f}_q(x)=0$. We can see from the def\/inition of the weighted moment map $\mu_R$ that if $\mu_R(x) \in \widetilde{U}_\tau$, the dominant part of $f_q$ at $x$ are $q^{\lambda(0)}-\sum\limits_{m \in \tau \cap B} q^{\lambda(m)}x^{m}$. Since orders of terms cut of\/f by bump functions $\{b_m\}_{m \in B \setminus \{0\}}$ are lower, the submanifold $W_q$ is dif\/feomorphic to $V_q$.
\item[(iii)]
He def\/ines the family of subsets $\{ \Delta_\gamma^\vee \subset N_{\mathbb R}\}_{\gamma \in [0,1]}$ by
\begin{gather*}
\Delta_\gamma^\vee := \big\{ n \in N_{\mathbb R} \,|\, {-}\langle m, n \rangle \geq \gamma (\lambda(m) - \lambda(0)) \ \mathrm{for\ any\ vertex} \ m \ \mathrm{in} \ T \big\}.
\end{gather*}
For any $\gamma >0$, the set $\Delta_\gamma^\vee$ is a convex polytope with a nonempty interior. The set $\Delta_\gamma^\vee$ in the case where $f_q$ is given by~\eqref{eq:ex} is shown in Fig.~\ref{fg:dupoly}.

\begin{figure}[t]\centering
\includegraphics[scale=0.45]{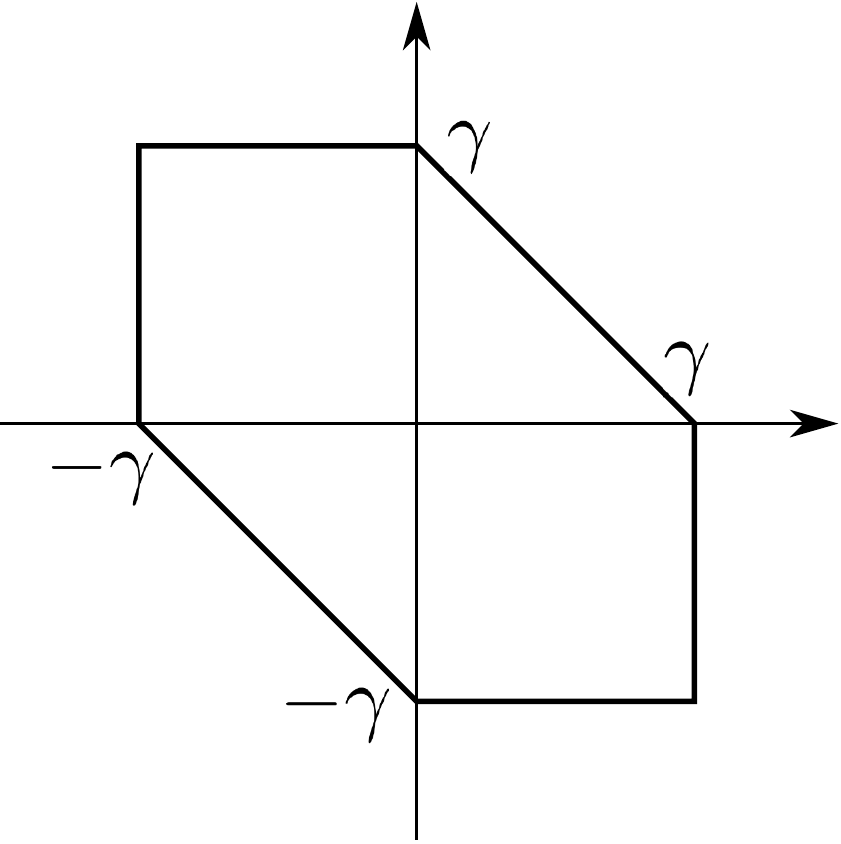}
\caption{The convex polytope $\Delta_\gamma^\vee$ in the case where $f_q$ is given by~\eqref{eq:ex}.}\label{fg:dupoly}
\end{figure}

The region surrounded by the center part of the tropical hypersurface coincides with
\begin{gather*}
\big\{ n \in N_{\mathbb R} \,|\, \lambda(0) \geq \langle m, n \rangle + \lambda(m) \ \mathrm{for\ any\ vertex} \ m \ \mathrm{in} \ T \big\}.
\end{gather*}
Hence when we set $\gamma=1$, the boundary of the convex polytope $\Delta_{\gamma=1}^\vee$ coincides with the center part of the tropical hypersurface.
For each $k$-dimensional simplex $\tau \in \partial T$, we def\/ine an $(n-k)$-dimensional face $\tau^\vee$ of $\Delta_\gamma^\vee$ by
\begin{gather*}
\tau^\vee:=\big\{ n \in \Delta_\gamma^\vee \,|\, {-}\langle n, m \rangle = \gamma (\lambda(m) - \lambda(0)) \ \mathrm{for\ any\ vertex} \ m \ \mathrm{in} \ \tau \big\}.
\end{gather*}
There is a bijective correspondence between simplices in~$\partial T$ and faces of $\Delta_\gamma^\vee$ given by $\tau \leftrightarrow \tau^\vee$.
Then he constructs a family of maps $\{v_\gamma \colon \Delta^\circ \to \partial \Delta_\gamma^\vee \}_{\gamma \in [0,1]}$ which depends on~$\gamma$ smoothly and satisf\/ies $v_\gamma(\widetilde{U}_\tau) \subset \tau^\vee$ for any $\tau \in \partial T$.

\item[(iv)]
Let $e_i:=(0,\dots,0,\overset{i}{\check{1}},0,\dots,0) \in M$ $(i=1, \dots, n+1)$ be the unit vector
and $\psi_{i,\gamma} \colon X_\Delta \to {\mathbb C}$ $(i=1, \dots, n+1)$ be the function def\/ined by
\begin{gather*}
\psi_{i,\gamma}:=\exp \big( 2 \pi \sqrt{-1} \langle (v_\gamma \circ \mu_R )(x), e_i \rangle \big).
\end{gather*}
He def\/ines a family of dif\/feomorphisms $\{ D_\gamma \colon X_\Delta \to X_\Delta \}_{\gamma \in [0,1]}$ by
\begin{gather}\label{eq:d1}
(x_{1}, \dots, x_{n+1}) \to (\psi_{1,\gamma} x_1, \dots, \psi_{n+1,\gamma} x_{n+1}).
\end{gather}
For any element $x \in W_R \cap \mu_R^{-1}(\widetilde{U}_\tau)$, we have $\mu_R(D_\gamma(x))=\mu_R(x) \in \widetilde{U}_\tau$ and
\begin{gather*}
\tilde{f}_q(D_\gamma(x)) =q^{\lambda(0)}-\sum_{m \in \tau \cap B}(b_m \circ \mu_R)(x)q^{\lambda(m)}x^m \exp \big( 2 \pi \sqrt{-1} \langle v_\gamma (\mu_R(x)), m\rangle \big) \\
\hphantom{\tilde{f}_q(D_\gamma(x))}{}
=q^{\lambda(0)}-\sum_{m \in \tau \cap B}(b_m \circ \mu_R)(x)q^{\lambda(m)}x^m \exp \big( {-}2 \pi \sqrt{-1} \gamma ( \lambda(m) - \lambda(0)) \big)\\
\hphantom{\tilde{f}_q(D_\gamma(x))}{}
 =\exp(2 \pi \sqrt{-1} \gamma \lambda(0)) \left\{ R^{\lambda(0)}-\sum_{m \in \tau \cap B}(b_m \circ \mu_R)(x)R^{\lambda(m)}{x}^m \right\}=0 .
\end{gather*}
Hence, the family of maps $\{D_\gamma \colon X_\Delta \to X_\Delta \}_{\gamma \in [0,1]}$ induces the monodromy transformation $\big\{ D_\gamma \colon W_R \to W_{q=R\exp (2 \pi \sqrt{-1} \gamma)} \big\}_{\gamma \in [0,1]}$.
\end{enumerate}

\begin{figure}[t!]\centering
\includegraphics[scale=0.3]{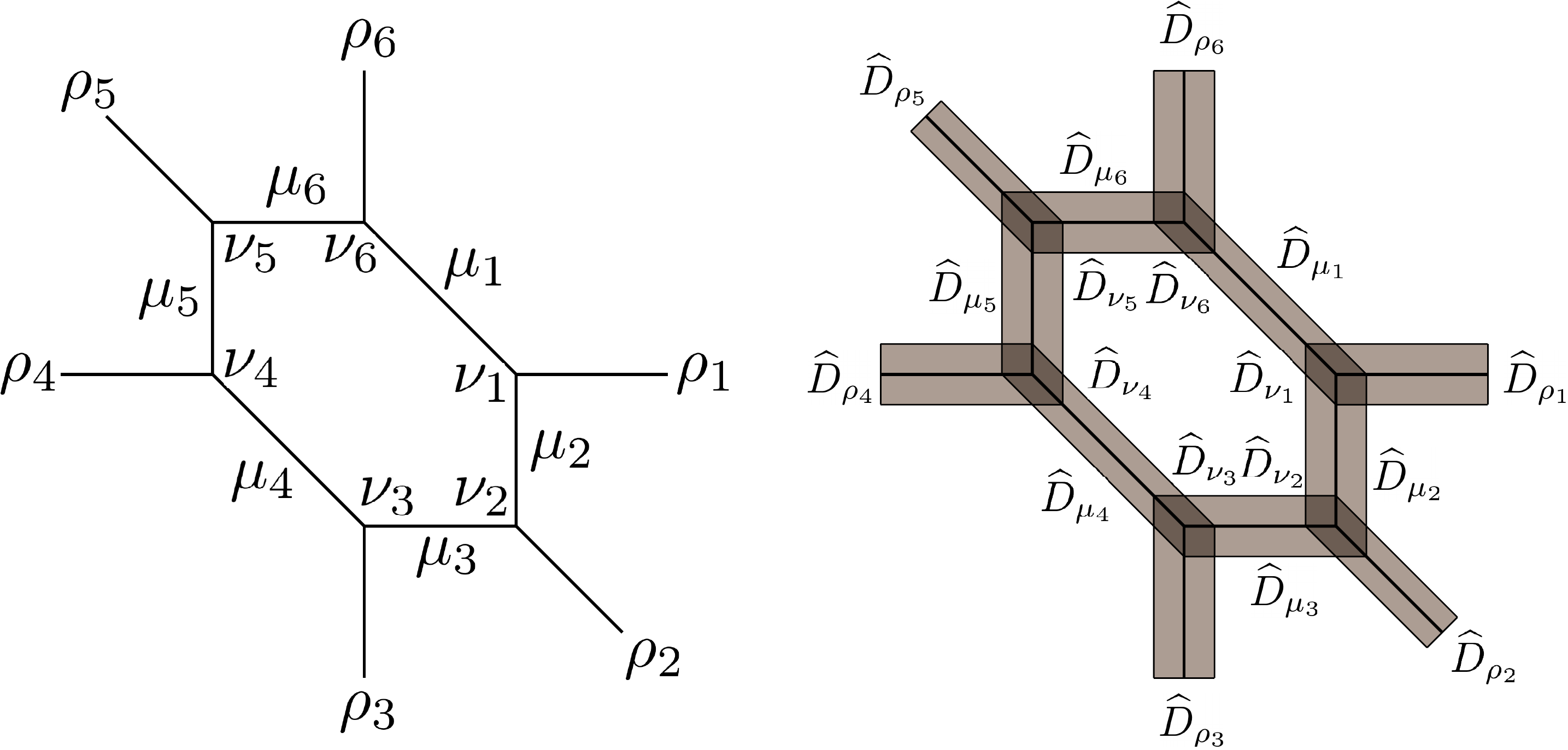}
\caption{The tropical hypersurface and $\{\widehat{D}_\mu\}_\mu$ in the case where~$f_q$ is given by~\eqref{eq:ex}.}\label{fg:our}
\end{figure}

As explained in~(ii), Zharkov also localized the hypersurface $V_q$ to construct the monodromy transformation. He used the weighted moment map while we used the tropicalization. The regions $\{\widetilde{U}_\tau\}_\tau$ are similar to $\{\widehat{D}_\mu\}_\mu$ constructed in Def\/inition~\ref{df:nbd}. Moreover, terms which we cut of\/f at each region are also the same. The tropical hypersurface and the family of regions $\{\widehat{D}_\mu\}_\mu$ are shown in Fig.~\ref{fg:our} in the case where~$f_q$ is given by~\eqref{eq:ex}. The region $\widetilde{U}_{v_i}$ corresponds to $\widehat{D}_{\mu_i}$ and $\widetilde{U}_{w_i}$ corresponds to $\widehat{D}_{\nu_i}$ $(i=1,\dots, 6)$, respectively. For instance, on both $\widetilde{U}_{v_2}$ and $\widehat{D}_{\mu_2}$, the dominant terms are $q$ and $x$. On both $\widetilde{U}_{w_1}$ and $D_{\nu_1}$, the dominant terms are~$q$, $x$, $xy$, and so on.
Note that regions at which the term $q^{\lambda(0)}$ is not dominant in our construction are included in other regions in Zharkov's construction. For instance, in the case~$f_q$ is given by~\eqref{eq:ex}, the region corresponding to $\widehat{D}_{\rho_i}$ is included in $\widetilde{U}_{w_i}$ for $i=1,\dots,6$.
This is the only major dif\/ferences in the localization and the resulting manifolds~$W_q$ are similar to each other.

His construction of the monodromy transformation is also similar to ours.
We can construct the family of maps $\{ v_\gamma \colon \Delta^\circ \to \partial \Delta_\gamma^\vee \}_{\gamma \in [0,1]}$ as follows.
First, we construct $v_{\gamma=1}$ satisfying $v_\gamma(\widetilde{U}_\rho) \subset \rho^\vee$ for any $\rho \in \partial T$. We set
\begin{gather*}
S_\gamma \colon \ M_{{\mathbb R}} \to M_{{\mathbb R}}, \qquad (X_1, \dots, X_{n+1}) \to (\gamma X_1, \dots, \gamma X_{n+1}).
\end{gather*}
for each $\gamma \in [0,1]$. Then the map $v_\gamma:=S_\gamma \circ v_{\gamma=1}$ satisf\/ies requested conditions. The map $\phi \colon \operatorname{Log}_R(W_R) \to V(\operatorname{trop}(F))$ in Proposition~\ref{pr:phi} plays the same role as~$v_{\gamma=1}$. Moreover, the monodromy transformation given by~\eqref{eq:c4} in our construction coincides with~\eqref{eq:d1}. It can be said that our construction is a natural generalization of Zharkov's construction.

\subsection*{Acknowledgements}
The author would like to express his gratitude to Kazushi Ueda for encouragement and helpful advices. The author thanks to Tatsuki Kuwagaki for explaining the context of the paper~\cite{DKK}. The author also thanks the anonymous referees for reading this paper carefully and giving many helpful comments. This research is supported by the Program for Leading Graduate Schools, MEXT, Japan.

\pdfbookmark[1]{References}{ref}
\LastPageEnding


\begin{thebibliography}{99}
\footnotesize\itemsep=0pt

\bibitem{MR2240909}
Abouzaid M., Homogeneous coordinate rings and mirror symmetry for toric
 varieties, \href{http://dx.doi.org/10.2140/gt.2006.10.1097}{\textit{Geom. Topol.}} \textbf{10} (2006), 1097--1157,
 \href{http://arxiv.org/abs/math.SG/0511644}{math.SG/0511644}.

\bibitem{DKK}
Diemer C., Katzarkov L., Kerr G., Symplectomorphism group relations and
 degenerations of {L}andau--{G}inzburg models, \href{http://arxiv.org/abs/1204.2233}{arXiv:1204.2233}.

\bibitem{MR2394437}
Gelfand I.M., Kapranov M.M., Zelevinsky A.V., Discriminants, resultants and
 multidimensional determinants, \textit{Modern Birkh\"auser Classics}, Birkh\"auser
 Boston, Inc., Boston, MA, 2008.

\bibitem{IwaoLecture2010}
Iwao S., Complex integration vs tropical integration, {L}ecture at The
 {M}athematical {S}ociety of {J}apan {A}utum {M}eeting, 2010, available at \url{http://mathsoc.jp/videos/2010shuuki.html}.

\bibitem{KajiwaraPre}
Kajiwara T., Tropical toric varieties, {P}reprint, Tohoku University, 2007.

\bibitem{MR2428356}
Kajiwara T., Tropical toric geometry, in Toric Topology, \href{http://dx.doi.org/10.1090/conm/460/09018}{\textit{Contemp.
 Math.}}, Vol.~460, Amer. Math. Soc., Providence, RI, 2008, 197--207.

\bibitem{MR3287221}
Maclagan D., Sturmfels B., Introduction to tropical geometry, \textit{Graduate
 Studies in Mathematics}, Vol.~161, Amer. Math. Soc., Providence, RI, 2015.

\bibitem{MR2079993}
Mikhalkin G., Decomposition into pairs-of-pants for complex algebraic
 hypersurfaces, \href{http://dx.doi.org/10.1016/j.top.2003.11.006}{\textit{Topology}} \textbf{43} (2004), 1035--1065,
 \href{http://arxiv.org/abs/math.GT/0205011}{math.GT/0205011}.

\bibitem{ISSN:1401-5617}
Rullg{\aa}rd H., Polynomial amoebas and convexity, {P}reprint, Stockholm
 University, 2001, available at \url{http://www2.math.su.se/reports/2001/8/2001-8.pdf}.

\bibitem{MR1738179}
Zharkov I., Torus f\/ibrations of {C}alabi--{Y}au hypersurfaces in toric
 varieties, \href{http://dx.doi.org/10.1215/S0012-7094-00-10124-X}{\textit{Duke Math.~J.}} \textbf{101} (2000), 237--257,
 \href{http://arxiv.org/abs/math.AG/9806091}{math.AG/9806091}.

\end{thebibliography}
\end{document}